\theoremstyle{definition}
\newtheorem{definition}{Definition}
\newtheorem{question}[definition]{Question}
\newtheorem{fact}[definition]{Fact}
\theoremstyle{plain}
\newtheorem{theorem}[definition]{Theorem}
\newtheorem{proposition}[definition]{Proposition}
\newtheorem{lemma}[definition]{Lemma}
\newtheorem{corollary}[definition]{Corollary}
\newtheorem{remark}[definition]{Remark}
\DeclareMathOperator{\fin}{fin}
\newcommand{\fr}{{}^\frown}
\newcommand{\name}{\dot}
\newcommand{\forces}{\Vdash}
\newcommand{\la}{\langle}
\newcommand{\ra}{\rangle}
\newcommand{\uhr}{\restriction}
\newcommand{\power}{\mathcal{P}}
\newcommand{\ol}{\ol}
\newcommand{\po}{\mathbb{P}}
\newcommand{\qo}{\mathbb{Q}}
\DeclareMathOperator{\dom}{dom}
\DeclareMathOperator{\supp}{supp}
\DeclareMathOperator{\suc}{succ}
\newcommand{\ka}{\kappa}
\renewcommand{\power}{\mathcal{P}}
\newcommand{\ptimes}{\ltimes}
\title{On the strength of ultrafilters above choiceless large cardinals and their Prikry forcings}
\author{ 
William Adkisson
\and
Omer Ben Neria\footnote{The second author was partially supported by the Israel Science Foundation (Grant 1302/23).}
}
\date{\today}
\begin{document}

\maketitle

\begin{abstract}
    We study the strength of well-founded ultrafilters on ordinals above choiceless large cardinals and their associated Prikry forcings. Gabriel Goldberg showed that all but boundedly many regular cardinals above a rank Berkeley cardinal carry well-founded uniform ultrafilters. We prove several bounds on the large cardinal strength that is witnessed by such ultrafilters. 
    
    We then extend the theory of Prikry forcing in this context and place limits on the cardinals that can be collapsed or singularized. 
    Finally, we develop the notion of a tensor Prikry system, and use it to give new constructions for several consistency results in choiceless set theory. In particular, we build a new model in which all uncountable cardinals are singular.
\end{abstract}

\section{Introduction}

Large cardinals have proven to be powerful extensions of the axioms of set theory. These large cardinal axioms are often phrased in terms of the existence of elementary embeddings from $V$ to a structure $M$; the strength of the axiom is closely related to the closure of $M$, with embeddings into highly closed models giving rise to very powerful large cardinal properties. However, pushing their strength to its limits conflicts with the axiom of choice: Kunen famously showed that the existence of an elementary embedding from $V$ to itself is incompatible with choice. Therefore, to explore even stronger large cardinal assumptions, one must leave the familiar setting of ZFC and enter the realm of choiceless set theory. While this shift allows for more powerful large cardinals, much of the useful structure provided by ZFC is lost without the axiom of choice.

To overcome this loss, Woodin \cite{Woodin1} has proposed using large cardinal hypotheses such as extendible and supercompact cardinals to derive some of the key consequences of AC. 
This line of research has seen tremendous developments in recent years, following works by Cutolo \cite{Cutolo} proving key choice-like properties at successors of limits of Berkeley cardinals,  Usuba \cite{Usuba1},  who derived introduced the key notion of a Lowenheim-Skolem cardinal as a weak
choice principle, and Aspero \cite{Aspero} who showed how to derive such large cardinals from the existence of a Reinhardt cardinal. 
Schlutzenberg \cite{Schlutzenberg} developed the theory of extenders in these settings and established structural results on elementary embeddings associated with choiceless large cardinals. 
This line of research has notably culminated in the recent work of Goldberg \cite{goldberg:measurablecardinals,goldberg:choicelesscardinals}, who unified and significantly expended these results to derive remarkable choice-related consequences for a proper class of cardinals, and then further develop the theory to shed light on the Continuum Problem. For the most part, the choiceless large cardinal being used to derive these results is a rank Berkeley, whose existence is implied by Reinhardt cardinals but is expressible in first-order set theory. 

A fundamental result in Goldberg's work is the existence of well founded uniform ultrafilters on all sufficiently large regular cardinals. 
    
    \begin{theorem}[Goldberg \cite{goldberg:choicelesscardinals}]\label{THM:UFonregulars}
    Assuming there is a rank Berkeley cardinal $\lambda$ there is some $\kappa \geq \lambda$ (which is almost supercompact) such that for every regular cardinal $\delta  \geq \kappa$ there is a $\kappa$-complete uniform ultrafilter $U$ on $\delta$.
    \end{theorem}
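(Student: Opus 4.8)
The plan is to reduce the theorem to producing a single cardinal $\kappa\ge\lambda$ that is almost supercompact, where for present purposes it is enough to read this as: for every ordinal $\eta$ there is an elementary embedding $j\colon V_\alpha\to V_\alpha$ (for some necessarily large $\alpha$) with $\crit(j)=\kappa$ and $j(\kappa)>\eta$. Granting such a $\kappa$, I would dispose first of the soft half, the passage from $\kappa$ to a uniform ultrafilter on a given regular $\delta\ge\kappa$, by the usual seed construction. Fix $j\colon V_\alpha\to V_\alpha$ with $\crit(j)=\kappa$ and $j(\kappa)>\delta$; one may take $\alpha>\delta^+$ and a fixed point of $j$, by restricting a larger such embedding to one of its fixed points. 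Then $j(\delta)>\delta$, and $j{\restriction}\delta$ is a set of rank below $j(\alpha)=\alpha$, hence an element of $V_\alpha$; so if $\sup j[\delta]$ were $j(\delta)$, then $V_\alpha$ would witness a cofinal map $\delta\to j(\delta)$, contradicting that $V_\alpha$ (correct about $\delta$ since $\alpha>\delta^+$) computes $j(\delta)$ to be regular. Hence $s:=\sup j[\delta]<j(\delta)$, and $U:=\{X\subseteq\delta:s\in j(X)\}$ is an ultrafilter on $\delta$; it is $\kappa$-complete because $j$ commutes with intersections of length $<\kappa=\crit(j)$, and uniform because $j(\beta)\le s<j(\delta)$ puts the tail $[\beta,\delta)$ in $U$ for each $\beta<\delta$. (Equivalently, take a fine $\kappa$-complete ultrafilter on $P_\kappa(\delta)$ and push it forward along $\sigma\mapsto\sup\sigma$, which lands in $\delta$ since $|\sigma|<\kappa\le\cf(\delta)$.)

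For the cardinal $\kappa$ I would mine the rank Berkeley cardinal $\lambda$ for self-embeddings of rank initial segments. The first observation is that, for each large $\alpha$, the critical points of elementary embeddings $V_\alpha\to V_\alpha$ form an $\omega$-club subset of $\lambda$: unboundedness is the hypothesis, while closure under $\omega$-suprema follows by composing an increasing $\omega$-chain of such embeddings and passing to the supremum of the common critical sequence. The next and decisive point is to push a witness up to and past $\lambda$ using elementarity on $\lambda$ itself: an embedding $j\colon V_\alpha\to V_\alpha$ that moves $\lambda$ transports the rank-Berkeley property of $\lambda$ to $j(\lambda)>\lambda$, so $V_\alpha$—hence $V$—carries self-embeddings of rank segments with critical points cofinal in $j(\lambda)$, in particular critical points in $[\lambda,j(\lambda))$. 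Iterating this, organizing the resulting tower of embeddings into a direct limit (using Schlutzenberg's analysis of choiceless extenders to keep the limit well-founded), together with a reflection argument to single out one value, I would extract a single $\kappa\ge\lambda$ that is the critical point of embeddings $V_\alpha\to V_\alpha$ with $j(\kappa)$ cofinal in the ordinals — the almost supercompact cardinal above $\lambda$ that the theorem asks for.

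The main obstacle is concentrated entirely in this second stage: first, ensuring that critical points can be driven all the way up to $\lambda$ rather than staying cofinally below it, which is where the finer Berkeley-type reflection a rank Berkeley cardinal enjoys, together with the extender analysis, has to be used; and second, arranging that one and the same $\kappa\ge\lambda$ serves for every regular $\delta$ instead of a $\delta$-dependent witness. Without the axiom of choice one has neither internal characterizations of supercompactness nor \L o\'s's theorem for ultrapowers of $V$, so the whole argument must run through honest set-sized embeddings $V_\alpha\to V_\alpha$ and their direct limits, and the well-foundedness of each derived ultrapower has to be checked directly — which is exactly what the well-foundedness of the $V_\alpha$ and the freedom to take $\alpha$ a fixed point of $j$ supply. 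By contrast the seed-and-pushforward half, and the verification that the critical points form an $\omega$-club, are routine.
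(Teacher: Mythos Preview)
The paper does not prove this theorem; it is quoted from Goldberg \cite{goldberg:choicelesscardinals} as background, and elsewhere the paper invokes it via \cite[Theorem 3.3]{goldberg:measurablecardinals}. So there is no ``paper's own proof'' to compare against.

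That said, your proposal has a genuine gap rooted in a definitional mismatch. You read ``$\kappa$ is almost supercompact'' as: $\kappa$ is the critical point of embeddings $j\colon V_\alpha\to V_\alpha$ with $j(\kappa)$ arbitrarily large. That is not the notion used here. In the paper (and in Goldberg's work), $\kappa$ is almost supercompact if for every $\eta>\kappa$ and $\alpha<\kappa$ there exist $\bar\eta<\kappa$ and an elementary $\pi\colon V_{\bar\eta}\to V_\eta$ with $\pi(\alpha)=\alpha$. These are incoming maps from small ranks, not self-maps of a large $V_\alpha$ with critical point $\kappa$. Your target property is vastly stronger (closer to a super-Reinhardt-type hypothesis at $\kappa$), and there is no reason to expect it to follow from a rank Berkeley cardinal below.

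Concretely, your ``push the critical point past $\lambda$'' step does not go through. Rank Berkeley gives embeddings $j\colon V_\eta\to V_\eta$ with $\crit(j)<\lambda$; it does not promise any $j$ with $j(\lambda)>\lambda$, and if $\lambda$ is the least rank Berkeley then for sufficiently closed $\eta$ it is definable in $V_\eta$ and hence fixed by every such $j$. So elementarity will not transport rank-Berkeleyness strictly above $\lambda$, and the direct-limit/reflection sketch that follows has no input to work with. Your seed argument in the ``soft half'' is fine \emph{given} an embedding with $\crit(j)=\kappa\ge\lambda$ and $j(\kappa)>\delta$, but that hypothesis is exactly what is not available. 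Goldberg's actual route obtains the almost supercompact $\kappa$ from the rank Berkeley structure (indeed a proper class of them; see the paper's Preliminaries) and then derives the $\kappa$-complete uniform ultrafilter on a regular $\delta\ge\kappa$ from the almost-supercompact reflection maps, not from a self-embedding with critical point $\kappa$.
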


    It is natural to ask about the strength of ultrafilters in these choiceless settings (above a rank Berkeley). In the ZFC setting, ultrafilters are often derived from large cardinals, and the strength of the associated large cardinal determines much of the behavior of these ultrafilters. A normal measure derived from a measurable cardinal behaves very differently than the array of measures that can be derived from strongly compact or supercompact cardinals. 
    In this work, we aim to study the well-founded ultrafilters $U$ on ordinals above a rank Berkeley cardinal through the following two perspectives:
    \begin{itemize}
        \item[(I)] Study properties of $U$ that determine its (large cardinal) strength.

        \item[(II)] Study the theory of Prikry forcing $\po(U)$ with $U$, and iteration of such forcings. 
    \end{itemize}

    There are several ways of making perspective (I) precise. The first is to examine the cardinality of restricted powers. Given an ordinal $\gamma$ and a well-founded ultrafilter $U$ on some set $X$, we can examine $|\gamma^X/U|$. Under choice, suppose $\kappa$ is measurable, with a corresponding normal measure $U$ on $\kappa$, and let $\delta > \gamma$. Then if $\gamma < \kappa$, or if there are $<\delta$ functions from $\gamma$ to $\gamma$, then $|\gamma^\kappa/U| < \delta$. On the other hand, if $U$ is a $\ka$-strongly compact-type ultrafilter on $\delta$, and $\kappa \leq \gamma < \delta$, then $|\gamma^\delta/U| > \delta$.

    Another approach is to use the notion of a $(\kappa,\rho)$ regular ultrafilter.
    An ultrafilter $U$ is $(\kappa,\rho)$ regular for $\ka < \rho$ if it is $\kappa$-complete and has a subset of size $\rho$ all whose intersections indexed by elements of $[\rho]^{\kappa}$ are empty (see Definition \ref{Def:RegularUltrafilters}). In the choice setting, the existence of an ultrafilter $U$ on that is $(\kappa,\rho)$-regular reflects the strength of a $\power_\kappa(\rho)$ strongly compact cardinal.

    Regarding (II), we can ask about the result of forcing with the associated Prikry forcing $\po(U)$. In the ZFC setting, if $U$ is a measure derived from a measurable cardinal, Prikry forcing will not collapse cardinals. On the other hand, if $U$ is derived from a $\kappa$-strongly compact measure on $\delta$ then Prikry forcing with $U$ will collapse all cardinals between $\kappa$ and $\delta$.

    This perspective is closely connected to an independently interesting question: Can choiceless choiceless ultrafilters be used to construct a model where all cardinals are singular?
    Gitik \cite{gitik:AllCardSing} famously constructed this model from the assumption of a class of strongly compact cardinals. 
    Apter \cite{apter:ADsingularcardinals} harnessed the power of AD to give a more direct construction of a model where all cardinals below $\Theta$ are singular, using the result of Steel (see \cite{Steel-HBchapter}) that under AD, all regular cardinals in $L(\mathbb{R})$ below $\Theta$ are measurable.   If $U$ behaves like a simple measurable, then one can try to extend Apter's methods to singularize each $\delta$ without collapsing it. If $U$ behaves more like a strongly compact measure and will collapse many cardinals below $\delta$, then the existence of these ultrafilters would not give us any advantage over Gitik's original construction.\\

    \noindent
    In this work we address these questions. Our main results stem from an analysis of well-founded ultrafilters on ordinals above a rank Berkeley, done in Section \ref{SEC:Choiceless UFs}, whose main finding is:
    \begin{theorem}\label{THM:Main1}
    For every $\gamma$ above a rank Berkeley cardinal $\lambda$, and every well-founded ultrafilter $U$ on a well-ordered set $X$,
    there is a surjection from ${}^\gamma \gamma$ onto $j_U(\gamma) := otp(\gamma^X/U)$.
    \end{theorem}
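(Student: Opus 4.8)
The plan is to reduce the statement to a claim about an ultrafilter on an ordinal and then use the rank Berkeley hypothesis to ``compress'' the domain of that ultrafilter down to $\gamma$.

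\emph{Reductions.} Since $X$ is well-ordered I fix a bijection with a cardinal $\delta$ and transport $U$ along it; this alters neither $j_U(\gamma)$ nor the existence of the asserted surjection, and discarding a set not in $U$ I may further take $U$ uniform on $\delta$. Then $f\mapsto [f]_U$ is a surjection of ${}^{\delta}\gamma$ onto the ultrapower $\gamma^{\delta}/U$, which, as $U$ is well-founded and hence countably complete so that $\mathrm{Ult}(V,U)$ is transitive, is linearly ordered by $<_U$ in order type the ordinal $j_U(\gamma)$; composing with the transitive collapse it suffices to produce a surjection of ${}^{\gamma}\gamma$ onto $\gamma^{\delta}/U$. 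If $\delta\le\gamma$ this is immediate: padding by the identity on $\gamma\setminus\delta$ injects ${}^{\delta}\gamma$ into ${}^{\gamma}\gamma$, and in $\mathsf{ZF}$ any injection of a nonempty set into ${}^{\gamma}\gamma$ yields a surjection of ${}^{\gamma}\gamma$ back onto it. So assume $\delta>\gamma\ge\lambda$; this is where the rank Berkeley cardinal enters.

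\emph{The core.} Above a rank Berkeley cardinal there are abundantly many elementary self-embeddings $e\colon V_{\theta}\to V_{\theta}$ with $\crit(e)<\lambda$, together with the downward reflection they entail (in particular a L\"owenheim--Skolem cardinal below $\lambda$), and my plan is to use these to see that, as far as its action on the ordinal $\gamma$ is concerned, $U$ is no richer than an ultrafilter already coded below $\gamma$. There are two complementary devices. First, reflection: for a large $\theta>j_U(\gamma)$ with $V_{\theta}$ correctly computing $\gamma^{\delta}/U$, and for each $f\colon\delta\to\gamma$, one finds $\rho<\lambda$ and an elementary $h\colon V_{\rho}\to V_{\theta}$ with $(U,\delta,\gamma,f)\in\mathrm{ran}(h)$; writing $(\bar U,\bar\delta,\bar\gamma,\bar f)=h^{-1}(U,\delta,\gamma,f)$, elementarity makes $\bar U$ a well-founded ultrafilter on $\bar\delta<\lambda$ and $\bar f\colon\bar\delta\to\bar\gamma$, and $h$ carries $[\bar f]_{\bar U}<\lambda$ to $[f]_U$ and $\mathrm{otp}(\bar\gamma^{\bar\delta}/\bar U)<\lambda$ to $j_U(\gamma)$ --- so every ordinal below $j_U(\gamma)$ is the $h$-image of an ordinal below $\lambda$, for one of these reflecting embeddings. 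Second, realization via a seed: one realizes $U$ as the ultrafilter derived from some such $e\colon V_{\theta}\to V_{\theta}$ and a seed $s<e(\delta)$, and then uses the factor $k\colon\mathrm{Ult}(V,U)\to V_{\theta}$ with $k\circ j_U=e\uhr V_{\theta}$ and $k([\mathrm{id}_{\delta}]_U)=s$; since $e$ sees $\gamma$ only through its action, which is governed by $\crit(e)<\lambda\le\gamma$ (so $e$ fixes everything below $\crit(e)$ and, for the least rank Berkeley, $\lambda$ and a long stretch above it), one argues that $j_U(\gamma)=k(j_U(\gamma))$, equivalently $e(\gamma)$ read back below $\crit(k)$, is controlled by bounded data, from which a surjection from ${}^{\gamma}\gamma$ can be read off. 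On either route one must then assemble the ordinal-valued data into a single surjection on ${}^{\gamma}\gamma$, using that the small objects involved --- ultrafilters on ordinals below $\lambda$, functions between ordinals below $\gamma$, the attached embeddings, ordinals below $\lambda$ --- can be parametrized in a wellordered way by ${}^{\gamma}\gamma$, itself a consequence of the hypothesis (via choice-like consequences low in $V$ in the spirit of Woodin and Usuba).

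\emph{The obstacle.} This compression/assembly is where the work lies, and the subtlety is that $\delta$ may dwarf $\gamma$ while $\gamma$, being $\ge\lambda$, may be a fixed point of \emph{every} self-embedding supplied by the hypothesis, so one cannot naively ``stretch $\gamma$ past $\delta$.'' The surjection must instead be built so that each $g\in{}^{\gamma}\gamma$ codes a piece of bounded data together with just enough of a reflecting (or factoring) embedding to recover its associated value below $j_U(\gamma)$, and so that every such value is attained; pinning down the embedding attached to a given $f$ canonically or within a wellordered family, bounding how many embeddings are needed, and verifying the requisite wellordered parametrization of the small objects by ${}^{\gamma}\gamma$ (which in turn leans on choice-like consequences of the rank Berkeley hypothesis low in $V$) is the delicate heart of the argument. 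Granting it, composing with the order isomorphism $\gamma^{\delta}/U\cong j_U(\gamma)$ completes the proof.
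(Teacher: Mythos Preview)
Your proposal does not contain a proof; it sketches two general strategies and then concedes (``Granting it\ldots'') that the crucial step --- assembling a well-ordered parametrization of the relevant reflecting or factoring data by ${}^{\gamma}\gamma$ --- is missing. Neither route you describe supplies the mechanism that actually makes this work.

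The paper's argument hinges on an idea absent from your writeup. Define $f\sim^{weak}_U g$ on ${}^{X}\gamma$ iff $g =_U t\circ f$ for some injective $t$. This is coarser than $=_U$, and one checks that $\gamma^X/\sim^{weak}_U$ is well-orderable and that there is a surjection
\[
\Phi\colon\bigl(\gamma^X/\sim^{weak}_U\bigr)\times{}^{\gamma}\gamma\twoheadrightarrow\gamma^X/U,\qquad (C,t)\mapsto [t\circ f_C]_U,
\]
where $f_C$ represents the $=_U$-least class inside $C$. The point is then to bound $\bar\lambda:=\mathrm{otp}(\gamma^X/\sim^{weak}_U)$ below $\lambda$. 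Here one invokes a theorem of Goldberg (not mentioned in your proposal): for $U$ on an ordinal $\delta$ there is an elementary $j\colon V_\eta\to V_\eta$ with $\crit(j)<\lambda$, $j(U)=U$, $j(\gamma)=\gamma$, and $\mathrm{Fix}(j)\cap\delta\in U$. For any $f\colon\delta\to\gamma$ and $x\in\mathrm{Fix}(j)\cap\delta$ one has $j(f)(x)=j(f(x))=(j\uhr\gamma)\circ f(x)$, so $j(f)\sim^{weak}_U f$ via the injection $t=j\uhr\gamma$. Thus $j$ fixes every $\sim^{weak}_U$-class, forcing $\bar\lambda<\crit(j)<\lambda\le\gamma$, and $\Phi$ becomes a surjection from $\bar\lambda\times{}^{\gamma}\gamma$ (hence from ${}^{\gamma}\gamma$) onto $\gamma^X/U$.

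Two smaller issues: in $\mathsf{ZF}$, ``well-founded'' for $U$ does \emph{not} imply countable completeness, and the full ultrapower $\mathrm{Ult}(V,U)$ need not satisfy \L o\'s or be identifiable with a transitive class; the paper works only with the well-ordered set $\gamma^X/U$ and its order type. Also, your reflection idea reflects one $f$ at a time and then needs a choice of reflecting embedding per $f$; without a canonical or well-orderable family of such embeddings (which you do not provide), this does not assemble into a single surjection.
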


    \noindent
    The theorem extends Proposition 7.7 in \cite{goldberg:choicelesscardinals}, which uses definability properties to show that for an even ordinal $\epsilon$, if $U$ is a wellfounded ultrafilter on an ordinal $\eta < \theta_{\epsilon+2}$ then $j_U(\eta) = otp(\eta^\eta/U) < \theta_{\epsilon+2}$.\\

    \noindent
    As an immediate Corollary of Theorem \ref{THM:Main1}, we get:
    
    \begin{corollary}
        For every strong limit cardinal $\rho > \lambda$, if $\gamma < \rho$ and $U$ is a well-founded ultrafilter on a well-ordered set $X$ then 
        $|\gamma^X/U| < \rho$. 
    \end{corollary}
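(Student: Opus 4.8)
The plan is to read off the Corollary from Theorem~\ref{THM:Main1} together with the defining property of a strong limit cardinal, after a harmless normalization of $\gamma$.

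First I would reduce to the case where $\gamma$ is a cardinal strictly above $\lambda$. Since $\rho$ is a limit cardinal (being strong limit) and $\gamma,\lambda<\rho$, choose a cardinal $\gamma^{*}$ with $\gamma,\lambda<\gamma^{*}<\rho$. A function $X\to\gamma$ is a function $X\to\gamma^{*}$, and the relation $=_{U}$ is the same in either ambient set, so the identity on functions induces a well-defined injection $\gamma^{X}/U\hookrightarrow(\gamma^{*})^{X}/U$; hence $|\gamma^{X}/U|\le|(\gamma^{*})^{X}/U|$, and it suffices to bound the latter. Replacing $\gamma$ by $\gamma^{*}$, I may assume $\gamma$ is a cardinal with $\lambda<\gamma<\rho$.

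Now Theorem~\ref{THM:Main1} applies to $\gamma$ and yields a surjection from ${}^{\gamma}\gamma$ onto $j_{U}(\gamma)=otp(\gamma^{X}/U)$. Since $\gamma$ is an infinite (well-ordered) cardinal, $|\gamma\times\gamma|=\gamma$, so ${}^{\gamma}\gamma$ injects into $\mathcal{P}(\gamma\times\gamma)$ and hence into $\mathcal{P}(\gamma)$; composing the surjection with the inverse of this injection, and sending every set outside the image to $0$, produces a surjection $\mathcal{P}(\gamma)\twoheadrightarrow j_{U}(\gamma)$. Thus $j_{U}(\gamma)$ is an ordinal that is a surjective image of $\mathcal{P}(\gamma)$. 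By the definition of a strong limit cardinal, since $\gamma<\rho$ no ordinal $\ge\rho$ is a surjective image of $\mathcal{P}(\gamma)$, so $j_{U}(\gamma)<\rho$. Since $U$ is well-founded, $\gamma^{X}/U$ is well-ordered of order type $j_{U}(\gamma)$, and as $\rho$ is a cardinal we conclude $|\gamma^{X}/U|=|j_{U}(\gamma)|\le j_{U}(\gamma)<\rho$.

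I do not anticipate any serious obstacle here; this really is an immediate consequence of Theorem~\ref{THM:Main1}. The only points requiring a little care are (i) the reduction to $\gamma>\lambda$, so that the theorem is applicable, and (ii) matching the conclusion of the theorem (a surjection from ${}^{\gamma}\gamma$, i.e.\ essentially from $\mathcal{P}(\gamma)$) with whatever precise form of ``strong limit'' is in force. If one instead takes ``strong limit'' to mean that $\mathcal{P}(\gamma)$ is well-orderable of cardinality $<\rho$ for all $\gamma<\rho$, the same argument goes through a fortiori, since a surjective image of a well-orderable set is well-orderable of no greater cardinality. All the cardinal-arithmetic manipulations used ($|\gamma\times\gamma|=\gamma$, the injection ${}^{\gamma}\gamma\hookrightarrow\mathcal{P}(\gamma)$, etc.) are valid in $\mathrm{ZF}$ because $\gamma$ is a well-ordered cardinal.
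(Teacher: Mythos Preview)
Your proof is correct and is exactly the intended immediate derivation from Theorem~\ref{THM:Main1}; the paper states the corollary without proof, and your argument (reduce to a cardinal $\gamma>\lambda$, apply the theorem, then use that ${}^{\gamma}\gamma$ surjects onto no ordinal $\geq\rho$ by the strong limit hypothesis) is the natural way to fill it in. The only minor remark is that with the paper's definition of strong limit ($\Theta(V_{\gamma+1})<\rho$), one can skip the detour through $\mathcal{P}(\gamma)$ and simply note ${}^{\gamma}\gamma\subseteq V_{\gamma+1}$, but your version is equally valid.
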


    These results provide an answer to the first question we asked: in the sense of restricted powers, ultrafilters above a rank Berkeley must behave more like ultrafilters coming from a measurable, rather than ultrafilters derived from a strongly compact or supercompact cardinal. We can use this to answer our second question:

    \begin{theorem}\label{THM:Main2}
    For all cardinals $\kappa < \rho$ above a rank Berkeley cardinal $\lambda$, if there is no surjection from  $\kappa^\kappa$ onto $\rho$ then there is no $(\kappa,\rho)$-regular ultrafilter on a well-ordered set. 
    \end{theorem}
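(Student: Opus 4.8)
The strategy is to prove the contrapositive: assume there is a $(\kappa,\rho)$-regular ultrafilter $U$ on a well-ordered set $X$, and derive a surjection from $\kappa^\kappa$ onto $\rho$. Since $\kappa > \lambda$ for a rank Berkeley cardinal $\lambda$, Theorem~\ref{THM:Main1} applies: for every $\gamma$ there is a surjection from ${}^\gamma\gamma$ onto $j_U(\gamma) = \mathrm{otp}(\gamma^X/U)$. The plan is to find some $\gamma \le \kappa$ such that $j_U(\gamma)$, or rather a suitable quotient of it, surjects onto $\rho$, and then compose with the Theorem~\ref{THM:Main1} surjection (noting that ${}^\gamma\gamma$ injects into ${}^\kappa\kappa$ when $\gamma \le \kappa$, so a surjection from ${}^\gamma\gamma$ lifts to one from $\kappa^\kappa$).

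First I would unpack $(\kappa,\rho)$-regularity: there is a family $\langle A_\alpha : \alpha < \rho\rangle$ of sets in $U$ such that $\bigcap_{\alpha \in s} A_\alpha = \emptyset$ for every $s \in [\rho]^\kappa$. Equivalently, the function $f : X \to [\rho]^{<\kappa}$ defined by $f(x) = \{\alpha < \rho : x \in A_\alpha\}$ has the property that for each $\alpha < \rho$, the set $\{x : \alpha \in f(x)\} = A_\alpha \in U$. This $f$ represents an element of $[\rho]^{<\kappa}$ as computed in the ultrapower — call it $b = [f]_U$ — and by $\kappa$-completeness, $b$ really is (externally) a set of ordinals below $j_U(\rho)$ of size bounded by $j_U(\kappa)$-internally-$<\kappa$, with the key feature that $j_U(\alpha) \in b$ for every $\alpha < \rho$. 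This is the usual way a regular ultrafilter yields a ``covering'' set in the ultrapower. The crucial point is then that $\rho$ injects into $b$ via $\alpha \mapsto j_U(\alpha)$, hence $b$ surjects onto $\rho$.

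Now $b$ is represented by $f : X \to [\rho]^{<\kappa}$. The ordertype function $x \mapsto \mathrm{otp}(f(x)) < \kappa$ represents an ordinal $\bar\gamma = [\mathrm{otp}\circ f]_U < j_U(\kappa)$, and $b$ is the image of $\bar\gamma$ under the internal enumeration, so $|b| \le |\bar\gamma^X/U|$ in the sense that there is a surjection from $\bar\gamma^X / U$ — no: more carefully, $f$ factors through the ordinals: each $f(x)$ is a subset of $\rho$ of ordertype $< \kappa$, so it is coded by its increasing enumeration, an element of ${}^{<\kappa}\rho$. The set $b = [f]_U$ thus lies in $j_U({}^{<\kappa}\rho)$, and the external map $\rho \to b$, $\alpha \mapsto j_U(\alpha)$, shows $b$ has external size $\ge \rho$. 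But $b$, being represented by a function into $[\rho]^{<\kappa}$, is a set of size at most $|\kappa^X/U|$ internally; externally I want to bound the objects used to name elements of $b$: each element $\xi \in b$ is $j_U(g)$-indexed... Rather than this, the cleanest route: an element $\xi \in b$ is of the form $[h]_U$ where $h(x) \in f(x)$ for $U$-a.e.\ $x$; modifying $h$ on a null set, we may take $h : X \to \rho$ with $h(x) \in f(x)$ always, and since $f(x) \in [\rho]^{<\kappa}$ we may reindex: there is $\tilde h : X \to \kappa$ with $h(x) = f(x)$-th-element-indexed-by-$\tilde h(x)$. So every $\xi \in b$ is represented by a function in $\kappa^X$, giving a surjection $\kappa^X/U \to b$. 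Composing, we get a surjection $\kappa^X/U \twoheadrightarrow b \twoheadrightarrow \rho$.

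Finally, $\kappa^X/U$ is an ordinal of the form $j_U(\kappa)$ (or a subset thereof; in any case $\kappa^X/U$ has ordertype $j_U(\kappa)$), so by Theorem~\ref{THM:Main1} there is a surjection from ${}^\kappa\kappa$ onto $j_U(\kappa)$, hence onto $\kappa^X/U$, hence onto $\rho$. This contradicts the hypothesis that there is no surjection from $\kappa^\kappa$ onto $\rho$, completing the contrapositive. I expect the main obstacle to be the bookkeeping in the previous paragraph — making precise, without choice, that every element of the ultrapower-set $b$ can be represented by a function into $\kappa$ rather than into $\rho$. The point is that the regularity witness $f$ gives, $U$-pointwise, a well-ordering of $f(x)$ in ordertype $<\kappa$, and this is exactly the data needed to replace $\rho$-valued representing functions by $\kappa$-valued ones uniformly; one must check this replacement is definable from $f$ alone, so no choice is used. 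A secondary point to verify carefully is that $\kappa > \lambda$ (so that $\kappa$ lies above the rank Berkeley and Theorem~\ref{THM:Main1} is applicable with parameter $\gamma = \kappa$), which is given in the hypotheses.
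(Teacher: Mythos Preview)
Your argument is correct and rests on the same core idea as the paper's, but the paper's execution is considerably more direct. Rather than forming the ultrapower set $b = [f]_U$ and then reindexing its elements, the paper simply writes down, for each $\nu < \rho$, the function $f_\nu : X \to \kappa$ given by $f_\nu(x) = \mathrm{otp}(\{i < \nu : x \in A_i\})$ and checks that $\nu < \mu$ implies $f_\nu <_U f_\mu$ (since for $x \in A_\nu$ the set $\{i < \mu : x \in A_i\}$ properly end-extends $\{i < \nu : x \in A_i\}$). This immediately gives $|\kappa^X/U| \geq \rho$, and then Theorem~\ref{THM:Main1} finishes as you indicate. Note that your reindexing function $\tilde h$ for the element $j_U(\nu) \in b$ is exactly the paper's $f_\nu$, so the two arguments are the same computation unwound to different depths; you can safely replace your detour through $b$ with the explicit increasing sequence and shorten the proof to a few lines.
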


    In Section \ref{Section:ChoicelessPrikry} we examine the Tree-Prikry forcing $\po(U)$ in the choiceless setting.
    Various forms of Prikry forcing have been developed for use in the choiceless context. Apter \cite{apter:ADsingularcardinals} analyzed the Prikry forcing $\po(U)$ for normal ultrafilters $U$, proving the Prikry property. 
    Also in the context of AD, Steel and Woodin \cite{steel-woodin:HODasCoreModel} developed a version of Tree Prikry forcing for use with the Martin measure.
    They proved the Prikry property for this forcing, and  that infinite subsequences of a Prikry-generic sequence are themselves generic. In \cite{koellner-woodin:LargeCardinalsFromDeterminacy}, Koellner and Woodin discuss a generalized Prikry forcing, and prove that it has the standard Prikry property. A common key point in all of these constructions is that the Prikry property can be verified without Dependent Choice.
    Our focus in Section \ref{Section:ChoicelessPrikry} is on choiceless alternatives to the Strong Prikry Property and the Mathias criterion, with the aim to establish a limitation on the cardinals that could be collapsed in Prikry extensions. 
   
    The largest difficulty in working in the choiceless setting is that much of the standard ultrapower analysis is no longer available. Moreover, we do not know if well-known combinatorial criteria for cardinal preservation such as Rowbottom or Shrinking (see \cite{Devlin}, \cite{Benhamou}) apply to our ultrafilters.
    Nevertheless, by leveraging the previous theorems, we are able to recover some of the useful connections between properties of $U$ and its powers $U^n$, $n < \omega$ and properties of generic extensions $V^{\po(U)}$ by the Prikry forcing $\po(U)$.
    
    In particular, we prove that:
    \begin{theorem}\label{THM:Main3}
        Suppose that $\gamma < \rho$ above a rank Berkeley cardinal such that there is no surjection from $\gamma^\gamma$ to $\rho$ then 
        for any  $\sigma$-complete well-founded ultrafilter $U$ on a well-ordered set,  there is no surjection $f : \gamma \to \rho$ in a generic extension by $\po(U)$.
    \end{theorem}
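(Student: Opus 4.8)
The plan is to read names for functions $\gamma \to \rho$ off the Prikry generic sequence by means of the Prikry property, transfer the situation back to $V$, and then apply Theorem~\ref{THM:Main1} to the relevant ultrapowers. We may assume $\gamma$ is infinite; for finite $\gamma$ there is no surjection from the finite set $\gamma$ onto the larger ordinal $\rho$ in any model, so the conclusion holds trivially. Fix a $\sigma$-complete well-founded ultrafilter $U$ on a well-ordered set $X$; write the conditions of $\po(U)$ as pairs $(s,T)$ with stem $s \in X^{<\omega}$ and $T$ a tree with $U$-large splitting, let $\leq^{*}$ denote direct extension, and let $\dot{\vec x} = \langle \dot x_{n} : n < \omega\rangle$ name the generic sequence.

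\emph{Step 1: reading names off the generic.} Fix a name $\dot f$ and a condition $p$ with $p \force \dot f : \check\gamma \to \check\rho$. For each \emph{fixed} $\xi < \gamma$, the set of $q \leq p$ such that, for every node $t$ of the tree of $q$, the condition obtained from $q$ by advancing its stem to $t$ already decides $\dot f(\check\xi)$, is $\leq^{*}$-dense below $p$; this is the standard node-by-node fusion argument applied to the Prikry property of Section~\ref{Section:ChoicelessPrikry}, and --- since it is carried out one coordinate $\xi$ at a time --- it goes through in the choiceless setting using only the $\sigma$-completeness of $U$, despite there being $\gamma$-many coordinates overall. Reading off the decided values (which are unambiguous, two tree-conditions with a common stem being compatible --- their trees intersect to a condition, again by $\sigma$-completeness) produces a \emph{single} partial function $v = v_{\dot f, p} \in V$, $v : X^{<\omega} \times \gamma \rightharpoonup \rho$, such that $p$ forces: for every $\xi < \gamma$ the sequence $n \mapsto v(\dot{\vec x}\restriction n, \xi)$ is eventually constant with eventual value $\dot f(\xi)$. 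In particular $p \force \mathrm{ran}(\dot f) = \bigcup_{n < \omega} \mathrm{ran}\big( v(\dot{\vec x}\restriction n, \cdot) \big)$.

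\emph{Step 2: transfer to $V$, and the ultrapower bound.} Assume toward a contradiction that some $p$ forces $\dot f$ onto $\check\rho$. Since $\mathrm{ran}(v(\dot{\vec x}\restriction n, \cdot)) \subseteq \bigcup_{t \in X^{n}} \mathrm{ran}(v(t,\cdot))$ and the latter union is absolute between $V$ and the extension, it follows that \emph{already in $V$} we have $\rho = \bigcup_{t \in X^{<\omega}} \mathrm{ran}(v(t,\cdot))$, i.e.\ $v$ is a surjection of $X^{<\omega} \times \gamma$ onto $\rho$. Next, the finite products $U^{n}$ are again $\sigma$-complete well-founded ultrafilters, now on the well-ordered sets $X^{n}$, so Theorem~\ref{THM:Main1} provides surjections ${}^{\gamma}\gamma \twoheadrightarrow j_{U^{n}}(\gamma)$ for every $n$; amalgamating these over the countable index set (using $|\gamma| = |\gamma \times \omega|$ to absorb the extra coordinate) yields a surjection ${}^{\gamma}\gamma \twoheadrightarrow \gamma^{*} := \sup_{n < \omega} j_{U^{n}}(\gamma)$. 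Since there is no surjection ${}^{\gamma}\gamma \twoheadrightarrow \rho$ and $\gamma^{*}, \rho$ are ordinals, $\gamma^{*} < \rho$.

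\emph{Step 3: thinning the index set --- the crux.} It remains to replace the possibly enormous index set $X^{<\omega}$ in the surjection of Step 2 by something governed by $\gamma^{*}$, using that $\dot{\vec x}$ is generic. Here one invokes the Mathias-type criterion for $\po(U)$ developed in Section~\ref{Section:ChoicelessPrikry} --- that a sequence is generic exactly when it eventually threads every ``diagonal'' tree coming from a $V$-function $X^{<\omega} \to U$ --- to show that $p$ forces $\mathrm{ran}(v(\dot{\vec x}\restriction n, \cdot))$ into a surjective image of $j_{U^{n}}(\gamma)$ witnessed by a $V$-object (the relevant data being the $V$-function $\xi \mapsto [\, v(\cdot,\xi)\restriction X^{n} \,]_{U^{n}}$ together with the transitive collapse of $X^{n}/U^{n}$), whence, taking the union over $n$, $p$ forces $\mathrm{ran}(\dot f)$ to be a surjective image of $\gamma^{*}$ and hence a proper subset of $\rho$ --- contradicting surjectivity. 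This step is the one I expect to be the main obstacle. Roughly, it amounts to showing that the Prikry generic cannot harness the full cardinality of $X$ to spread the values of $\dot f$ throughout $\rho$ --- that the range of a generically added function $\gamma \to \rho$ is controlled by the ultrapowers $\gamma^{X^{n}}/U^{n}$ rather than by $X$ itself --- and it is precisely here that the combinatorics of Section~\ref{Section:ChoicelessPrikry} must be used in full: a soft rank-function argument will not do, because without more than $\sigma$-completeness the relevant ``avoidance'' trees need not be well founded.
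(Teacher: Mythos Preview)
Your Step~3 is where the argument breaks, and the problem is structural rather than technical. The function $\xi \mapsto [v(\cdot,\xi)\uhr X^{n}]_{U^{n}}$ that you propose as ``the relevant data'' takes values in $\rho^{X^{n}}/U^{n} \cong j_{U^{n}}(\rho)$, not in $j_{U^{n}}(\gamma)$: your readings $v(\cdot,\xi)$ are functions into $\rho$, not into $\gamma$. Theorem~\ref{THM:Main1} says nothing about $j_{U^{n}}(\rho)$, so there is no bound to invoke. Even if you could identify two $\xi,\xi'$ with the same $U^{n}$-class, this would only mean $v(t,\xi) = v(t,\xi')$ on a $U^{n}$-large set of $t$; the generic initial segment $\vec{x}\uhr n$ is just one point, not $U^{n}$-generic in any sense, so equality of classes gives no information about $v(\vec{x}\uhr n,\xi)$ versus $v(\vec{x}\uhr n,\xi')$. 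The Mathias criterion does not help here: it governs which sequences are generic, but places no constraint tying a single coordinate $\vec{x}\uhr n$ to $U^{n}$-classes.

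The paper avoids this by switching to the equivalent formulation in terms of an \emph{injection} $\dot f : \rho \to \gamma$ (for ordinals, ``no surjection $\gamma \to \rho$'' and ``no injection $\rho \to \gamma$'' coincide in ZF). Now one reads off, for each $\alpha < \rho$, a canonical function $f_{\alpha} : Z_{\alpha} \to \gamma$ on some $Z_{\alpha} \in U^{n_{\alpha}}$, with values in $\gamma$. After stabilizing $n_{\alpha} = m$ on a set of size $\rho$ (using $\cf(\rho) > \aleph_{0}$), the classes $[f_{\alpha}]_{U^{m}}$ lie in $\gamma^{X^{m}}/U^{m}$, which \emph{is} bounded below $\rho$ by Theorem~\ref{THM:Main1}. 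A pigeonhole over the $\rho$-many indices $\alpha$ then produces $\alpha \neq \beta$ with $f_{\alpha} =_{U^{m}} f_{\beta}$; at any node $t$ in the common large set, compatible direct extensions force $\dot f(\alpha) = \dot f(\beta)$, contradicting injectivity. The key move you are missing is this reversal of direction: index over the large set $\rho$ and land in the small set $\gamma$, so that the relevant ultrapower is precisely the one bounded by Theorem~\ref{THM:Main1}.
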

    As an immediate corollary, we obtain:
    \begin{corollary}\label{COR:PrikryNoCollapseStong}
        If $\rho$ is a strong limit cardinal above a rank Berkeley then it cannot be collapsed in any Prikry generic extension $\po(U)$ by a $\sigma$-complete well-founded ultrafilter $U$ on a well-ordered set.
    \end{corollary}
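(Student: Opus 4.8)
The plan is to read the corollary off Theorem~\ref{THM:Main3}. All that needs checking is that a strong limit cardinal $\rho>\lambda$ meets the hypothesis of that theorem: for every $\gamma<\rho$ there is no surjection from ${}^{\gamma}\gamma$ onto $\rho$. So suppose toward a contradiction that $\rho$ is collapsed in some generic extension $V[G]$ by $\po(U)$, where $U$ is a $\sigma$-complete well-founded ultrafilter on a well-ordered set, and put $\gamma_0 = |\rho|^{V[G]} < \rho$, so there is a surjection $\gamma_0 \to \rho$ in $V[G]$. Since $\rho$ is a limit cardinal and $\max(\lambda,\gamma_0) < \rho$, we may fix a cardinal $\gamma$ with $\max(\lambda,\gamma_0) < \gamma < \rho$; extending the given surjection by the constant value $0$ on $[\gamma_0,\gamma)$ produces a surjection $\gamma \to \rho$ in $V[G]$, with $\lambda < \gamma < \rho$.

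Now apply the contrapositive of Theorem~\ref{THM:Main3} to $\gamma$, $\rho$ and $U$: from the surjection $\gamma \to \rho$ in $V[G]$ we get a surjection $s\colon {}^{\gamma}\gamma \to \rho$ already in $V$. It remains to see that this is impossible when $\rho$ is a strong limit. Working in ZF, the map $g \mapsto \{(\xi, g(\xi)) : \xi < \gamma\}$ injects ${}^{\gamma}\gamma$ into $\power(\gamma\times\gamma)$, and composing with the image map of a fixed bijection $b\colon\gamma\times\gamma\to\gamma$ (which exists in ZF) gives an injection $\iota\colon {}^{\gamma}\gamma \to \power(\gamma)$. The inverse of $\iota$ is a bona fide function on $\rng(\iota)$ without any use of choice, so the assignment $A \mapsto s(\iota^{-1}(A))$ for $A \in \rng(\iota)$ and $A \mapsto 0$ otherwise is a surjection $\power(\gamma) \to \rho$. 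Since $\gamma < \rho$, this contradicts $\rho$ being a strong limit cardinal (no surjection from $\power(\gamma)$ onto $\rho$). Hence $\rho$ is not collapsed in $V[G]$, and as $G$ was arbitrary the corollary follows.

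There is no real obstacle here beyond Theorem~\ref{THM:Main3} itself. The only substantive point is the elementary cardinal-arithmetic observation that a strong limit cardinal $\rho$ admits no surjection from ${}^{\gamma}\gamma$ onto it for $\gamma < \rho$ --- the same fact that underlies the Corollary to Theorem~\ref{THM:Main1} --- which is exactly where the choiceless reading of ``strong limit'' (no surjection from $\power(\gamma)$ onto $\rho$) enters. The only other thing worth a sentence is the harmless bookkeeping that pushes the collapsing ordinal $\gamma$ above the rank Berkeley $\lambda$ so that Theorem~\ref{THM:Main3} applies; once that is in place, the corollary is literally its contrapositive.
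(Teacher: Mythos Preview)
Your proposal is correct and follows exactly the approach the paper intends: the corollary is stated as an ``immediate corollary'' of Theorem~\ref{THM:Main3}, and your argument simply spells out the routine verification that a strong limit $\rho$ (in the paper's sense $\Theta(V_{\gamma+1})<\rho$ for all $\gamma<\rho$) admits no surjection ${}^{\gamma}\gamma\twoheadrightarrow\rho$, together with the harmless bookkeeping of pushing $\gamma$ above $\lambda$. There is nothing to add; this is the paper's proof made explicit.
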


    We also consider the question of possible changes in cofinalities after forcing with $\po(U)$, which is known to be  tightly connected to the notion of indecomposability. We provide a  criterion for $U$ being $\rho$-indecomposable for a regular cardinal $\rho$ which is related to the existence of suitable rank-Berkeley embeddings $j$, and use it to limit the possible cofinality of $\rho$ in a generic extension by $\po(U)$.

    These results form a natural stepping stone towards making all cardinals singular. Apter's construction in \cite{apter:ADsingularcardinals} obtains this result with a product of Prikry forcings. Because our ultrafilters have relatively low completeness, we are unable to do the same. Moreover, we do not know whether the product of two ultrafilters generates an ultrafilter. In Section \ref{Section:ChoicelessPrikry}, we develop Prikry forcing with tensors of an ultrafilter. This allows us to singularize multiple cardinals simultaneously using a single Prikry forcing on the tensor of different ultrafilters, rather than a product of Prikry forcings on different measures. The key point is since the domain of the tensor is well-ordered, by replacing a product of Prikry forcings with a Prikry forcing on the tensor, we are able to apply our theorem above and conclude that it does not collapse strong limit cardinals. Our final model(s), in which all cardinals are singular, is based on a finite directed system of Prikry forcings using tensors of finite sets of ultrafilters.
    
    Taking a symmetric extension using this forcing, we obtain a model where all cardinals above the rank Berkeley are singular. By adding a final collapse of a desirable initial segment to $\omega$, we will obtain the following theorem:

    \begin{theorem}\label{THM:Main4}
        Assuming ZF plus the existence of a rank Berkeley cardinal $\lambda$, and a strongly inaccessible cardinal $\eta$ above the first almost supercompact cardinal $\kappa_0 \geq \lambda$, there is a symmetric extension of $V_\eta$ satisfying ZF + all uncountable cardinals are singular.
    \end{theorem}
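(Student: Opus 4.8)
\emph{Overview.} The plan is to work inside $V_\eta$, singularize every regular cardinal in $[\kappa_0,\eta)$ by a symmetric extension built from the tensor Prikry systems of Section~\ref{Section:ChoicelessPrikry}, and then add a final L\'evy collapse of an initial segment down to $\omega$. Since $\eta$ is strongly inaccessible, $V_\eta\models\mathrm{ZF}$, the cardinal $\lambda$ remains rank Berkeley in $V_\eta$, and $\kappa_0$ remains almost supercompact there; and since all forcings involved lie in $V$ with rank below $\eta$, from the point of view of $V$ this is ordinary set forcing over $V_\eta$, so no class-forcing pathologies arise. Working in $V_\eta$, I would first use Goldberg's Theorem~\ref{THM:UFonregulars} to fix, for each regular $\delta$ with $\kappa_0\le\delta<\eta$, a $\kappa_0$-complete uniform ultrafilter $U_\delta$ on $\delta$, re-applying the theorem in later intermediate models as needed.

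\emph{The forcing.} Using the tensor Prikry system machinery, I would assemble a finite directed system $\langle\mathbb{Q}_i:i<n\rangle$ with limit $\mathbb{P}$ so that for every regular $\delta\in[\kappa_0,\eta)$ some coordinate is Prikry forcing with a tensor $U_{\delta_1}\otimes\dots\otimes U_{\delta_k}$ of finitely many of the available ultrafilters, one factor being a uniform ultrafilter on $\delta$; the generic then adds a cofinal $\omega$-sequence in $\delta$. Two features are essential. First, each such tensor is a $\sigma$-complete well-founded ultrafilter on a finite product of well-ordered sets, hence on a well-ordered set, so by Theorem~\ref{THM:Main3} and Corollary~\ref{COR:PrikryNoCollapseStong} (applied in the relevant intermediate models) no coordinate of the system collapses a strong limit cardinal; this is what should make the directed system coherent, as the cardinals needed to run later coordinates survive the earlier ones. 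Second, since every $U_\delta$ with $\delta\ge\kappa_0$ is $\kappa_0$-complete (as are their tensors), $\mathbb{P}$ adds no new ${<}\kappa_0$-sequences of ordinals, so it preserves every cardinal ${\le}\kappa_0$, does not collapse $\kappa_0$, and leaves $\cf(\kappa_0)=\omega$. The Prikry property for $\mathbb{P}$ I would establish without Dependent Choice, as in Section~\ref{Section:ChoicelessPrikry}.

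\emph{The symmetric extension and verification.} On $\mathbb{P}\times\col(\omega,<\kappa_0)$ I would put the natural symmetry system --- automorphisms modifying only a bounded set of coordinates of each $\mathbb{Q}_i$ and acting trivially on the collapse part, with the filter of subgroups generated by pointwise stabilizers of bounded sets of coordinates --- and let $N$ be the resulting symmetric submodel of $V_\eta[G\times H]$. Then $N\models\mathrm{ZF}$, $N$ contains the full collapse generic $H$, and for each regular $\delta\in[\kappa_0,\eta)$ the singularizing $\omega$-sequence has a bounded-support name, hence lies in $N$ --- what $N$ cannot (and must not) contain is a choice function selecting these sequences uniformly. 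For the verification: every cardinal of $N$ is a cardinal of $V_\eta$ since $V_\eta\subseteq N$; the collapse makes every cardinal of $V_\eta$ in $(\omega,\kappa_0)$ countable while $\mathbb{P}$ preserves cardinals ${\le}\kappa_0$, so $\omega_1^N=\kappa_0$ and $\cf^N(\kappa_0)=\omega$ (if $\kappa_0=\lambda$, so that Corollary~\ref{COR:PrikryNoCollapseStong} does not formally apply at $\kappa_0$, I would simply collapse $\kappa_0$ as well and take the first survivor above it instead). Given a cardinal $\alpha>\omega_1^N$ of $N$, it is a cardinal of $V_\eta$ in $(\kappa_0,\eta)$: if it is regular in $V_\eta$ it was not collapsed by $\mathbb{P}$ (else not a cardinal of $N$), so the coordinate attached to $\alpha$ added a cofinal $\omega$-sequence in $\alpha$ lying in $N$; if it is singular in $V_\eta$, compose a cofinal map $\mu=\cf^{V_\eta}(\alpha)\to\alpha$ (which lies in $V_\eta\subseteq N$) with a cofinal $\omega$-sequence through the regular cardinal $\mu$, obtained from $\mathbb{P}$ if $\mu\ge\kappa_0$ and from the collapse if $\mu<\kappa_0$. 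Either way $\cf^N(\alpha)=\omega$, so every uncountable cardinal of $N$ is singular.

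\emph{The main difficulty.} The hard part will be the construction of the finite directed system together with the proof of its coherence: one must choose how to group $[\kappa_0,\eta)$ into finite tensors and in what order to run the coordinates so that singularizing one batch of cardinals destroys neither a cardinal nor an ultrafilter needed by a later batch --- this is exactly where the no-collapse results of Theorems~\ref{THM:Main1} and~\ref{THM:Main3} and the preservation of a rank-Berkeley-type hypothesis through tensor Prikry forcing are used --- and so that no cardinal already singularized is accidentally re-regularized. A further delicate point is calibrating the symmetry system: it must be generous enough that all the singularizing $\omega$-sequences enter $N$, yet tight enough that every set of $N$ depends on only a bounded part of the generic, which is what yields $N\models\mathrm{ZF}$.
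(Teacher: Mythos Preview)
Your proposal has two genuine gaps that would prevent the argument from going through.

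First, you have misread the forcing. The ``finite directed system'' of the paper is not a system $\langle\mathbb{Q}_i:i<n\rangle$ with finitely many nodes; it is the poset $\po_{\fin}(\vec{U}) = \biguplus_{d\in[\Delta]^{<\omega}}\po(U_d)$ indexed by \emph{all} finite subsets $d$ of $\Delta=\{\delta<\eta:\kappa_0\le\delta\text{ regular}\}$, ordered by the lift relation. All the ultrafilters $U_\delta$ are fixed once and for all in the ground model (this is possible because the set of $\kappa_0$-complete ultrafilters on ordinals is well-orderable, Theorem~\ref{Fact:AlmostSCWellOrderable}); there are no ``intermediate models'', no ``later batches'', and no need to re-derive ultrafilters after partial forcing. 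Your plan to ``re-apply Theorem~\ref{THM:UFonregulars} in later intermediate models'' and to invoke Theorem~\ref{THM:Main3} there cannot work as stated, since those results require a rank Berkeley cardinal, and you give no argument that one survives any stage of your iteration. The paper avoids this entirely: every set in the symmetric model $\mathcal{N}$ has finite support (Lemma~\ref{LEM:StandardArguments}), so lives in a single extension $V[G\uhr d]$ by one Prikry forcing $\po(U_d)$, and Corollary~\ref{COR:PrikryNoCollapseStong} is applied only in $V$.

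Second, taking each $U_\delta$ merely $\kappa_0$-complete is not enough; the paper takes $U_\delta$ to be $\kappa_{i(\delta)}$-complete, where $\kappa_{i(\delta)}$ is the largest almost supercompact $\le\delta$. This graded completeness is what drives the factoring $\po(U_d)\equiv\po(U_{d\cap\kappa_i})\times\po(U_{d\setminus\kappa_i})$ at every almost supercompact $\kappa_i$ (Lemma~\ref{LEM:System2Product}, via Lemma~\ref{Lem:TensorProd2Prod}, which needs $U_2$ to be $\power(X_1)$-closed). That factoring is exactly what makes Power Set go through in $\mathcal{N}$ (Proposition~\ref{Prop:ZFinN}): one shows by induction that every $z\in V_{\gamma+1}^{\mathcal{N}}$ has a name in $V_{\kappa_i}$ for the least almost supercompact $\kappa_i>\rho_\gamma$, because the tail $\po(U_{d\setminus\kappa_i})$ is $\kappa_i$-closed and so cannot contribute. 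Your sketch asserts $N\models\mathrm{ZF}$ but does not address Power Set, and with only $\kappa_0$-completeness the needed factoring and closure simply fail above $\kappa_0$.
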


    The forcing construction by which we prove Theorem \ref{THM:Main4} is much closer to Apter's approach then to Gitik's construction, as it builds on adding singularizing sequences to all regular cardinals with the aim to not collapse cardinals. 
    
    In the last section, we review applications and discuss further directions related to these results. Most notably, we give a new construction of a model where the first measurable is the first inaccessible, related to a recent result of Gitik-Hayut-Karagila \cite{gitik-hayut-karagila:FirstMeasurable}.

    \begin{theorem}\label{THM:Main5}
        Assuming ZF plus the existence of a rank Berkeley cardinal $\lambda$, and an inaccessible cardinal $\eta > \lambda$ which is also a limit of almost supercompact cardinals, then there is a symmetric extension of where $\eta$ is both the first inaccessible and the first measurable.
    \end{theorem}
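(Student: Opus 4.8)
The plan is to adapt the construction behind Theorem~\ref{THM:Main4} so that it stops below $\eta$, keeping one cardinal $\kappa<\eta$ as a ``barrier'' — inaccessible in the symmetric model, but consumed by a final L\'evy collapse — so that $\eta$ ends up as the least inaccessible and, thanks to the strengthened hypothesis, the least measurable cardinal. Work over $V$ (or over $V_\zeta$ for a suitable $\zeta>\eta$, so that $\eta$ is a set). Fix an almost supercompact $\kappa$ with $\lambda\le\kappa<\eta$ such that every regular $\delta\ge\kappa$ carries a $\kappa$-complete uniform ultrafilter (Theorem~\ref{THM:UFonregulars}; such a $\kappa$ below $\eta$ exists because $\eta$ is a \emph{limit} of almost supercompacts), and for each regular $\delta$ with $\kappa<\delta<\eta$ fix such an ultrafilter $U_\delta$ on $\delta$. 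Assemble, exactly as in Section~\ref{Section:ChoicelessPrikry}, the finite directed system of tensor Prikry forcings singularizing all these $\delta$ simultaneously (but \emph{not} $\kappa$), let $N_0$ be the resulting symmetric extension, and then — mirroring the final step of the proof of Theorem~\ref{THM:Main4} — force over $N_0$ with the L\'evy collapse $\col(\omega,{<}\kappa)$ to obtain $N$. As $N$ arises from $V$ by a symmetric extension followed by set forcing, $N\models\mathrm{ZF}$.

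Three things about $N_0$ must be checked. First, every uncountable cardinal of $N_0$ in $(\kappa,\eta)$ has cofinality $\omega$, by construction. Second, $\kappa$ and $\eta$ remain strongly inaccessible in $N_0$: each bounded piece of the system is a Prikry forcing of size $<\eta$ on a $\sigma$-complete well-founded ultrafilter on a well-ordered set of size $<\eta$ (and tensors of such are again of this form), so by Theorem~\ref{THM:Main3} and Corollary~\ref{COR:PrikryNoCollapseStong} no such piece collapses the strong limit cardinals $\kappa,\eta$; together with the chain condition, the bounded supports, and the indecomposability analysis of Section~\ref{Section:ChoicelessPrikry} (used to rule out cofinality changes at $\kappa$), this keeps $\kappa$ and $\eta$ regular strong limits in $N_0$. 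Third, $\eta$ becomes measurable in $N_0$. Here I would first show $\eta$ is already measurable in $V$: fixing a cofinal sequence $\langle\kappa_\xi:\xi<\eta\rangle$ of almost supercompacts below $\eta$ together with coherent choices of the witnessing embeddings, form a directed system of elementary embeddings with critical points $\kappa_\xi$ whose direct limit $j\colon V\to M$ has $\crit(j)=\eta$ and $j(\eta)>\eta$, so that $\{X\subseteq\eta:\eta\in j(X)\}$ is an $\eta$-complete uniform ultrafilter on $\eta$. Then I would transfer it to $N_0$: because the supports are bounded below $\eta$ and $\eta$ is inaccessible, every subset of $\eta$, and every $<\eta$-sequence of such, lying in $N_0$ already appears in a sub-extension by a forcing of size $<\eta$, so a L\'evy--Solovay-style argument (valid in $\mathrm{ZF}$ below an inaccessible, using only $\eta$-completeness) yields an $\eta$-complete uniform ultrafilter on $\eta$ in $N_0$.

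Now pass to $N=N_0[\col(\omega,{<}\kappa)]$. The collapse makes every ordinal below $\kappa$ countable, so $\omega_1^N=\kappa$, which is regular but not a strong limit in $N$ — hence not inaccessible — and no longer measurable, the collapse having destroyed its measure. Since $\col(\omega,{<}\kappa)$ is $\kappa$-c.c., it preserves all cardinals and cofinalities $\ge\kappa$, so no collapse ``cascades'' above $\kappa$: every cardinal of $N$ in $(\kappa,\eta)$ keeps cofinality $\omega$ and is therefore singular, hence neither inaccessible nor measurable. As $\col(\omega,{<}\kappa)$ has size $\kappa<\eta$, the same $\mathrm{ZF}$ L\'evy--Solovay argument shows it preserves the inaccessibility of $\eta$ and the $\eta$-complete ultrafilter on $\eta$. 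So in $N$ no cardinal below $\eta$ is inaccessible and none is measurable, while $\eta$ is both; that is, $\eta$ is simultaneously the first inaccessible and the first measurable cardinal of $N$.

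The principal obstacle is certifying the measurability of $\eta$. Over $\mathrm{ZFC}$ this would be a routine lifting / L\'evy--Solovay exercise, but here there is no global choice and no full ultrapower calculus, and the ultrafilters from Theorem~\ref{THM:UFonregulars} are only $\kappa$-complete, so the $\eta$-completeness has to be manufactured from the almost-supercompactness data — arranging the coherence of the embeddings $j_\xi$ is the delicate point — while, since the forcing below $\eta$ is Prikry-type rather than closed, the bounded-support analysis of $\mathcal P(\eta)^{N_0}$ must be run through the tensor-Prikry machinery of Section~\ref{Section:ChoicelessPrikry}. A subsidiary but real point is the bookkeeping around the barrier $\kappa$: one must verify that the singularizing forcings above $\kappa$ leave it cleanly inaccessible (no cofinality change, no power-set blow-up) and that the final collapse leaves $\omega_1^N=\kappa$ genuinely non-measurable. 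For contrast with the Gitik--Hayut--Karagila construction~\cite{gitik-hayut-karagila:FirstMeasurable}, which starts from a measurable in a model of $\mathrm{ZFC}$ and collapses symmetrically, here both the singularizations below $\eta$ and the surviving measurability of $\eta$ are extracted from the choiceless large-cardinal structure above the rank Berkeley.
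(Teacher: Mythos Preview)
Your overall architecture (tensor Prikry system below $\eta$, symmetric extension, final collapse, L\'evy--Solovay to preserve a measure on $\eta$) matches the paper's, but there is a real gap at exactly the point you flag as ``the principal obstacle'': producing an $\eta$-complete ultrafilter on $\eta$ in $V$. Your plan is to assemble a coherent directed system of almost-supercompactness embeddings with critical points $\kappa_\xi$ and take a direct limit $j$ with $\crit(j)=\eta$. But the almost-supercompactness witnesses in this paper are \emph{reflection} maps $\pi\colon V_{\bar\eta}\to V_\eta$ with $\bar\eta<\kappa$, not embeddings $j\colon V\to M$ with critical point $\kappa$; there is no evident way to thread these into a directed system whose limit has critical point $\eta$, and you yourself note that arranging coherence without choice is delicate. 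The paper avoids this entirely: since $\eta$ is a regular limit of almost supercompacts, Goldberg's results in \cite{goldberg:measurablecardinals} give that the club filter on $\eta$ is $\eta$-complete, so any atom $U_\eta$ of this filter is an $\eta$-complete uniform ultrafilter on $\eta$. That single citation replaces your entire direct-limit construction.

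There is a second, smaller gap. You keep a barrier cardinal $\kappa$ unsingularized, collapse below it, and then assert that $\kappa=\omega_1^{N}$ is ``no longer measurable, the collapse having destroyed its measure.'' In $\mathrm{ZF}$ this is not automatic: $\omega_1$ can carry an $\omega_1$-complete ultrafilter, and the full L\'evy collapse does not obviously kill it. The paper sidesteps the issue by singularizing \emph{every} regular $\delta$ with $\kappa_0<\delta<\eta$ and then collapsing $\kappa_0$; in the resulting model every uncountable cardinal below $\eta$ is singular, hence neither inaccessible nor measurable, so no separate argument about $\omega_1$ is needed. With those two adjustments---cite the $\eta$-completeness of the club filter for the measure, and singularize all the way down so that nothing below $\eta$ is left regular---your outline becomes the paper's proof.
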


\section{Preliminaries}

We will use the following standard definitions and conventions concerning cardinalities of sets in choiceless settings. For sets $X,Y$, we write $|X| \leq |Y|$ if there is an injection from $X$ to $Y$, and $|X| \leq^* |Y|$ if there is a surjection from $Y$ onto $X$. 
For a set $X$, define $\Theta(X)$ to be the minimal cardinal $\alpha$ such that $|X| \not\geq^* |\alpha|$, and $\aleph(X)$ to be the minimal cardinal $\alpha$ such that $|X| \not\geq |\alpha|$. Note that $\aleph(X) \leq \Theta(X)$.

A cardinal $\kappa$ is strong limit if for every $\gamma < \kappa$, $\Theta(V_{\gamma+1}) < \kappa$, and, as usual, is strongly inaccessible if it is both strong limit and regular.

Rank Berkeley cardinals were introduced by Schlutzenberg. 
\begin{definition}
A cardinal $\lambda$ is rank Berkeley if for all ordinals $\eta > \lambda$ and $\alpha < \lambda$ there is an elementary embedding $j : V_\eta \to V_\eta$ such that $\alpha < cp(j) < \lambda$. 
\end{definition}

\begin{remark}\label{RMK:Fixing-p}
In the definition of a rank Berkeley cardinal, given a finite sequence of ordinals $p \in V_\eta$ we can always find $j : V_\eta \to V_\eta$ such that $j(p) = p$. This can be done by first moving to a larger ordinal $\tau$ that codes both $p,\eta$ (e.g., using normal form or Godel pairing) and taking an elementary embedding $j^*: V_{\tau+1} \to V_{\tau+1}$ such that $\alpha < cp(j^*) < \lambda$.  Then $j^*(\tau) = \tau$ since $\tau$ is definable without parameters in $V_{\tau+1}$. 
And since $\tau$ codes $p,\eta$, $j^*(p) = p$ and $j^*(\eta) = \eta$.  Therefore $j = j^*\uhr V_\eta : V_\eta \to V_\eta$ fixes $p$. 
\end{remark}

We will also make use of the notion of almost supercompact cardinal, introduced by Goldberg in \cite{goldberg:measurablecardinals} and Aspero \cite{Aspero}.
\begin{definition}
A cardinal $\kappa$ is almost supercompact if for every $\eta > \kappa$ and $\alpha < \kappa$ there is some $\bar{\eta} < \kappa$ and an elementary embedding $\pi : V_{\bar{\eta}} \to V_{\eta}$ such that $\pi(\alpha) = \alpha$. 
\end{definition}
A limit of almost supercompact cardinals is almost supercompact. The existence of a rank Berkeley cardinal implies there is a proper class of almost supercompact cardinals (see \cite[Corollary 2.5 and Lemma 2.6]{goldberg:measurablecardinals}).

We follow standard notations and conventions concerning ultrafilters. 
Let $U$ be an ultrafilter on a set $X$. For two functions $f,g$ whose domain is $X$, write $f =_U g$ if $\{ x \in X \mid f(x) = g(x) \} \in U$. The $=_U$ equivalence class of a function $f$ is denoted by $[f]_U$, and for a set $Y$, the set of $=_U$ equivalence classes on ${}^X Y$ is denoted by ${}^X Y/U$.

We say $U$ is well-founded if for every ordinal $\gamma$ the $\in_U$-relation on the set ${}^X \gamma/U$ is a well order.  If $U$ is well-founded we will often identify $\gamma^X/U$ with its ordinal order-type $otp(\gamma^X/U)$ and treat $\gamma^X/U$ as an ordinal.
Assuming DC every $\sigma$-complete ultrafilter is well-founded.  

Without DC, it is shown in \cite{goldberg:measurablecardinals} how almost supercompact cardinals provide alternative to the relevant choice principles. In particular it is shown that:
\begin{theorem}[\cite{goldberg:measurablecardinals}]\label{Fact:AlmostSCWellOrderable}
Suppose that $\kappa$ is an almost supercompact cardinal then
\begin{enumerate}
    \item Every $\kappa$-complete ultrafilter $U$ is well-founded
    \item Every set of $\kappa$-complete ultrafilters on ordinals is well orderable.
\end{enumerate}    
\end{theorem}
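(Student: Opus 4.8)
The plan is to prove both parts by reflecting the relevant configurations down along the almost-supercompactness embeddings $\pi : V_{\bar\eta}\to V_\eta$ with $\bar\eta<\kappa$ provided by the definition, using their elementarity (and, when needed, the freedom — by a parameter-fixing maneuver as in Remark~\ref{RMK:Fixing-p} — to assume any prescribed $V_\alpha$ with $\alpha<\kappa$ lies in the fixed-point set of $\pi$). The recurring difficulty, typical of the choiceless context, is that in $\mathrm{ZF}$ ill-foundedness of a relation is only $\Sigma_1$ and need not be witnessed by a descending $\omega$-sequence, so the familiar ZFC argument — $\sigma$-completeness rules out descending $\omega$-sequences in ultrapowers, hence well-foundedness — does not go through; and small rank-initial segments need not be well-orderable.

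For part~(1) I would localize and argue by transfinite induction. For a limit ordinal $\zeta$, let $\Phi(\zeta)$ be the assertion: for every cardinal $\mu<\zeta$ that is almost supercompact for all targets below $\zeta$, every $\mu$-complete ultrafilter of rank $<\zeta$ is well-founded. Proving $\Phi(\zeta)$ for all limit $\zeta$ suffices, since a $\kappa$-complete $U$ is covered by $\Phi(\zeta)$ for any limit $\zeta>\mathrm{rank}(U)$. Suppose $\Phi(\zeta)$ fails, say $W$ on $Y$ is $\mu$-complete of rank $<\zeta$, with $\mu<\zeta$ almost supercompact below $\zeta$, yet $({}^Y\gamma/W,\in_W)$ is ill-founded for some ordinal $\gamma$ — witnessed canonically, with no choice, by the set $A$ complementing the well-founded part. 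Pick a limit $\zeta'<\zeta$ with $V_{\zeta'}$ correct about the situation and containing $W,\mu,\gamma,A$. Then $V_{\zeta'}$ satisfies the sentence ``there are: an ordinal $m$ that is almost supercompact for all targets below my height; an $m$-complete ultrafilter $W'$ of rank below my height on a set $Y'$; and an ordinal $\gamma'$, with $({}^{Y'}\gamma'/W',\in_{W'})$ ill-founded.'' Applying almost supercompactness of $\mu$ with target $\zeta'$, fix elementary $\pi : V_{\bar\zeta'}\to V_{\zeta'}$ with $\bar\zeta'<\mu$; note $\bar\zeta'$ is again a limit ordinal, and $\bar\zeta'<\zeta$. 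By elementarity the displayed sentence holds in $V_{\bar\zeta'}$, and now its witnesses are \emph{genuine}: the cardinal $\bar m<\bar\zeta'$ is really almost supercompact for targets below $\bar\zeta'$ (its witnessing embeddings lie in $V_{\bar\zeta'}$, hence are honest elementary maps), the ultrafilter $\bar W$ is really $\bar m$-complete of rank $<\bar\zeta'$, and — by downward absoluteness of ``$\bar A$ is non-empty with no $\in_{\bar W}$-minimal element'' — ${}^{\bar Y}\bar\gamma/\bar W$ is really ill-founded. This contradicts $\Phi(\bar\zeta')$, which holds by the induction hypothesis. The point of carrying $\mu$-completeness rather than plain ultrafilters is exactly that the reflected witness $\bar W$ inherits enough completeness to re-enter the inductive hypothesis one level down.

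For part~(2), use part~(1): each $\kappa$-complete ultrafilter $U$ on an ordinal $\delta$ now has a transitive ultrapower $j_U : V\to M_U$, with seed $s_U = [\mathrm{id}_\delta]_U < j_U(\delta)$ and $U=\{A\subseteq\delta : s_U\in j_U(A)\}$; thus $U$ is determined by $s_U$ together with $j_U$ restricted to a small neighbourhood of $\mathcal P(\delta)$. Given a \emph{set} $\mathcal U$ of such ultrafilters, fix $\beta$ bounding their underlying ordinals. The plan is to code this data by an ordinal uniformly in $U\in\mathcal U$: almost supercompactness yields a single ordinal $\Lambda$ with $j_U(\beta)<\Lambda$ for all $U\in\mathcal U$ (the embeddings $\pi:V_{\bar\eta}\to V_\eta$ bound how far a $\kappa$-complete ultrapower can move $\beta$), so all seeds $s_U$ and all images $j_U(A)$, $A\subseteq\delta_U$, lie below $\Lambda$; then, fixing one almost-supercompactness embedding whose fixed-point set contains $V_{\beta+\omega}$, one shows $U$ is definable from $s_U$ and finitely many ordinals $<\Lambda$ relative to that fixed embedding. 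This yields an injection of $\mathcal U$ into $\mathrm{Ord}$, hence a well-ordering.

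The main obstacle, in both parts, is the choiceless reflection bookkeeping. In~(1) it is to verify that the reflected configuration genuinely — not merely in $V_{\bar\zeta'}$ — witnesses ill-foundedness while the reflected critical cardinal remains suitably almost supercompact, so that the induction closes cleanly. In~(2) it is to establish the uniform bound $\Lambda$ on $j_U(\beta)$ and to turn ``$U$ is recoverable from $j_U\restriction\mathcal P(\delta_U)$ and $s_U$'' into a genuine ordinal code, since $\mathcal P(\delta_U)$ is in general not well-orderable.
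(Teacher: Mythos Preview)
The paper does not supply its own proof of this theorem; it is quoted as a result from \cite{goldberg:measurablecardinals}. So there is nothing in the paper to compare against, and I can only assess your proposal on its own terms.

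For part~(1) there is a genuine gap in the inductive step. You take a failure of $\Phi(\zeta)$ witnessed by $\mu,W,\gamma,A$ and then ``pick a limit $\zeta'<\zeta$ with $V_{\zeta'}$ \dots\ containing $W,\mu,\gamma,A$.'' But your hypothesis $\Phi(\zeta)$ bounds only $\mu$ and $\operatorname{rank}(W)$ below $\zeta$; the ordinal $\gamma$ witnessing ill-foundedness, and hence the set $A\subseteq {}^{Y}\gamma/W$, are not so bounded, and in $\mathrm{ZF}$ there is no obvious reason the least such $\gamma$ should lie below $\zeta$. You really do need $\zeta'<\zeta$, since $\mu$ is only assumed almost supercompact for targets below $\zeta$ and you apply that with target $\zeta'$. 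A natural repair is to fold the bound on $\gamma$ into the inductive hypothesis---let $\Phi(\zeta)$ assert that ${}^{Y}\gamma/W$ is well-founded for every $\gamma<\zeta$, rather than that $W$ is well-founded outright---after which all the data has rank $<\zeta$, your reflection goes through, and the full statement follows by letting $\zeta$ be arbitrarily large. As written, however, the argument does not close.

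For part~(2) the outline is reasonable but the decisive steps are only asserted. The uniform bound $j_U(\beta)<\Lambda$ is in fact easy and needs no almost supercompactness: ${}^{\delta_U}\beta$ surjects onto $j_U(\beta)$ and ${}^{\delta_U}\beta$ injects into ${}^{\beta}\beta$, so $\Lambda=\Theta({}^{\beta}\beta)$ already works. All the content therefore sits in the coding step, and there your plan breaks down. You propose ``fixing one almost-supercompactness embedding whose fixed-point set contains $V_{\beta+\omega}$,'' but the definition only lets you fix a single ordinal $\alpha<\kappa$, and the domain $V_{\bar\eta}$ of any such embedding has $\bar\eta<\kappa$, while $\beta$ (the bound on the underlying ordinals of the ultrafilters in $\mathcal{U}$) may be far above $\kappa$; so $V_{\beta+\omega}$ is not even a subset of the domain, let alone fixed pointwise. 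The mechanism by which each $U$ is to be recovered from finitely many ordinals relative to a fixed embedding is thus left unexplained, and that is exactly where the choiceless difficulty lives.
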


We say $U$ is $Z$-closed for a set $Z$ if for every sequence $\la A_z \mid z \in Z\ra \subseteq U$, the intersection $\bigcap_{z \in Z} A_z \in U$. We say $U$ is $R$-complete for a set $R$ if it is $Z$-closed for every $Z \in R$.

\section{Choiceless Ultrafilters on Ordinals}\label{SEC:Choiceless UFs}

Throughout we fix a RB cardinal $\lambda$ in $V$ and an almost supercompact cardinal $\kappa > \lambda$.
Our results concerning the strength of well-founded ultrafilters on ordinals are based on the following notion of weak $U$-equivalence.

\begin{definition}
Let $U$ be an ultrafilter on a set $X$ and $Y$ a class. Define the relation $\sim^{weak}_U$ on the class of functions ${}^X Y$ by 
$f \sim^{weak}_U g$ iff there exists an injective function $t$ such that $g =_U t \circ f$ (I.e., $\{ x \in X \mid g(x) = t(f(x))\} \in U$).\footnote{The notion of a weak equivalence class is related to the notion of a "constellation" from Puritz \cite{Puritz-Constallation}. Thanks to Gabe Goldberg for pointing out this connection.}
\end{definition}

We start by verifying basic properties concerning the weak relation.
\begin{lemma}${}$\label{LEM:WeakEquivBasics}
\begin{enumerate}
\item $\sim^{weak}_U$ is an equivalence relation.
\item  the standard $U$-equivalence (equality mod $U$) refines $\sim^{weak}_U$.
\item If $U$ is well-founded then for every well-orderable set $Y$, the quotient set of equivalence classes $Y^X/\sim^{weak}_U$ is also well-orederable.
\item If $U$ is well-founded then for every well-orderable set $Y$, there is a surjection.
\[
\Phi : \left( Y^X /\sim^{weak}_U \times Y^Y\right) \twoheadrightarrow Y^X/U
\]
\end{enumerate}
\end{lemma}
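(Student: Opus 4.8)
\emph{Proof plan for item (4).} The first move is to reduce to the case where $Y$ is an ordinal $\theta$. Fix a bijection $e \colon Y \to \theta$, available since $Y$ is well-orderable; it induces bijections $Y^X/U \cong \theta^X/U$, $Y^X/\sim^{weak}_U \cong \theta^X/\sim^{weak}_U$ and $Y^Y \cong \theta^\theta$ that respect all the relevant structure, so a surjection $\Phi$ in the ordinal case pulls back to one in general. Working with $Y = \theta$, recall that since $U$ is well-founded $(\theta^X/U,\in_U)$ is a well-order, identified as usual with its order type $j_U(\theta)$.

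The heart of the argument is to assign to each weak class a canonical $U$-class sitting inside it, with no appeal to choice. Given a $\sim^{weak}_U$-class $c$, the set $\{[f]_U : f \in c\} \subseteq \theta^X/U$ is nonempty, hence by well-foundedness has an $\in_U$-least element; call it $\mathrm{rep}(c)$. This yields an honest function $\mathrm{rep}\colon \theta^X/\sim^{weak}_U \to \theta^X/U$, and by item (2) any $f$ with $[f]_U = \mathrm{rep}(c)$ automatically lies in $c$. (Post-composing $\mathrm{rep}$ with $\theta^X/U \cong j_U(\theta)$ in fact re-proves item (3), since $c$ is recoverable from $\mathrm{rep}(c)$.) I would then set
\[
\Phi(c,h) := [\,h\circ f\,]_U \qquad\text{for any } f \text{ with } [f]_U = \mathrm{rep}(c),
\]
which is well-defined: post-composition by $h$ respects $=_U$, so the choice of representative $f$ of the $U$-class $\mathrm{rep}(c)$ does not matter.

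For surjectivity, fix $[g]_U \in \theta^X/U$ and let $c$ be the $\sim^{weak}_U$-class of $g$. Choose any $f_0$ with $[f_0]_U = \mathrm{rep}(c)$; then $f_0 \in c$, so $f_0 \sim^{weak}_U g$, and by the definition of $\sim^{weak}_U$ there is an injective $t$ with $g =_U t\circ f_0$. Since $\rng(f_0) \subseteq \theta$ and $t$ maps the $f_0$-values occurring on the agreement set $\{x : g(x) = t(f_0(x))\}$ into $\theta$, we may pad $t\uhr\rng(f_0)$ to a total function $h\colon\theta\to\theta$ with $h\circ f_0 =_U g$; then $\Phi(c,h) = [g]_U$.

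I expect the one real obstacle to be exactly the middle step: producing a choice-free selector that picks a representative inside each weak class. Well-foundedness of $U$ is precisely what makes this possible, via the $\in_U$-least $U$-class. Everything else — the reduction to ordinals, the well-definedness of $\Phi$, and padding $t$ to a total map on $\theta$ — is routine bookkeeping. Note that the proof uses only well-foundedness of $U$, no completeness, and that $X$ need not be well-orderable.
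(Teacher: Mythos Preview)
Your argument is correct and matches the paper's approach: pick the $\in_U$-minimal $U$-class inside each weak class as a choice-free representative, then set $\Phi(c,h)=[h\circ f]_U$ for $[f]_U$ that minimal class. You are slightly more careful than the paper in making the reduction to an ordinal $\theta$ explicit and in padding the partial injective witness $t$ to a total $h\in\theta^\theta$; the paper simply declares surjectivity an immediate consequence of the definition of $\sim^{weak}_U$ without spelling this out.
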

\begin{proof}${}$
\begin{enumerate}
\item Let $f \sim^{weak}_U g$ be witnessed by an injective function $t$. I.e., $t \circ f =_U g$. Since $t$ is injective, we can compose both sides with $t^{-1}$  and conclude that $f = t^{-1} \circ t \circ f =_U t^{-1} \circ g$ witnessing $g \sim^{weak}_U f$.\\
\noindent
For each function $f \in {}^X Y$, denote its standard $U$-equivalence class by  $[f]_U = \{ g \in Y^X\mid g =_U f\}$ and its weak $U$-equivalence class by $[f]_{\sim^{weak}_U} = \{ g \in Y^X\mid g \sim^{weak}_U f\}$.

\item Clearly, if $f =_U g$ then $f \sim^{weak}_U g$. We can therefore view $\sim^{weak}_U$ as an equivalence relation on the set $Y^X/U$. We denote the induced equivalence relation on $Y^X/U$ by $\sim^{weak}$. Namely, for classes $[f]_U, [g]_U \in Y^X/U$ define $[f]_U \sim^{weak} [g]_U$ if $f \sim^{weak}_U g$. 

\item Let $Y$ be a well-orderable set. The well-foundedness of $U$ implies that the set of  equivalence classes $Y^X/U$ is well-orderable. By the previous clause we may order $Y^X/\sim^{weak}_U$ by associating each $\sim^{weak}_U$ class $C$ with the minimal $Y^X/U$ class contained in it. Denote this class by $C_U$. 

\item For a pair $(C,t) \in Y^X\sim^{weak}_U \times \{ t \in Y^Y \mid t \text{ is injective}\} $, and a function  $f : X \to Y$ representing the minimal $U$-class $C_U$ contained in $C$ (i.e., $C_U= [f]_U$), define $\Phi(C,t) = [t \circ f]_U$. 
This is well-defined since if $f' =_U f$ is another function representing $C_U$ then $t \circ f' =_U t \circ f$.
The definition of $\sim^{weak}_U$ clearly implies $\Phi$ is surjective. 
\end{enumerate}
\end{proof}

\begin{theorem}
Suppose that $U$ is a well-founded ultrafilter on a set $X$ so that for every $j : V_\eta \to V_\eta$, if $X,U \in V_\eta$ and $j(X,U) = X,U$, then $Fix(j) \cap X = \{ x\in X \mid j(x) = x\} \in U$.

For every cardinal $\gamma$, if there is such an embedding $j:V_\eta \to V_\eta$ with $X,U\in V_\eta$, $cp(j) < \lambda$, and $j(X, U) = X, U$, such that $j(\gamma) = \gamma$, then there is some $\bar{\lambda} < \lambda$ and a surjection 
$$\Phi : \bar{\lambda} \times \gamma^\gamma \twoheadrightarrow \gamma^X/U.$$
\end{theorem}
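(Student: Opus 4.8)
The plan is to use the hypothesized embedding $j : V_\eta \to V_\eta$ together with Lemma~\ref{LEM:WeakEquivBasics}(4) to replace the factor $Y^X/\sim^{weak}_U$ appearing there by something of size $<\lambda$. Concretely, set $Y = \gamma$ (a well-ordered set, so the lemma applies), so we already have a surjection $\Phi_0 : (\gamma^X/\sim^{weak}_U) \times \gamma^\gamma \twoheadrightarrow \gamma^X/U$. Since $\gamma^\gamma$ is the second factor we want in the conclusion, it suffices to produce a surjection from some $\bar\lambda < \lambda$ onto $\gamma^X/\sim^{weak}_U$. The key observation is that the hypothesis on $U$ — that $\mathrm{Fix}(j)\cap X \in U$ whenever $j$ fixes $X,U$ — forces every function $f : X \to \gamma$ to be weakly $U$-equivalent to its restriction-composed-with-$j$, and more usefully, constrains the weak class of $f$ in terms of $f\uhr(\mathrm{Fix}(j)\cap X)$ up to the action of $j$.

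First I would fix the embedding $j$ supplied by the hypothesis, with $\crit(j) < \lambda$, $j(X,U)=(X,U)$, and $j(\gamma)=\gamma$; let $F = \mathrm{Fix}(j)\cap X \in U$. For a function $f : X \to \gamma$, consider $j(f)$, which is again a function $X \to \gamma$ (since $j$ fixes $X$ and $\gamma$). I claim $f \sim^{weak}_U j(f)$: on $F$ we have — hmm, this needs care, since $j(f)$ agrees with $f$ on $F$ only after applying $j$ to values, i.e.\ $j(f)(x) = j(f(x))$ for $x \in F$, and $j\uhr\gamma$ need not be the identity. But $j\uhr\gamma$ \emph{is} injective, so $t := j\uhr\gamma$ witnesses $j(f) =_U t\circ f$ on $F$, hence $f \sim^{weak}_U j(f)$. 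Now the real point: by elementarity, $j(f)$ ranges over all functions in $j``(\gamma^X)$, and I want to argue that the weak $U$-classes of functions $X\to\gamma$ are \emph{parametrized} by data living in $V_{\crit(j)}$ (or at least in $\gamma^{\bar\eta}$ for some $\bar\eta<\lambda$ obtained via an almost-supercompactness-style argument), yielding a set of size $<\lambda$. The intended mechanism is: the weak class of $f$ is determined by $f\uhr F$ modulo the injection $t=j\uhr\gamma$; iterating $j$ and using that $\crit(j)<\lambda$, one pushes the representative down below $\lambda$, so that each weak class has a representative that is (coded by) an element of some $V_{\bar\lambda}$, $\bar\lambda<\lambda$.

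The main obstacle — and the step I would spend the most effort on — is making precise the passage ``weak class of $f$ is coded below $\lambda$.'' The naive hope that $f\uhr F$ has small range fails: $F$ has size $|X|$ which is typically $\geq\lambda$. What actually works is to use well-foundedness and Lemma~\ref{LEM:WeakEquivBasics}(3): $\gamma^X/\sim^{weak}_U$ is well-orderable, so it has an order type $\mu$, and the task reduces to showing $\mu < \lambda$. Here one argues by contradiction: if $\mu \geq \lambda$ (indeed if $\mu \geq \crit(j)$ suffices for a contradiction), pick a $\sim^{weak}_U$-class whose ``rank'' in this well-order is exactly $\crit(j)$; apply an embedding $j$ with $\crit(j)$ in the relevant range and $j$ fixing $X,U$ and the well-ordering's defining parameters (using Remark~\ref{RMK:Fixing-p} to fix the finitely many needed parameters), and derive that this class is both moved and fixed. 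More carefully: $j$ acts on $\gamma^X/\sim^{weak}_U$ (since it fixes $X, U$, $\gamma$, and $\sim^{weak}_U$ is defined from these), and $f \sim^{weak}_U j(f)$ as shown above means $j$ acts \emph{trivially} on every weak class; but an elementary embedding fixing every element of a well-ordered set of order type $\mu$ must fix $\mu$, and if $\mu \geq \crit(j)$ then $j$ moves some ordinal below $\mu$, giving a class at that rank which $j$ moves — contradiction. Hence $\mu < \crit(j) < \lambda$, so taking $\bar\lambda = \mu$ and composing the resulting surjection $\bar\lambda \twoheadrightarrow \gamma^X/\sim^{weak}_U$ (a bijection, even) with $\Phi_0$ gives the desired $\Phi : \bar\lambda \times \gamma^\gamma \twoheadrightarrow \gamma^X/U$. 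The one genuinely delicate check is that $j$ fixing each weak class pointwise implies it fixes the order type $\mu$ — this uses that the well-order on $\gamma^X/\sim^{weak}_U$ from Lemma~\ref{LEM:WeakEquivBasics}(3) is itself definable from $X, U, \gamma$ (via the minimal-$U$-class selector), so $j$ respects it, and a pointwise-fixed definable well-order is order-isomorphically fixed.
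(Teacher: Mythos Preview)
Your proposal is correct and, after discarding the exploratory dead end about ``iterating $j$,'' follows exactly the paper's argument: use $t = j\uhr\gamma$ to witness $f \sim^{weak}_U j(f)$ on $\mathrm{Fix}(j)\cap X \in U$, conclude that $j$ acts as the identity on the well-orderable set $\gamma^X/\sim^{weak}_U$ (whose well-order is definable from $X,U,\gamma$), and hence that its order type $\bar\lambda$ satisfies $\bar\lambda \leq \crit(j) < \lambda$, so composing with the surjection of Lemma~\ref{LEM:WeakEquivBasics}(4) finishes. The paper presents this directly rather than by contradiction, but the content is identical.
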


\begin{proof}

Let  $\gamma$ by a cardinal. By Lemma \ref{LEM:WeakEquivBasics} since $U$ is well-founded 
$\gamma^X/\sim^{weak}_U$ is well-orderable. Let 
\[\bar{\lambda} = rank\left(\gamma^X/\sim^{weak}_U \right)\] 
The surjection $\Phi$ from  Lemma \ref{LEM:WeakEquivBasics}, 
maps $\bar{\lambda} \times \gamma^\gamma$ onto ${}^X \gamma /U$. 
It therefore remains to show that $\bar{\lambda} < \lambda$. To this end, let $j : V_\eta \to V_\eta$ be an embedding as in the statement of the Theorem. Namely, $j(X,U,\gamma) = (X,U,\gamma)$. By our assumption, the set
$Fix(j) \cap X = \{ x \in X \mid j(x) = x\} \in U$. 
As $j(X,U,\gamma) = X,U,\gamma$ it is clear that $j$ respects the relation $\sim^{weak}_U$ on classes.
We will finish the proof by showing that $j\uhr \left(\gamma^X/\sim^{weak}_U\right) = id\uhr \left( {\gamma^X/\sim^{weak}_U}\right)$ is the identity, as this would imply that $\bar{\lambda} < cp(j) < \lambda$.

Let $t = (j\uhr \gamma) \in \gamma^\gamma$; note that $t$ is an  injection. We get that for every function $f \in \gamma^X$ and $x \in Fix(j) \cap X$, 
$$
j(f)(x) = j(f)(j(x)) = j(f(x)) = (j\uhr \gamma) \circ f (x)
$$
witnesses that $f \sim^{weak}_U j(f)$. 
\end{proof}

\begin{corollary}\label{COR:gammaModUIsSmall}
Let $\gamma < \rho$ be cardinals above $\lambda$ 
for which $\gamma^\gamma \not \geq^* \rho$.
For every well-founded ultrafilter $U$ on a set $X$ for which there is an elementary embedding
 $j : V_\eta \to V_\eta$ that fixes $X,U,\gamma$ and satisfies $Fix(j) \cap X = \{ x\in X \mid j(x) = x\} \in U$, we have $|\gamma^X/U| < \rho$.
\end{corollary}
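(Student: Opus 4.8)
The plan is to obtain this as a quick consequence of the preceding Theorem, with the bulk of the remaining work being a cardinality bookkeeping argument valid without choice. I would first invoke the Theorem: the hypothesis of the Corollary supplies an elementary embedding $j : V_\eta \to V_\eta$ fixing $X, U, \gamma$ with $Fix(j) \cap X \in U$, and — using that $\lambda$ is rank Berkeley, as in Remark \ref{RMK:Fixing-p} — we may arrange $cp(j) < \lambda$, so the hypotheses of the Theorem hold for this $\gamma$. The Theorem then produces an ordinal $\bar\lambda < \lambda$ together with a surjection $\Phi : \bar\lambda \times \gamma^\gamma \twoheadrightarrow \gamma^X/U$, where $\gamma^X/U$ is identified with its order type since $U$ is well-founded.

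Next I would absorb the factor $\bar\lambda$. As $\gamma$ is a cardinal above $\lambda$ we have $\bar\lambda < \lambda < \gamma$, and $\bar\lambda \geq 1$ because $\gamma^X/U$ is a nonzero ordinal (it contains the classes of the constant functions, hence has order type at least $\gamma$). Hence there is a surjection from $\gamma$ onto $\bar\lambda$, and, $\gamma$ being infinite, a surjection $\gamma^\gamma \twoheadrightarrow \gamma \times \gamma^\gamma$ — for instance $h \mapsto \bigl(h(0),\, \xi \mapsto h(1+\xi)\bigr)$, using $1 + \gamma = \gamma$. Composing these with $\Phi$ gives a surjection $\gamma^\gamma \twoheadrightarrow \gamma^X/U$, that is, $\gamma^X/U \leq^* \gamma^\gamma$.

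Finally, if $|\gamma^X/U| \geq \rho$ then, since $\rho$ is a cardinal, the ordinal $\gamma^X/U$ is at least $\rho$, so clamping to $\rho$ yields a surjection $\gamma^X/U \twoheadrightarrow \rho$; composing with the surjection just obtained contradicts the hypothesis $\gamma^\gamma \not\geq^* \rho$. Hence $|\gamma^X/U| < \rho$. The only step requiring real care is the first: one must be able to take the witnessing embedding with critical point strictly below $\lambda$ while retaining that it fixes $X, U, \gamma$ and keeps $Fix(j) \cap X$ in $U$, as this is exactly what forces $\bar\lambda < \lambda$ in the Theorem; everything after that is routine manipulation of $\leq^*$.
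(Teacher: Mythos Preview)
Your overall strategy is exactly what the paper intends: invoke the preceding Theorem to obtain a surjection $\bar\lambda \times \gamma^\gamma \twoheadrightarrow \gamma^X/U$ with $\bar\lambda < \lambda$, absorb the factor $\bar\lambda$ (using $\bar\lambda < \lambda < \gamma$), and conclude via $\gamma^\gamma \not\geq^* \rho$. The paper gives no separate proof of the Corollary, treating it as immediate from the Theorem along precisely these lines, and your cardinality bookkeeping in the second and third paragraphs is clean and choice-free.

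The one place your argument does not go through is the appeal to Remark~\ref{RMK:Fixing-p} to arrange $cp(j) < \lambda$. That Remark manufactures a \emph{new} rank-Berkeley embedding fixing a prescribed finite tuple of \emph{ordinals}; it does not let you impose $cp(j) < \lambda$ on the embedding $j$ already supplied by the Corollary's hypothesis, nor does it ensure that a freshly produced embedding $j'$ fixes the (in general non-ordinal) sets $X,U$ or satisfies $Fix(j')\cap X \in U$. You are right that this is the only delicate point, and right about why it matters (it is what forces $\bar\lambda < \lambda$ in the Theorem's proof). The resolution is not Remark~\ref{RMK:Fixing-p} but simply that the Corollary's hypothesis should be read as including $cp(j) < \lambda$: the preceding Theorem carries this assumption explicitly, and its omission in the Corollary is a minor imprecision in the statement rather than something you are expected to derive. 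With that reading your proof is complete and matches the paper.
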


We aim to extend the result of the last Corollary to replace the assumptions about $U$ with the assumption that $X$ is well orderable. The following result of Goldberg  will be used to get the desired reduction.

\begin{theorem}[\cite{goldberg:choicelesscardinals}]\label{Prop:Fix(j)InU}
For every well-founded ultrafilter $U$ on an ordinal $\delta$,  and for every large enough $\eta$ with $U \in V_\eta$ there is an elementary embedding $j : V_\eta \to V_\eta$ such that $j(U) = U$ and $Fix(j) \cap \delta \in U$. \\Moreover, given a finite tuple of ordinals $p \in V_\eta$ one can find $j$ that further fixes $p$.
\end{theorem}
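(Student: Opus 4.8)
The plan is to split the problem into routine bookkeeping and one genuinely new construction. By the device of Remark~\ref{RMK:Fixing-p} it suffices to produce, for all sufficiently large $\eta$ with $U\in V_\eta$, an elementary embedding $j:V_\eta\to V_\eta$ with $\crit(j)<\lambda$, $j(U)=U$ (so automatically $j(\delta)=\delta$, since $\delta$ is definable from $U$), and $Fix(j)\cap\delta\in U$; the further requirement that $j$ fix a prescribed finite tuple $p$ of ordinals is then met by carrying the construction out after coding $p$ together with all ordinal parameters it uses into one ordinal $\tau$ and restricting an embedding of $V_{\tau+1}$ with critical point below $\lambda$, exactly as in that remark. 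One may also assume $U$ is uniform on $\delta$, replacing $\delta$ by the order type of a $U$-least element of $U$ if necessary.

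Two facts should guide the construction. First, a $U$-dependent choice of $j$ is unavoidable: a general self-embedding fixing $U$ need not satisfy $Fix(j)\cap\delta\in U$ (in the ZFC picture the fixed-point set of an ultrapower embedding is a club, and a $\delta$-complete uniform ultrafilter need not contain a given club), so $j$ has to be tailored so that its fixed points align with $U$. Second, since $U$ is well-founded we have the ordinal ultrapower $j_U:\mathrm{Ord}\to\mathrm{Ord}$ with seed $s:=[\mathrm{id}]_U<j_U(\delta)$, and membership in $U$ is detected inside this ordinal ultrapower: for $A\subseteq\delta$, $A\in U$ iff $[\chi_A]_U$ equals the class of the constant function $1$ in ${}^\delta 2/U$ --- a choice-free statement, as it only inspects functions into ordinals. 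Moreover, when $j(U)=U$ and $j(\delta)=\delta$ the map $j$ descends to an $\in_U$-order-embedding $\hat{j}$ of ${}^\delta\delta/U$ into itself with $\hat{j}\circ j_U=j_U\circ j$ on $[0,\delta)$ and $\hat{j}(s)=s$, so that $Fix(j)\cap\delta\in U$ becomes the statement that $\hat{j}$ fixes $j_U(\gamma)$ for $U$-many $\gamma$.

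To get the embedding itself --- the crux --- I would use that $\kappa$ is almost supercompact. By Theorem~\ref{Fact:AlmostSCWellOrderable} the family $\mathcal U_\delta$ of all $\kappa$-complete ultrafilters on $\delta$ is well-orderable; choosing the embeddings under consideration to fix the ordinal $\kappa$ (as well as $\delta$) makes $\mathcal U_\delta$ setwise fixed, since such an embedding sends $\kappa$-complete ultrafilters on $\delta$ to $\kappa$-complete ultrafilters on $\delta$. One then codes a well-order of $\mathcal U_\delta$, the index of $U$ in it, the seed $s$, and the remaining ordinal data $\kappa,\delta,p$ (plus enough of the well-founded ordinal ultrapower) into a single ordinal, and applies Remark~\ref{RMK:Fixing-p} to obtain $j$ fixing this package; $j(U)=U$ follows, and the alignment of $Fix(j)\cap\delta$ with $U$ is to be forced by the position in which $s$ sits relative to $\hat j$. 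For a merely well-founded $U$ one first needs a further argument that the relevant orbit of $U$ (or a canonical well-order of it) is available, using that the embeddings preserve well-foundedness; this is all the coding step requires.

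The step I expect to be the real obstacle is exactly this reduction: $U$ itself and a well-order of $\mathcal U_\delta$ are sets of high rank, and a rank Berkeley embedding fixes finite tuples of ordinals, not arbitrary such sets --- so the hypotheses on $\kappa$ must be used to replace ``fix this set'' by ``fix this ordinal, together with a well-orderable family that is already setwise fixed for ordinal-parameter reasons'' --- and, in tandem, one has to verify that the coding can be arranged so as to deliver a $U$-measure-one set of honest fixed points below $\delta$, rather than merely fixed points of the ultrapower map $\hat j$ (which, as the club example shows, is strictly weaker). Everything is routed through the well-founded ordinal ultrapower precisely because, without choice, one cannot make coordinate-by-coordinate selections over $\delta$-indexed families of subsets of $\delta$, so the information about $U$ must be reduced to ordinal and well-orderable data before the rank Berkeley property can be brought to bear.
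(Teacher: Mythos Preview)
The paper does not give its own proof of this statement: Theorem~\ref{Prop:Fix(j)InU} is quoted from \cite{goldberg:choicelesscardinals} and is used as a black box. So there is nothing in the paper to compare your argument against.

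That said, your proposal is not a proof but a plan, and the plan has a real gap at exactly the point you flag. Getting $j(U)=U$ from ordinal data is reasonable: the well-orderability of $\kappa$-complete ultrafilters on $\delta$ (Theorem~\ref{Fact:AlmostSCWellOrderable}) lets you code $U$ by an ordinal index once you have fixed $\kappa$ and $\delta$, and the fixing-finite-tuples device of Remark~\ref{RMK:Fixing-p} then pins down $U$. The reduction from ``well-founded'' to ``$\kappa$-complete'' you gesture at is genuine extra work, but not the main issue.

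The main issue is the passage from $j(U)=U$ to $Fix(j)\cap\delta\in U$. Your proposed mechanism --- ``force the alignment by the position of $s=[\mathrm{id}]_U$ relative to $\hat{j}$'' --- does not do anything. Once $j(U)=U$ you automatically get $j(s)=s$ (since $s$ is definable from $U,\delta$) and $\hat{j}([\mathrm{id}]_U)=[j(\mathrm{id}_\delta)]_U=[\mathrm{id}_\delta]_U$; neither of these needs to be arranged, and neither implies $[\,j\upharpoonright\delta\,]_U=[\mathrm{id}]_U$, which is the actual content of $Fix(j)\cap\delta\in U$. In other words, the induced map $\hat{j}$ on equivalence classes fixing the class of the identity is a tautology and says nothing about whether $j$ itself fixes $U$-many points of $\delta$. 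Your own caveat that one must obtain ``honest fixed points below $\delta$, rather than merely fixed points of the ultrapower map $\hat{j}$'' is exactly right, and your outline supplies no device for crossing that gap. You will need a further idea here --- this is where the substance of Goldberg's argument lies --- not more coding.
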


We are ready to prove the main result of this section, Theorem \ref{THM:Main1}.
\begin{proof}(Theorem \ref{THM:Main1})\\
We may assume without loss of generality that $U$ is an ultrafilter on a cardinal $\delta$.
    By Theorem
    \ref{Prop:Fix(j)InU},  there is some $\eta$ such that  $j : V_\eta \to V_\eta$ fixes $\delta$, $\gamma$ and $U$, and $Fix(j) \cap dom(U) \in U$. We are therefore done by Corollary \ref{COR:gammaModUIsSmall}.
    \end{proof}

Recall the definition of $(\kappa,\rho)$-regular ultrafilters.
\begin{definition}\label{Def:RegularUltrafilters}
An ultrafilter $U$ is $(\kappa,\rho)$-regular for some $\kappa < \rho$ if it is $\kappa$-complete and there is a sequence $\la A_i \mid i < \rho\ra \subseteq U$ such that for every $I \in [\rho]^{\kappa}$, $\bigcap_{i \in I} A_i = \emptyset$.
\end{definition}

Using Theorem \ref{THM:Main1}, we can now prove 
Theorem \ref{THM:Main2}, showing there are no regular $(\kappa,\rho)$ ultrafilters with a long gap between $\kappa$ and $\rho$.

\begin{proof}(Theorem \ref{THM:Main2})\\
Suppose otherwise. Let $U,X$ give a counter example, and  
$\la A_i \mid i < \rho \ra \subseteq U$ be a witness for $U$ being $(\kappa,\rho)$-regular. 

Define for each $\nu < \rho$ the function $f_\nu : X \to \kappa$ by 
\[f_\nu(x) = otp\left(\{ i < \nu \mid x \in A_i\}\right).\]
For every $\nu < \mu < \rho$ we have  $f_\nu <_U f_\mu$ since for every $x \in A_\nu \in U$, the set
$\{ i < \mu \mid x \in A_i\}$ is an end extension of the set $\{ i < \nu \mid x \in A_i\} \cup \{ \nu\}$, and therefore
\[
f_\mu(x) = otp\left(\{ i < \mu \mid x \in A_i\}\right) \geq otp\left(\{ i < \nu \mid x \in A_i\}\right) + 1  = f_\nu(x)+1.
\]
Therefore,  $\la f_\nu \mid \nu < \rho\ra$ is a witness that $|\kappa^X/U| \geq \rho$, contradicting Theorem \ref{THM:Main1}.
\end{proof}

\section{Tree Prikry Forcing $\po(U)$ in the choiceless setting}\label{Section:ChoicelessPrikry}

In this section, we develop a form of Tree Prikry forcing for use in the choiceless context, and describe a number of its properties. In particular, we prove a weak version of the strong Prikry property, and a Mathias-like characterization of generic sequences. Combining these results with the results of the previous section, we are able to analyze which cardinals will be preserved by forcings of this type.

As an aside, it is worth highlighting one difficulty of working with Prikry forcing without choice. One very useful way to analyze Prikry generics is through their connection with iterated ultrapowers; this connection can fail in the choiceless context, because the ultrapowers in question may not be extensional.

\begin{definition}
Let $U$ be an ultrafilter on a set $X$. The tree Prikry forcing $\po(U)$ consists of pairs $p = (s,T)$ where $s \in X^{<\omega}$ is a finite sequence and $T \subseteq X^{<\omega}$ is a \textbf{$U$-large tree}, meaning that $\la \ra \in T$, and for every $t \in T$, 
$$\suc_T(t) := \{ x \mid t \fr \la x \ra \in T\} \in U.$$

For every $r \in T$ we write
$$T_{r} := \{ t \in X^{<\omega} \mid r \fr t \in T\}.$$

For $p = (s,T)$ we sometimes denote $s = s^p$ and $T = T^p$.

A condition $q$ is a direct extension of $p$, denoted $q \leq^* p$, if $s^q = s^p$ and $T^q \subseteq T^p$. \\
A condition $q$ is a one-point extension of $p$ if there is some $x \in \suc_{T^p}(\la \ra)$ such that 
$s^q = s^p \fr \la x\ra$ and $T^q = T^p_{\la x\ra}$.
In this case, we write $q = p \fr \la x\ra$.\\
A condition $q$ extends $p$, denoted $q \leq p$ if it obtained from $p$ by finitely many one point extensions and direct extensions. 
\end{definition}

\begin{remark}\label{RMK:Homog}
    Similar to the tree Prikry forcing in ZFC, it is clear that the poset $\po(U)$ is homogeneous. Indeed, given conditions $(s,T),(s',T')$ there is a cone isomorphism $\pi$ between the cones over their direct extensions $(s,T \cap T')$, $(s',T \cap T')$, given by $\pi((s \fr t, R)) = (s' \fr t, R)$.
\end{remark}

Before we begin proving properties of this poset, we define measures on $X^n$ that correspond to the set of maximal branches through a $U$-large tree. We follow the presentation from \cite{GoldbergProduct}.

\begin{definition}(Ultrafilter Tensors / Fubini products)
    \begin{enumerate}
        \item Let $U_1,U_2$ be two ultrafilters on sets $X_1,X_2$ respectively. 
        For a set $Z \subseteq X_1 \times X_2$ and $x_1 \in X_1$ let $Z_{x_1} = \{ x_2 \in X_2 \mid (x_1,x_2) \in Z\}$. \\
        Define the (left) tensor product ultrafilter $U_1 \ltimes U_2$ (also known as the Fubini product of $U_1$ and $U_2$) on $X_1 \times X_2$ by 
        $$
        Z \in U_1 \ltimes U_2 \iff \{ x_1 \mid Z_{x_1} \in U_2\} \in U_1.
        $$

        \item For an ultrafilter on a set $U$ we define its finite (left) powers $U^n$ on $X^n$, for $n \geq 1$ by $U^1 = U$ and $U^{n+1} = U^n \ltimes U$.
    \end{enumerate}
\end{definition}

The following basic properties are straightforward consequences of the definition. 
\begin{remark}\label{Remark:ProductBasics}${}$
    \begin{enumerate}
         \item For ultrafilters $U_1,U_2$, the tensor $U_1 \ltimes U_2$ contains the product filter $U_1 \times U_2 = \{ Z_1 \times Z_2 \mid Z_1 \in U_1 \text{ and } Z_2 \in U_2\}$.
         
        \item If $U_1,U_2$ are $\kappa$-complete for some $\kappa$ then so is $U_1\ltimes U_2$.

        \item For an ultrafilter $U$ on a set $X$, we get by a straightforward induction on $n \geq 1$, that a set $Z \subseteq X^n$ belongs to $U^n$ iff there is a tree $S \subseteq T^{\leq n}$ such that 
        \begin{itemize}
            \item $\emptyset \in S$
            \item for every $t \in S$ of length $|t| < n$ the set $\suc_S(t) \in U$
            \item $Z =S \cap \kappa^n$ is the set of maximal branches in $S$.
        \end{itemize}
    \end{enumerate}
\end{remark}

\begin{lemma}\label{Lemma:WSPrikry}(Weak Strong Prikry Property)\\
Suppose 
$U$ is $\sigma$-complete.
For every condition $p \in \po(U)$ and $D \subseteq \po(U)$ which is dense open below $p$, there are $q \leq^* p$ and $n < \omega$ such that for every $t \in T^{q}$ of length $|t| \geq n$ there is a sub-tree $R_t \subseteq T^q_t$ such that $(s^p \fr t, R_t)$ is a direct extension of $q\fr t:= (s^p \fr t, T^q_t)$ and belongs to $D$. Moreover, the direct extension $q$ and the parameter $n$ are determined canonically from $p$ and $D$.
\end{lemma}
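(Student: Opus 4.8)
The plan is to adapt the standard tree-Prikry fusion argument so that every choice is replaced by an explicit, canonical recursion, and to use the $\sigma$-completeness of $U$ precisely at the point where one needs the relevant rank to be \emph{finite} rather than merely an ordinal.

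First I would fix, for a node $t \in T^p$, the notion that $t$ is \emph{good} if there is a $U$-large tree $R \subseteq T^p_t$ with $(s^p \fr t, R) \in D$. Two elementary observations do most of the work. (i) Since $D$ is dense below $p$, applying density to the condition $(s^p \fr t, T^p_t) \leq p$ produces a good descendant of $t$; more generally, density below any $(s^p \fr t, S) \leq p$ with $S \subseteq T^p_t$ a $U$-large tree shows some node of $S$ is good. (ii) Since $D$ is open: if $t$ is good then $\{ x \in \suc_{T^p}(t) \mid t \fr \la x \ra \text{ is good}\} \in U$ (a one-point extension of a witness for $t$ witnesses goodness of $t \fr \la x \ra$); and goodness of $t$ is inherited by every $U$-large subtree $T' \subseteq T^p$ with $t \in T'$, since if $R$ witnesses $t$ good then $R \cap T'_t$ is $U$-large and $(s^p \fr t, R \cap T'_t) \leq^* (s^p \fr t, R) \in D$ still lies in $D$.

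Next I would introduce the canonical $U$-rank $\rho : T^p \to \Ord \cup \{\infty\}$, characterised by: $\rho(t) = 0$ iff $t$ is good, and for $\rho(t) > 0$, $\rho(t)$ is the least $\gamma$ with $\{ x \in \suc_{T^p}(t) \mid \rho(t \fr \la x \ra) < \gamma\} \in U$ (and $\rho(t) = \infty$ if no such $\gamma$ exists); rigorously, $\rho(t)$ is the least stage at which $t$ enters the increasing hierarchy $W_0 = \{\text{good nodes}\}$, $W_\gamma = \bigcup_{\beta < \gamma} W_\beta \cup \{ t \mid \{ x \mid t \fr \la x \ra \in \bigcup_{\beta<\gamma} W_\beta\} \in U\}$. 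The key step — which I expect to be the main obstacle — is to show $\rho(\la \ra) < \omega$ (indeed $\rho(t) < \omega$ for all $t$). This is exactly where $\sigma$-completeness enters: if $\rho(t) \notin \omega$ then $\{ x \mid \rho(t \fr \la x \ra) > n \} \in U$ for every $n < \omega$, so by $\sigma$-completeness $\{ x \mid \rho(t \fr \la x \ra) \notin \omega\} \in U$; iterating this canonically gives a $U$-large subtree $S \subseteq T^p_t$ none of whose nodes is good, contradicting observation (i) applied to $(s^p \fr t, S) \leq p$. Note that $\rho$ and $S$ are produced by explicit recursions with no appeal to choice, which is what keeps the entire construction canonical in $p$ and $D$.

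Finally I would set $n := \rho(\la \ra) < \omega$ and define $T^q \subseteq T^p$ canonically: $\la \ra \in T^q$, and for $t \in T^q$ set $\suc_{T^q}(t) := \{ x \in \suc_{T^p}(t) \mid \rho(t \fr \la x \ra) < \rho(t)\}$ when $\rho(t) > 0$, and $\suc_{T^q}(t) := \{ x \in \suc_{T^p}(t) \mid \rho(t \fr \la x \ra) = 0\}$ when $\rho(t) = 0$; both sets are in $U$ — the first by the defining property of $\rho$, the second by observation (ii) — so $q := (s^p, T^q) \leq^* p$ is a condition determined canonically from $p$ and $D$. Along every branch of $T^q$ the value of $\rho$ strictly decreases while it is positive and is locked at $0$ once it reaches $0$; since it starts at $\rho(\la \ra) = n$, we get $\rho(t) = 0$ for every $t \in T^q$ with $|t| \geq n$. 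For each such $t$, observation (ii) (applied with $T' = T^q$) gives a $U$-large $R_t \subseteq T^q_t$ with $(s^p \fr t, R_t) \in D$, and $R_t \subseteq T^q_t$ is precisely the statement that $(s^p \fr t, R_t)$ is a direct extension of $q \fr t$. This yields the conclusion, with $q$ and $n$ read off canonically from $\rho$.
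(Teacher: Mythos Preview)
Your argument is correct and takes a genuinely different route from the paper. The paper homogenizes level by level: for each $m$ it uses that $U^m$ is an ultrafilter to find a $U^m$-large set $Z_m$ of length-$m$ nodes on which the predicate ``has a direct extension in $D$'' is constant, intersects the resulting trees using $\sigma$-completeness to get $q$, then takes the least $n$ where the constant value is $1$ (density gives existence; openness propagates from level $n$ to $n+1$). Your proof instead assigns to each node a Galvin--Prikry style $U$-rank $\rho$ measuring how far it is from being good, uses $\sigma$-completeness in a single sharp step to rule out $\rho(t)\geq\omega$ (by building a $U$-large subtree of uniformly infinite rank and invoking density), and then prunes to the tree of rank-decreasing successors. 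Your approach avoids any explicit mention of the tensor powers $U^m$ and packages the induction on levels into the rank function; the paper's approach is more hands-on but makes the connection to $U^m$ (used elsewhere in the paper, e.g.\ Theorem~\ref{THM:P(U)andU^n}) visible from the start. Note that the canonical pair $(q,n)$ you produce will in general differ from the paper's, but both are explicit functions of $p$ and $D$, which is all the lemma requires.
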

\begin{proof}
    Let $p = (s^p,T^p) \in \po(U)$, and let $D\subseteq \po(U)$ be dense open below $p$. We build a sequence of trees $T_k$ recursively, such that either $p\fr t$ has a direct extension in $D$ for all $t \in T_k$ of length $k$, or this holds for no $t \in T_k$ of length $k$.

    For each $n$, define the following function on the set $T_n$ of branches $t\in T^p$ of length $|t| = n$:
    \[f_n(t) = \begin{cases}
        1 & \text{ if } p\fr t \text{ has a direct extension in }D\\
        0 & \text{ otherwise }
    \end{cases}\]
    Since $T$ is a $U$-large tree, by Remark \ref{Remark:ProductBasics}, $T_n \in U^n$, so there must be a $U^n$-large subset $Z_n$ such that $f_n$ is constant on $Z_n$. By the same remark, $Z_n$ is the set of maximal branches in a $U$-large tree $T_n \subseteq (T^p)^{\leq n}$. For each $n$, let $T^*_n$ be the restriction of $T^p$ to nodes that extend $T_n$.
    
    Let $T = \bigcap_{n<\omega} T^*_n$, and let $q = (s^p, T)$. By construction, for each fixed $n$, either all $t \in T$ of length $n$ have a direct extension in $D$ or all $t\in T$ of length $n$ have no direct extensions in $D$.

    First, we claim that there is some $n$ such that for every $t \in T$ of length exactly $n$, $q\fr t$ has a direct extension in $D$. That is, we wish to show that there is some $n$ such that $f_n(t) = 1$ for all $t\in Z_n.$ If not, then for every $n$, there are no $n$-step extensions of $q$ with a direct extension in $D$. Since every extension of $q$ can be decomposed into an $n$-step extension followed by a direct extension, this contradicts the density of $D$.
    
    Now, suppose that every $n$-step extension of $q$ has a direct extension in $D$. We wish to show that every $n+1$-step extension of $q$ also has a direct extension in $D$. Let $t\in T$ with length $|t| = n$. By assumption, for each $t \in T$ of length $n$, there is a subtree $R_t \subseteq T$ with $(s^p\fr t, R_t) \in D$. Since $D$ is dense open, if $x \in \suc_{R_t}(t)$, then $(q \fr  t, R_t) \fr \la x\ra$ must also be in $D$. Since $\suc_{R_t}(t)$ and $\suc_T(t)$ are both in $U$, their intersection will also be in $U$, and in particular will be nonempty. So there is some $x \in \suc_T(t)$ such that $q\fr t \fr \la x \ra$ has a direct extension in $D$. But by construction, since there is one $x$ with this property, every $x$ must have this property.

    To finish the proof, we note that every parameter is definable except the direct extensions $R_t$. Each set $Z_n$ is definable from the function $f$ and the measure $U^n$. The definition of the condition $q = (s^p, T)$ only uses these sets $Z_n$, and we can choose the least $n$ satisfying the statement of the lemma.

\end{proof}

Although weaker than the usual version of the Strong Prikry Property, Lemma \ref{Lemma:WSPrikry} suffices to get the standard Prikry Property. 
\begin{corollary}\label{COR:PrikryProperty}
Let $U$ be a $\sigma$-complete ultrafilter on a set $X$. For every $p \in \po(U)$ and a statement $\sigma$ in the forcing language of $\po(U)$, there is a direct extension $p^*$ of $p$ that decides $\sigma$.
\end{corollary}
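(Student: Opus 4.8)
The plan is to carry out the standard reduction of the Prikry property to a strong Prikry property, using the weak form in Lemma~\ref{Lemma:WSPrikry}, while keeping track that every thinning step costs only a single application of the ultrafilter $U$ and no choice function. Fix $p$ and a statement $\sigma$, and let $D\subseteq\po(U)$ be the set of conditions that decide $\sigma$. It is open (any extension of a condition deciding $\sigma$ decides it the same way) and dense below every condition (a condition not deciding $\sigma$ has an extension forcing $\sigma$ and one forcing $\neg\sigma$). Apply Lemma~\ref{Lemma:WSPrikry} to $p$ and $D$ to get $q\leq^* p$ and $n<\omega$ such that $q\fr t$ has a direct extension in $D$ for every $t\in T^q$ with $|t|\ge n$. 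The corollary then follows from the following claim, proved by induction on $m<\omega$: \emph{if $r$ is a condition such that $r\fr t$ has a direct extension in $D$ for every $t\in T^r$ with $|t|\ge m$, then $r$ has a direct extension $r^*\leq^* r$ with $r^*\in D$, and this $r^*$ is obtained from $r$ and $\sigma$ by a choice-free definition.} Applying the claim to $q$ with $m=n$ gives $p^*:=r^*\leq^* q\leq^* p$ deciding $\sigma$.

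For the base case $m=0$, the hypothesis provides a direct extension of $r\fr t$ in $D$ for \emph{every} $t\in T^r$. Any two direct extensions of a fixed condition that lie in $D$ are compatible (intersect their $U$-large trees), so they decide $\sigma$ alike; hence there is a definable $d\colon T^r\to\{0,1\}$ with $d(t)=1$ iff the direct extensions of $r\fr t$ in $D$ force $\sigma$. A direct extension $(s^r\fr t,R)\in D$ of $r\fr t$ induces, for every $x\in\suc_R(\langle\rangle)\in U$, a direct extension of $r\fr t\fr\langle x\rangle$ in $D$ deciding $\sigma$ the same way, so $A_t:=\{x\in\suc_{T^r}(t):d(t\fr\langle x\rangle)=d(t)\}\in U$. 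Let $r^*=(s^r,T^*)$, where $T^*$ is obtained from $T^r$ by thinning, at each node $t$, the successor set to $A_t$; then $T^*$ is $U$-large (each successor set is an $A_t\in U$), it is built with no choice, and $d$ is constant on $T^*$ with some value $i^*$. If (say) $i^*=1$ and some $w\leq r^*$ forces $\neg\sigma$, write the stem of $w$ as $s^r\fr v$ with $v$ a branch of $T^*$; then $w$ is a direct extension of $r^*\fr v\leq^* r\fr v$, so $w$ shares the stem of, and is therefore compatible with (via tree intersection), a direct extension of $r\fr v$ in $D$ forcing $\sigma$ (as $d(v)=i^*=1$) — a contradiction. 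The case $i^*=0$ is symmetric, so $r^*\in D$.

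For the inductive step $m=k+1$: for $x\in\suc_{T^r}(\langle\rangle)$ the condition $r\fr\langle x\rangle$ satisfies the hypothesis for parameter $k$ (since $(r\fr\langle x\rangle)\fr t'=r\fr(\langle x\rangle\fr t')$ and lengths increase by one), so the induction hypothesis produces — uniformly in $x$, because the procedure is definable — a direct extension $g(x)\leq^* r\fr\langle x\rangle$ in $D$ with some decision $e(x)\in\{0,1\}$ of $\sigma$. As $U$ is an ultrafilter, exactly one of the two sets $\{x:e(x)=i\}$, $i\in\{0,1\}$, belongs to $U$; call it $A$, with value $i^*$. Set $r^*=(s^r,T^*)$ with $\suc_{T^*}(\langle\rangle)=A$ and $T^*_{\langle x\rangle}=T^{g(x)}$ for $x\in A$; this is $U$-large, built with no choice, and $r^*\leq^* r$. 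If $i^*=1$ and some $w\leq r^*$ forces $\neg\sigma$: when $s^w=s^r$, pick $x\in A\cap\suc_{T^w}(\langle\rangle)$, so $w\fr\langle x\rangle\leq^* g(x)\in D$ forces $\sigma$, contradicting $w\fr\langle x\rangle\leq w$; when $s^w$ properly extends $s^r$, its first entry past $s^r$ lies in $A$ and $w\leq r^*\fr\langle x\rangle=g(x)\in D$ forces $\sigma$, again a contradiction. The case $i^*=0$ is symmetric, so $r^*\in D$, finishing the induction and the corollary.

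The step I expect to be the genuine obstacle is making the whole induction choice-free, and the danger sits in the inductive step: one seems forced to pick, for each $x$ in the $U$-large set $\suc_{T^r}(\langle\rangle)$, one among the many direct extensions of $r\fr\langle x\rangle$ in $D$ before gluing, and $\suc_{T^r}(\langle\rangle)$ need not be well-orderable. The remedy is to insist that the witness $r\mapsto r^*$ be \emph{explicitly definable}, which pushes the work onto the base case: there one must bridge from ``$r\fr t$ has a direct extension in $D$'' to ``$r\fr t\in D$'', and this is done not by choosing a witness subtree below each node but by the canonical thinning to the sets $A_t$, which is available precisely because $U$ is a filter. The $\sigma$-completeness of $U$ is used only inside Lemma~\ref{Lemma:WSPrikry}.
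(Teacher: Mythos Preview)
Your argument is correct. Both your proof and the paper's start from Lemma~\ref{Lemma:WSPrikry}, but the two diverge in how the parameter $n$ is eliminated. The paper works \emph{horizontally}: for each level $m\ge n$ it uses the tensor power $U^m$ to homogenize the decision map $\varepsilon$ on $T^*\cap X^m$, then intersects over all $m$ (invoking the $\sigma$-completeness of $U$ once more) to obtain a single tree $T^{**}$ on which every long enough stem decides $\sigma$ the same way; a short compatibility argument then shows $(s,T^{**})$ itself decides. You instead work \emph{vertically} by induction on $n$: you peel off the first coordinate, apply the inductive hypothesis uniformly to each one-point extension $r\fr\langle x\rangle$ (this is where the canonicity clause of Lemma~\ref{Lemma:WSPrikry} is doing the work), and glue. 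Your base case~$m=0$ is a neat fusion argument that replaces the paper's use of $U^m$ by a single pass through the tree using only $U$.

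What each approach buys: the paper's route is shorter and leans on the machinery of tensor powers already set up in Remark~\ref{Remark:ProductBasics}, at the cost of a second use of $\sigma$-completeness. Your route is more elementary (no $U^m$, and, as you note, $\sigma$-completeness is confined to Lemma~\ref{Lemma:WSPrikry}), but it requires the explicit bookkeeping that the witness $r\mapsto r^*$ be definable so that the inductive step does not need a choice function on the successor set. Your closing paragraph correctly identifies this as the genuine obstacle and correctly explains why the base-case thinning via the sets $A_t$ resolves it.
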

\begin{proof}
By the weak Strong Prikry property, for every condition $p = (s,T)$ there is a direct extension $p^* = (s,T^*)$ and $n < \omega$ such that for every $t \in T^*$ of length $|t| \geq n$ there is a $U$-large subtree $T^t \subseteq T^*_t$ such that $(s \fr t, T^t)$ decides $\sigma$. For each such $t$ let $\varepsilon(t) \in 2$ be $0$ if 
$(s \fr t, T^t) \Vdash \sigma$  and $\varepsilon(t) = 1$ if 
$(s \fr t, T^t) \Vdash \neg\sigma$. 
For each $m \geq n$, the $m$-th level of $T^*$,  $T^* \cap X^m$ belongs to the tensor power ultrafilter $U^m$, and therefore there are subsets $B_m \subset T^* \cap X^m$, $B_m \in U^m$ and $\varepsilon_m < 2$ such that $\varepsilon(t) = \varepsilon_m$ for each $t \in B_m$. 
Let $T^{**} \subseteq T^*$ be  sub-tree given by 
\[
T^{**} = \{ t \in T^* \mid \forall m \geq n,len(t) \ \exists t' \in B_m \ t = t'\uhr len(t)\}
\]
$T^{**}$ is a $U$-large tree since $U^m$ is $\sigma$-closed for all $m \geq 1$.

Note that for each $t \in T^{**} \cap X^n$  the $U$-large trees $T^t$ and $T^{**}_t$ intersect, and for every $t'$ in their intersection of  some length $len(t') = m$, the conditions $(s \fr t', T^{t'})$ and $(s \fr t \mid T^t)$ are compatible, and therefore 
$\varepsilon(t') = \varepsilon(t)$. Hence $\varepsilon_m = \varepsilon_n$ for all $m \geq n$.

Let $p^{**} = (s,T^{**})$ be the resulting direct extension of $p^*$.
We claim that $p^{**}$ decides $\sigma$ according to $\varepsilon_n$. 
Say $\varepsilon_n = 0$; we claim that $p^{**} \Vdash \sigma$ (the proof for the case $\varepsilon_n = 1$ and $p^{**} \Vdash \neg \sigma$ is similar). Indeed, let $q = (s\fr t, R)$ be an extension of $p^{**}$ . By extending $q$ several points from $R$ if needed, we may assume that $len(t) \geq n$.
We get that $q$ is compatible with the condition $(s \fr t, T^t)$ which forces $\sigma$. We conclude that the set of extensions of $p^{**}$ that forces $\sigma$ is pre-dense below $p^{**}$. Hence $p^{**} \Vdash \sigma$.
\end{proof}

Recall that an ultrafilter $U$ is $Y$-closed for a set $Y$ if every intersection of sets $\{ A_y \mid y \in Y\} \subseteq U$ belongs to $U$. A standard application of the Prikry property is that if $U$ is sufficiently closed, $\po(U)$ does not add new small sets. In particular, we have:
\begin{corollary}\label{COR:NoNewSmallSets}
    Suppose that $\qo$ is a poset, $Y$ is a set, and $U$ is an ultrafilter that is $\qo \times Y$-closed. Then for every $\qo \times \po(U)$-name $\tau$ for a subset of $Y$, and every condition $p \in \po(U)$ there is a direct extension $p'$ of $p$ and a $\qo$-name $\tau'$ for a subset of $Y$ such that $(1_{\qo},p') \Vdash \tau = \tau'$.
\end{corollary}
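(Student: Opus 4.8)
The plan is to combine the Prikry property (Corollary \ref{COR:PrikryProperty}) with the $Y$-closure of $U$, working over the product poset $\qo \times \po(U)$. The key observation is that the product $\qo \times \po(U)$ can be rearranged: fixing a condition $(r,(s,T)) \in \qo \times \po(U)$, direct extensions in the $\po(U)$-coordinate are still available, and since the map $(q, x) \mapsto$ ``$q$ decides $x \in \tau$'' has only two possible outcomes for each $x \in Y$, we can hope to absorb the undecided cases into a single direct extension using the closure of $U$. First I would set up the forcing language over $\qo \times \po(U)$ and, for each $y \in Y$, consider the statement $\sigma_y :\equiv$ ``$\check y \in \tau$''. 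I would like to apply the Prikry property uniformly in $y$, but Corollary \ref{COR:PrikryProperty} only gives a single direct extension per statement, so the main work is to make these choices cohere over all $y \in Y$ simultaneously.

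The key steps, in order: (1) Note that $\po(U)$ satisfies the Prikry property relative to the product with any poset $\qo$; more precisely, running the argument of Lemma \ref{Lemma:WSPrikry} and Corollary \ref{COR:PrikryProperty} over the forcing language of $\qo \times \po(U)$ (treating $\qo$-conditions as additional parameters) shows that for a fixed $\qo$-condition $r$ and a fixed $\po(U)$-condition $p$, and a fixed statement $\sigma$, there is a direct extension $p^*$ of $p$ (in fact canonically determined, by the ``moreover'' clauses) such that $r$ does not help $\po(U)$ decide $\sigma$ beyond what $p^*$ already does — i.e. the set of $(r', p')$ with $r' \leq r$, $p' \leq p^*$ deciding $\sigma$ is such that $p^*$ itself, together with some $r' \leq r$, decides $\sigma$, and moreover the decision depends only on the $\qo$-part in a way nameable by a $\qo$-name. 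The canonicity in Lemma \ref{Lemma:WSPrikry} is what makes this uniform: the direct extension $p^*$ and the level $n$ are definable from $p$ and $\sigma_y$, hence from $p$ and $y$. (2) Using well-orderability — here I would use that $U$ is $\qo \times Y$-closed, so in particular $Y$ is a set and we are working with a set-indexed family — take the ``diagonal'' direct extension $p'$ of $p$ obtained by intersecting the $U$-large trees $T^{*,y}$ over all $y \in Y$; this uses precisely $Y$-closure of $U$ (and $\sigma$-closure for the tree structure) to see that $p'$ is still a condition. (3) With $p' \leq^* p$ fixed, define the $\qo$-name $\tau'$ by: a $\qo$-condition $r'$ forces $\check y \in \tau'$ iff $(r', p'')$ forces $\check y \in \tau$ for the canonical direct extension $p''$ of $p'$ relative to $\sigma_y$ — and argue, using step (1), that $p'' = p'$ works, so that the membership of $y$ in $\tau'$ below $r'$ is read off directly from $(r',p')$. (4) Verify $(1_\qo, p') \Vdash \tau = \tau'$: given any extension $(r', (s^{p'}\fr t, R))$, extend finitely in the stem to a condition still below $p'$ in the direct-extension sense past the relevant level, use step (1) to see its decision about $\check y \in \tau$ agrees with that of $(r', p')$, hence with $\tau'$.

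I expect the main obstacle to be step (1)–(3): carefully checking that the Prikry property for $\po(U)$ genuinely ``factors out'' of the product $\qo \times \po(U)$, i.e. that a $\qo$-condition cannot be used to gain extra deciding power in the $\po(U)$-coordinate that a direct extension alone would not provide. This is the standard ``Prikry property is preserved by products with arbitrary forcing'' phenomenon, and it hinges on re-running the proof of Lemma \ref{Lemma:WSPrikry} with the $\qo$-coordinate carried along as a silent parameter — the delicate point without choice is that the constancy-finding steps (``$f_n$ is constant on a $U^n$-large set'') must still be available, which they are, since those use only that $U^n$ is an ultrafilter and the relevant partition is definable, and definability is preserved when $\qo$-conditions are added as parameters. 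A secondary subtlety is ensuring that $\tau'$ is genuinely a $\qo$-name (not requiring a choice of the trees $R_t$): this is handled by the canonicity clause in Lemma \ref{Lemma:WSPrikry}, which guarantees the data defining the decision is definable from $p'$ and $y$ alone.
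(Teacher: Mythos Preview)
Your overall strategy --- use the Prikry property plus closure of $U$ so that a single direct extension pins down the $\qo$-name --- is correct and is what the paper has in mind (the paper gives no proof, treating the corollary as a standard consequence of Corollary \ref{COR:PrikryProperty}). But there is a gap in your indexing: you intersect trees only over $y \in Y$, invoking ``precisely $Y$-closure'', whereas the hypothesis is $\qo \times Y$-closure, and the full strength is needed. Your step (1) tries to produce, for each $y$, a single direct extension $p^*_y$ after which ``$\check y \in \tau$'' is decided by the $\qo$-coordinate alone. A single application of the Prikry property to a product-forcing statement does not achieve this: if you fix $r \in \qo$ as you describe, the resulting $p^*$ depends on $(r,y)$, not just on $y$; if instead you take $r = 1_\qo$, then $(1_\qo, p^*)$ certainly need not decide ``$\check y \in \tau$'', and the claim that ``the decision depends only on the $\qo$-part in a way nameable by a $\qo$-name'' is exactly the thing to be proved --- it amounts to deciding which subset of $\qo$ forces $\check y \in \tau$, and that already requires $|\qo|$-many applications of the Prikry property, hence $\qo$-closure.

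The clean argument indexes by pairs from the start. View $\tau$ as a $\po(U)$-name $\dot\sigma$ for a nice $\qo$-name for a subset of $Y$, i.e., for a subset of $\qo \times Y$. For each $(q,y) \in \qo \times Y$, apply the ordinary Prikry property (Corollary \ref{COR:PrikryProperty}) to the $\po(U)$-statement ``$(\check q, \check y) \in \dot\sigma$'' to obtain a canonical direct extension deciding it. Intersect all the resulting trees using $\qo \times Y$-closure of $U$ (together with $\sigma$-completeness, to keep the intersection $U$-large at every level). Below the resulting $p'$ all these membership statements are decided, and the decisions assemble into a ground-model nice $\qo$-name $\tau'$ with $(1_\qo, p') \Vdash \tau = \tau'$.
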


Next we prove a theorem that will allow us to connect the results from the previous section to Prikry forcing. 

\begin{theorem}\label{THM:P(U)andU^n}
Suppose that $U$ is a $\sigma$-complete ultrafilter, and $\gamma < \delta$ are cardinals such that $cf(\delta) > \aleph_0$ and $|\gamma^{X^n}/U^n| < \delta$ for all $n < \omega$. Then $\Vdash_{\po(U)}  |\check{\gamma}| < |\check{\delta}|$.
\end{theorem}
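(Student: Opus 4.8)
The plan is to show that in the generic extension there is an injection from $\gamma$ into $\delta$ that need not exist on the ground model but whose range is bounded, by decomposing any potential surjection $f : \gamma \to \delta$ through the finitely many ``stems'' of the Prikry generic and using the Prikry property together with the hypothesis $|\gamma^{X^n}/U^n| < \delta$ to confine its range. More concretely, suppose towards a contradiction that some condition $p$ forces $\dot f$ to be a surjection from $\check\gamma$ onto $\check\delta$. For each $\xi < \gamma$ and each $n < \omega$, consider the values that $\dot f(\xi)$ can take under conditions of the form $(s^p \fr t, R)$ with $|t| = n$; since such a value is decided by some direct extension (Corollary \ref{COR:PrikryProperty}), and since by the Weak Strong Prikry Property (Lemma \ref{Lemma:WSPrikry}) the relevant direct extensions can be chosen uniformly on a $U^n$-large set of stems $t$, we obtain for each $\xi$ and $n$ a function $g_{\xi,n} : X^n \to \delta$ (defined $U^n$-a.e.) recording the decided value. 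The point is that $g_{\xi,n}$ represents an element of $\gamma^{X^n}/U^n$ — wait, not quite, its range is in $\delta$, not $\gamma$.

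So the first real step is to be careful about the codomain. What we actually get is that, below a suitable direct extension $p^* \leq^* p$ (obtained by intersecting the $U$-large trees produced by Lemma \ref{Lemma:WSPrikry} and Corollary \ref{COR:PrikryProperty} over all $\xi < \gamma$; here $\sigma$-completeness and $\cf(\delta) > \aleph_0$ are used to keep the intersection $U$-large and to bound things uniformly), the value $\dot f(\xi)$ below $p^* \fr t$ depends only on the $U^n$-class of $t$, for all $t$ past some level $n_\xi$. Thus for each fixed $n$ the map $t \mapsto \la \dot f(\xi) : \xi < \gamma, n_\xi \le n \ra$ is, modulo $U^n$, a function from $X^n$ into $\delta^\gamma$; but we only need, for each $\xi$ separately, that $t \mapsto \dot f(\xi)$ is (mod $U^n$) a function $X^n \to \delta$. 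Since $\dot f$ is forced to be surjective, every ordinal below $\delta$ is named by $\dot f(\xi)$ below some extension of $p^*$, hence (refining as above) below some $p^* \fr t$; collecting over all $\xi$ and all $t$ in the tree of $p^*$ of a fixed length $n$, the set of ordinals realized this way is the image of $\gamma \times (X^n/U^n)$ under a single function, so it has size $\le |\gamma| \cdot |\gamma^{X^n}/U^n| < \delta$ — using $|X^n/U^n| \le |\gamma^{X^n}/U^n|$ when $\gamma \ge 2$, and $|\gamma| < \delta$. Taking the union over $n < \omega$ and using $\cf(\delta) > \aleph_0$, the total set of ordinals named below some $p^* \fr t$ still has size $< \delta$, contradicting surjectivity.

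The main obstacle is the bookkeeping in the second paragraph: extracting, uniformly in $\xi < \gamma$, a single direct extension $p^*$ below which $\dot f(\xi)$ is ``stabilized'' on a $U^n$-large set of stems, and ensuring the resulting encoding genuinely lands inside a quotient of the form $\gamma^{X^n}/U^n$ (or at least inside a set we have already bounded) rather than something larger. One has to verify that the ``canonical'' direct extension and level promised in Lemma \ref{Lemma:WSPrikry} and Corollary \ref{COR:PrikryProperty} can be amalgamated over the $\gamma$-many statements $\dot f(\xi) = \check\alpha$ without invoking choice — this is exactly where $\sigma$-completeness (to intersect $\omega$-many trees) and $\gamma$ being well-ordered (to intersect $\gamma$-many trees via a single definable recursion, using that each step is canonically determined) are essential. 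Once $p^*$ is fixed, the cardinality count is routine: the range of $\dot f$ forced by $p^*$ is covered by $\bigcup_{n<\omega}$ of sets each of size $< \delta$, and $\cf(\delta) > \aleph_0$ finishes it. Finally one observes $p^* \le^* p \le p$, so $p^*$ still forces $\dot f$ to be a surjection, the desired contradiction; hence no condition forces such an $\dot f$, i.e. $\Vdash_{\po(U)} |\check\gamma| < |\check\delta|$.
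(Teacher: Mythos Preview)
Your proposal has a genuine gap, and the paper's proof avoids it by a simple but crucial reorientation: it works with an injection $\dot f : \delta \to \gamma$ rather than a surjection $\gamma \to \delta$, and it indexes the application of Lemma~\ref{Lemma:WSPrikry} by the \emph{domain} points $\alpha < \delta$ of that injection, not by $\xi < \gamma$.

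The specific failure in your argument is the construction of a single direct extension $p^* \leq^* p$ by ``intersecting the $U$-large trees produced by Lemma~\ref{Lemma:WSPrikry} \ldots over all $\xi < \gamma$.'' You note that $\sigma$-completeness handles $\omega$-many intersections and that a definable recursion over the well-ordered set $\gamma$ avoids choice; but definability of the intersection does not make it $U$-large. With $U$ only $\sigma$-complete and $\gamma$ uncountable, the intersection of $\gamma$-many $U$-large trees need not be $U$-large, and your $p^*$ need not exist. The subsequent counting step is also unclear: the decided values $\nu^\xi_t$ lie in $\delta$, not $\gamma$, so the hypothesis $|\gamma^{X^n}/U^n| < \delta$ does not bound them directly, and the displayed inequality ``$|X^n/U^n| \le |\gamma^{X^n}/U^n|$'' has no obvious meaning here (the $U^n$-quotient of the \emph{set} $X^n$ is not what controls the range).

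The paper instead applies Lemma~\ref{Lemma:WSPrikry} once for each $\alpha < \delta$ to the dense set deciding $\dot f(\check\alpha)$, obtaining canonically a direct extension $p^\alpha \leq^* p$ and an integer $n_\alpha$; no trees are intersected. Since $\cf(\delta) > \aleph_0$, there is a fixed $m$ with $n_\alpha = m$ for $\delta$-many $\alpha$. For each such $\alpha$ the decided-value function $t \mapsto \nu^\alpha_t$ maps $X^m$ into $\gamma$, so the hypothesis $|\gamma^{X^m}/U^m| < \delta$ yields, by pigeonhole, two indices $\alpha \neq \beta$ with $\nu^\alpha_\cdot =_{U^m} \nu^\beta_\cdot$. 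At any $t$ in the ($U^m$-large) intersection of the relevant level sets, compatible direct extensions force $\dot f(\check\alpha) = \dot f(\check\beta)$, contradicting injectivity. The only intersection needed is of \emph{two} trees at the very end. The reorientation from $\gamma$-indexing to $\delta$-indexing is exactly what trades your unavailable $\gamma$-completeness for a pigeonhole argument, and simultaneously fixes the codomain issue you flagged.
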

\begin{proof}
    Suppose otherwise. Take $p \in \po(U)$ and a $\po(U)$-name $\name{f}$ such that 
    $$
    p \Vdash ``\name{f} : \check{\delta} \to \check{\gamma} \text{ is injective.}"
    $$
    For every $\alpha < \delta$ let 
    $$D_\alpha = \{ q \in \po(U)/p \mid \exists \nu < \gamma \  q \Vdash \name{f}(\check{\alpha}) = \check{\nu}\}.$$
    Each $D_\alpha$ is dense open below $p$. By the weak Strong Prikry Property (Lemma \ref{Lemma:WSPrikry}),
    there is a definable map 
    $$
    \alpha \mapsto p^\alpha,n_\alpha
    $$
    such that for each $\alpha < \delta$, 
    $p^\alpha = (s^p,T^\alpha) \leq^* p$ and $n_\alpha < \omega$ such that for every $t \in T^{\alpha}$ of length $|t| = n_\alpha$ there is a direct extension $(s^p \fr t, R^\alpha_t)$ of $p^\alpha \fr t := (t^p \fr t, T^{p^\alpha}_t)$ and some $\nu^\alpha_t < \gamma$ such that
    $$
    (s^p \fr t, R^{\alpha}_t) \Vdash \name{f}(\check{\alpha}) = \check{\nu}^\alpha_t.
    $$
    Note that since every two direct tree extensions $R^\alpha_t,\bar{R}^\alpha_t$ of $T^{p^\alpha}_t$ are compatible, the decided value $\nu^\alpha_t$ is independent of the specific choice of tree $R^\alpha_t$.
    Hence, the map $\alpha,t \mapsto \nu^\alpha_t$ is definable.

    \noindent
    Since $cf(\delta) > \aleph_0$ there is some $m < \omega$ such that 
    $I = \{ \alpha < \delta \mid n_\alpha = m\}$ has size $|I| = \delta$.
    For each $\alpha \in I$ let $Z_\alpha = T^{p^\alpha} \cap X^m$. As pointed out in Remark \ref{Remark:ProductBasics}, $Z_\alpha \in U^m$. Consider for each $\alpha \in I$ the function $f_\alpha : Z_\alpha \to \gamma$ by 
    $f_\alpha(t)= \nu^\alpha_t$.
    Our assumption that $|\gamma^{X^m}/U^m| < \delta$ implies there are distinct $\alpha \neq \beta$ in $I$ such that $f_\alpha =_U f_\beta$. 
    This means there is some $r \in Z_\alpha \cap Z_\beta \in U^m$ and $\nu < \gamma$ such that 
    $$ \nu^\alpha_r = f_\alpha(r) = \nu =  f_\beta(r) = \nu^\beta_r.$$
    Now, $r \in Z_\alpha \cap Z_\beta = T^{p^\alpha} \cap T^{p^\beta} \cap \kappa^m$ and therefore there are suitable sub-trees $R^\alpha_r,R^\beta_r \subseteq T^{p}_r$ such that the resulting conditions 
    $(s^p \fr r, R^\alpha_r)$ and $(s^p \fr r,R^\beta_r)$ are two direct extensions of $p \fr r$, which force,  
    $\name{f}(\check{\alpha}) = \check{\nu}$ and $\name{f}(\check{\beta}) = \check{\nu}$, respectively. 
    As two direct extensions of a single condition are compatible, we conclude there is an extension $q$ of $p\fr r$ such that 
    $$ q \Vdash \name{f}(\check{\alpha}) = \name{f}(\check{\beta}),$$
    which contradicts our initial assumption  $p \Vdash ``\name{f} \text{ is injective.}"$
\end{proof}

By combining Theorem \ref{THM:P(U)andU^n} with Theorem \ref{THM:Main1} we can prove Theorem \ref{THM:Main3}, showing that, for every $\gamma < \rho$ above a rank Berkeley cardinal with $\gamma^\gamma \not\geq^* \rho$, no Prikry forcing $\po(U)$ with a $\sigma$-complete ultrafiler $U$ on a well orderable set can collapse $\rho$ to $\gamma$.

\begin{proof}(Theorem \ref{THM:Main3})\\
    Let $U$ be a $\sigma$-complete well-founded ultrafilter on a well-orderable set, which we may assume to be a cardinal $\delta$.  By Theorem \ref{THM:P(U)andU^n} above, to show that $\po(U)$ does not collapse $\rho$ to $\gamma$ it suffices to verify that 
    $|\gamma^{\delta^n}/U^n| < \rho$ for each finite $n \geq 1$. For this, note that as $U^n$ is $\sigma$-complete well-founded, and its domain is the well-orderable set $\delta^n$. Hence, by Theorem \ref{THM:Main1}, $|\gamma^{\delta^n}/U^n| < \rho$.
\end{proof}

\subsection{Decomposibility and singularizing properties of $\po(U)$}

Results in Apter and Gitik, have further studied the possible configurations of regular/singular cardinals. 
Theorem \ref{THM:Main3} and Corollary \ref{COR:PrikryNoCollapseStong} places some bounds on the cardinals $\rho$ that can be collapsed in a generic extension by a Prikry forcing $\po(U)$ with a $\sigma$-complete ultrafilter $U$. If $U$ is uniform on a regular cardinal $\delta$ then $\po(U)$ singularizes $\delta$, and it is natural to ask about other cardinals that could be singularized by $\po(U)$.
In the standard ZFC setting, it is well-known that the question is related to the discontinuity points of the ultrapower embedding $j_U : V \to M_U \cong Ult(V,U)$ (e.g., Lemma 10 in \cite{Hayut-BD}). 
We say that $j$ is discontinuous at an ordinal $\alpha$ if $\sup(j''\alpha) < j(\alpha)$.
In our choiceless setting, given a well-founded ultrafilter $U$ on a set $X$, we write $\sup_{\gamma < \rho}(\gamma^X/U) < \rho^X/U$. 

An argument similar to the proof of Theorem \ref{THM:Main3}, using the weak strong-Prikry-property (Lemma \ref{Lemma:WSPrikry}), gives a similar results about singularizing $\rho$ in our choiceless setting.

\begin{lemma}
    Suppose that $U$ is a $\kappa$-complete ultrafilter on a set $X$ and $\rho$ is a regular cardinal so that for every $n < \omega$, 
    $\sup_{\gamma < \rho} (\gamma^{X^n}/U^n) < (\rho^{X^n}/U^n)$. Then $\po(U)$ does not add a cofinal sequence to $\rho$ of length $\tau < \kappa$. 
\end{lemma}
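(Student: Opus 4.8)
The plan is to mimic the proof of Theorem \ref{THM:P(U)andU^n}, replacing "injection $\gamma \to \rho$" with "cofinal map $\tau \to \rho$" and exploiting that $\tau < \kappa$ to collapse a $\tau$-indexed family of conditions/values into a single condition using $\kappa$-completeness. Suppose toward a contradiction that $p \Vdash ``\name{f} : \check\tau \to \check\rho$ is cofinal$"$ for some $\tau < \kappa$ and $p \in \po(U)$. For each $\xi < \tau$ the set $D_\xi = \{ q \leq p \mid \exists \nu < \rho\ q \Vdash \name{f}(\check\xi) = \check\nu\}$ is dense open below $p$, so by the weak Strong Prikry Property (Lemma \ref{Lemma:WSPrikry}) we get a \emph{definable} map $\xi \mapsto (p^\xi, n_\xi)$, together with definable decided values $\nu^\xi_t < \rho$ for each $t \in T^{p^\xi}$ of length $n_\xi$, exactly as in Theorem \ref{THM:P(U)andU^n}. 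The crucial new step is to intersect over all $\xi < \tau$ simultaneously: since $U$ (hence each $U^n$) is $\kappa$-complete and $\tau < \kappa$, and since the maps are definable so no choice is needed to form the sequence $\la p^\xi, n_\xi, \nu^\xi_\bullet \mid \xi < \tau\ra$, we may find a single $n < \omega$ with $I = \{\xi < \tau \mid n_\xi = n\}$ of size $\tau$ (using $\cf(\tau) > \aleph_0$ if $\tau$ uncountable — or just that $\tau < \kappa \le$ the first measurable above $\lambda$ is regular; one can also assume $\tau$ regular uncountable, handling $\tau = \omega$ separately or noting $\kappa$-completeness already forbids countable cofinal sequences), and then take $T = \bigcap_{\xi \in I} T^{p^\xi} \cap X^n$, which lies in $U^n$ by $\kappa$-completeness.

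Now fix any node $r$ of length $n$ in this common tree $T$ (it is nonempty since $U^n$ is a proper filter). For each $\xi \in I$ the condition $(s^p \fr r, R^\xi_r)$ decides $\name{f}(\check\xi) = \check\nu^\xi_r$ for some $\nu^\xi_r < \rho$, and these conditions are pairwise compatible as they are all direct extensions of the single condition $p \fr r = (s^p \fr r, T^p_r)$. Hence by a straightforward induction of length $\tau$ (again, definably, using $\kappa$-completeness to intersect the $\tau$-many trees $R^\xi_r$) we obtain a single condition $q = (s^p \fr r, R)$ with $R = \bigcap_{\xi \in I} R^\xi_r \in U^{\,|r|}$-large-tree sense, such that $q \leq p \fr r$ and $q \Vdash \name{f}(\check\xi) = \check\nu^\xi_r$ for every $\xi \in I$. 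But then $q$ forces $\name{f}\res \check\tau$ to be exactly the ground-model function $\xi \mapsto \nu^\xi_r$, whose range $\{\nu^\xi_r \mid \xi \in I\}$ is a set of size $\le \tau < \kappa \le \rho$, hence bounded in the regular cardinal $\rho$ — contradicting $p \Vdash ``\name{f}$ is cofinal in $\check\rho"$ and $q \le p$.

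I do not expect the role of the hypothesis $\sup_{\gamma < \rho}(\gamma^{X^n}/U^n) < (\rho^{X^n}/U^n)$ to enter the argument as I have sketched it above; that discontinuity hypothesis is there to ensure the decided values $\nu^\xi_r$ are \emph{genuinely} below $\rho$ and, more to the point, to rule out that $\name{f}$ could be cofinal by way of values that escape every $\gamma < \rho$ — i.e., it guarantees that for a cofinal $\name f : \check\tau \to \check\rho$ one cannot have all relevant $f_\alpha$ landing below a fixed $\gamma < \rho$ in the ultrapower, which is what lets us derive the boundedness contradiction cleanly. So the correct ordering is: first use discontinuity to reduce to the case where, restricted to a large tree, the decided values for indices below $\tau$ all lie below some single $\gamma < \rho$ (via the hypothesis $\sup_{\gamma<\rho}(\gamma^{X^n}/U^n) < \rho^{X^n}/U^n$ applied to the function coding $\xi \mapsto \nu^\xi_\bullet$), and only then run the $\kappa$-completeness amalgamation to pin down $\name f \res \tau$ as a ground-model function with bounded range.

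The main obstacle is the bookkeeping around definability and choice: Lemma \ref{Lemma:WSPrikry} promises the maps $\xi \mapsto (p^\xi, n_\xi)$ and $(\xi,t)\mapsto \nu^\xi_t$ are \emph{canonically} determined, which is exactly what we need to form the length-$\tau$ sequences and their intersections without invoking any choice — but one must be careful that the iterated-intersection step (building $R = \bigcap_{\xi} R^\xi_r$) can likewise be carried out from definable data, for which we should intersect the canonical $U$-large subtrees rather than arbitrary witnesses; this is where $\kappa$-completeness of $U$ (and of each $U^n$, Remark \ref{Remark:ProductBasics}) does the real work, and where one must double-check that a $\tau$-indexed intersection of $U^n$-large trees is again $U^n$-large — which follows since $\tau < \kappa$ and $U^n$ is $\kappa$-complete, so $\suc$-sets survive the intersection. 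A secondary point to handle with care is the case $\tau = \omega$: here $\kappa$-completeness ($\kappa > \omega$) already implies $\po(U)$ adds no $\omega$-sequence of ordinals not in $V$ outside the Prikry sequence itself, but since we want a uniform statement one can simply note $\cf(\rho) = \rho > \tau$ makes the bounded-range contradiction go through verbatim once the amalgamated condition $q$ is in hand.
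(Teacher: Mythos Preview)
Your amalgamation step has a real gap. You want to intersect the witnessing subtrees $R^\xi_r$ over all $\xi \in I$ to produce a single condition forcing $\name{f}(\xi) = \nu^\xi_r$ for every $\xi$, but the proof of Lemma \ref{Lemma:WSPrikry} is explicit that ``every parameter is definable except the direct extensions $R_t$'': only $p^\xi$, $n_\xi$, and the decided values $\nu^\xi_t$ are canonical, while the trees $R^\xi_t$ are merely asserted to exist. Without choice you cannot form the sequence $\la R^\xi_r \mid \xi \in I\ra$, so $\bigcap_\xi R^\xi_r$ is unavailable; your suggestion to use ``canonical $U$-large subtrees'' does not help, because there are none. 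This is not a technicality you can repair by bookkeeping: if your argument worked it would never use the hypothesis, and would show that $\po(U)$ adds no cofinal $\tau$-sequence to \emph{any} regular $\rho$ for $\tau < \kappa$ --- which is false whenever $U$ is $\rho$-decomposable, since then $\po(U)$ changes $\cf(\rho)$ to $\omega$.

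The hypothesis is precisely what lets you avoid the $\tau$-fold amalgamation. (Comparing with the corollaries that follow, which pass through $\rho$-\emph{in}decomposability, the strict inequality in the statement is evidently a typo for equality: the intended hypothesis is continuity of each $U^n$ at $\rho$, i.e.\ every function $X^n \to \rho$ is bounded below $\rho$ modulo $U^n$.) Under continuity, the canonical map $t \mapsto \nu^\xi_t$ into $\rho$ has a canonical least bound $\gamma_\xi < \rho$ on a $U^{n_\xi}$-large set; set $\gamma^* = \sup_{\xi<\tau}\gamma_\xi < \rho$ and, using $\kappa$-completeness, intersect the canonical trees $T^{p^\xi}$ together with the canonical level-sets $\{t : \nu^\xi_t < \gamma^*\}$ into a single $U$-large tree $T^{**}$. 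Then $(s^p, T^{**})$ already forces $\name f(\xi) < \gamma^*$ for every $\xi$: any extension $(s^p\fr t, R)$ with $|t| \geq n_\xi$ forcing $\name f(\xi) \geq \gamma^*$ is compatible with a \emph{single} $(s^p\fr t, R^\xi_t)$ forcing $\name f(\xi) = \nu^\xi_t < \gamma^*$ --- one witness chosen, not $\tau$ many. This is the ``similar to Theorem \ref{THM:P(U)andU^n}'' argument the paper has in mind.
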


We verify that the continuity of $U$ at $\rho$ carries to finite tensor powers $U^n$ for other $n \geq 1$.
\begin{lemma}
    Let $U$ be a well-founded ultrafilter and $\rho$ a regular cardinal. 
    If $\sup_{\gamma < \rho}(\gamma^X/U) = \rho^X/U$ then
$\sup_{\gamma < \rho}(\gamma^{X^n}/U^n) = \rho^{X^n}/U^n$ for every $1 \leq n < \omega$.
\end{lemma}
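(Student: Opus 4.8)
The plan is to prove the statement by induction on $n$, with the base case $n=1$ being exactly the hypothesis. For the inductive step, suppose $\sup_{\gamma<\rho}(\gamma^{X^n}/U^n) = \rho^{X^n}/U^n$; I want to deduce the same for $n+1$. Since $U^{n+1} = U^n \ltimes U$, I will view functions on $X^{n+1}$ as functions on $X^n \times X$ and analyze the tensor ultrapower in two stages. The key observation is that for any cardinal $\gamma$, a function $f: X^n\times X \to \gamma$ can be coded, via its sections $f(\bar t, \cdot) : X \to \gamma$, by a function $F : X^n \to {}^X\gamma$, $F(\bar t) = f(\bar t, \cdot)$, and conversely; and the order-type $\gamma^{X^{n+1}}/U^{n+1}$ is computed by first taking the $U$-ultrapower on the last coordinate and then the $U^n$-ultrapower. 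Concretely I expect $\gamma^{X^{n+1}}/U^{n+1} = (\gamma^X/U)^{X^n}/U^n$ (replacing $\gamma^X/U$ by its ordinal order-type), using well-foundedness of $U$ and $U^n$ so that all the relevant ultrapowers are well-orderable ordinals. This "Fubini for order-types of bounded powers" identity is the technical heart, and I would isolate it as a small sub-lemma.

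Granting the identity, write $\beta_\gamma := \gamma^X/U$ for $\gamma \leq \rho$. By the base hypothesis, $\sup_{\gamma<\rho}\beta_\gamma = \beta_\rho =: \rho^X/U$, and $\rho$ regular forces $\beta_\rho$ to be (at least cofinally approached by) a cardinal of uncountable cofinality; in any case $\langle \beta_\gamma \mid \gamma<\rho\rangle$ is a nondecreasing continuous-at-the-top sequence with supremum $\beta_\rho$. Applying the Fubini identity twice: $\gamma^{X^{n+1}}/U^{n+1} = (\beta_\gamma)^{X^n}/U^n$ and $\rho^{X^{n+1}}/U^{n+1} = (\beta_\rho)^{X^n}/U^n$. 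So it suffices to show $\sup_{\gamma<\rho}\big((\beta_\gamma)^{X^n}/U^n\big) = (\beta_\rho)^{X^n}/U^n$. Now since $\beta_\rho = \sup_{\gamma<\rho}\beta_\gamma$, every element of $(\beta_\rho)^{X^n}/U^n$ is represented by some $g : X^n \to \beta_\rho$, and I would like to push $g$ below some $\beta_\gamma$ with $\gamma<\rho$. This is where the inductive hypothesis enters: by the inductive hypothesis applied with the regular cardinal $\rho$ and the ordinals $\beta_\gamma$ filling the role of the "$\gamma$"s, any $g : X^n \to \beta_\rho$ is $U^n$-equivalent to one bounded by $\beta_{\gamma'}$ for some $\gamma'<\rho$ — provided I set up the inductive hypothesis in the right form. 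The cleanest route: reformulate the lemma's conclusion as the assertion that $j_{U^n}$ (the map $\gamma \mapsto \gamma^{X^n}/U^n$, and its action on functions) is continuous at $\rho$, i.e. $[g]_{U^n} < \rho^{X^n}/U^n$ iff $g <_{U^n} (\text{const}_{\gamma})$ for some $\gamma<\rho$; then the two-stage computation makes the induction essentially formal.

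The main obstacle I anticipate is making the Fubini identity $\gamma^{X^{n+1}}/U^{n+1} \cong (\gamma^X/U)^{X^n}/U^n$ precise and correct in the choiceless setting: one must check that replacing the set $\gamma^X/U$ by its ordinal order-type does not distort the outer $U^n$-ultrapower, which requires that $\gamma^X/U$ be well-orderable (true, by well-foundedness of $U$, Lemma~\ref{LEM:WeakEquivBasics}-type reasoning / the standing almost-supercompactness assumption) and that a choice of representatives is not needed to define the outer ultrapower's order-type — here I would use that $U^n$ is well-founded so that $\in_{U^n}$ on $({}^{X^n}\beta)/U^n$ is a well-order for any ordinal $\beta$, and that the canonical factor map from $\gamma^{X^{n+1}}/U^{n+1}$ to the iterated ultrapower is an $\in$-isomorphism, definable without choice because it is induced by the fixed bijection $X^{n+1} \cong X^n\times X$. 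A secondary subtlety is continuity "at the top": I must ensure $\beta_\rho$ really equals the supremum (given) and that the outer ultrapower of a supremum sequence behaves well, which again follows from regularity of $\rho$ together with the inductive hypothesis rather than from any completeness of $U$ beyond well-foundedness. Once these points are nailed down, the induction closes and we get the claim for all $1 \le n < \omega$.
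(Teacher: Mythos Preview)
Your plan is workable, but the paper's argument is far more direct and sidesteps every obstacle you flag. The paper simply reformulates continuity as: every $f:X^{n+1}\to\rho$ is bounded by some $\gamma^*<\rho$ on a $U^{n+1}$-large set. Writing $X^{n+1}=X\times X^n$, for each $x\in X$ the slice $f_x:X^n\to\rho$ is, by the inductive hypothesis, bounded mod $U^n$ by some $\gamma<\rho$; let $g(x)$ be the least such. Then $g:X\to\rho$, and the \emph{base} hypothesis (the case $n=1$) bounds $g$ by some $\gamma^*<\rho$ mod $U$. By the definition of $U^{n+1}=U\ltimes U^n$ this gives $f<\gamma^*$ mod $U^{n+1}$. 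No Fubini identity for order-types, no $\beta_\gamma$'s, no worry about choosing representatives.

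Your route slices from the other end ($X^{n+1}=X^n\times X$) and packages the inner step as the identity $\gamma^{X^{n+1}}/U^{n+1}\cong(\gamma^X/U)^{X^n}/U^n$. Two remarks. First, that identity is not obviously available in ZF: the natural comparison map is an order-embedding, but surjectivity asks you to choose, for each $\vec t\in X^n$, a representative of a prescribed $U$-equivalence class---a genuine use of choice that the lemma does not need. Second, your application of the inductive hypothesis is not quite straight: the hypothesis gives continuity of $j_{U^n}$ at $\rho$, whereas you want it at $\beta_\rho=\rho^X/U$. The fix is to send $g:X^n\to\beta_\rho$ to $\bar g:X^n\to\rho$ by $\bar g(\vec t)=\min\{\gamma<\rho: g(\vec t)<\beta_\gamma\}$ and apply the inductive hypothesis to $\bar g$. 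Once you write this out you will see the Fubini layer was never needed: you are really running the paper's argument with the roles of the base case and the inductive step interchanged.
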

\begin{proof}
    By induction on $n \geq 1$. 
    We assume that the assertion holds for $n$, and prove it for $n+1$.
    We need to verify that for every $f : X^{n+1} \to \rho$ there is some $\gamma^* < \rho$ such that $f^{-1}(\gamma^*) \in U^{n+1}$. 
    For each $x \in X$ let $f_x : X^n \to \rho$ given by $f_x(\vec{y}) = f(\la x\ra \fr \vec{y})$. 
    By the inductive assumption there is some $\gamma < \rho$ such that $f_x^{-1}(\gamma) \in U^n$. Denote the minimal such $\gamma$ by $g(x)$. 
    Now, $g : X \to \rho$ so there is $\gamma^* < \rho$ such that $g^{-1}(\gamma^*) \in U$. It follows from the definition of $U^{n+1}$ that 
    $f^{-1}(\gamma^*) \in U^{n+1}$.
    \end{proof}

Combining the last two results, we get
\begin{corollary}\label{COR:P(U)SingRho}
     Suppose that $U$ is a $\sigma$-complete ultrafilter on a set $X$ and $\rho$ is a regular cardinal such that 
    $\sup_{\gamma < \rho} (\gamma^{X}/U) < (\rho^{X}/U)$. Then $\po(U)$ does not add a cofinal sequence to $\rho$ of length $\tau < \kappa = cp(U)$. 
\end{corollary}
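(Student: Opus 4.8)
The plan is to unwind the hypothesis into a concrete discontinuity witness, transport it to all finite tensor powers, and then feed the outcome into the two preceding lemmas of this subsection, exactly as the phrase ``combining the last two results'' suggests.

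First I would make the translation between the ordinal inequality and a combinatorial statement explicit. Viewing $\gamma^X/U$ as an initial segment of $\rho^X/U$, a function $f\colon X\to\rho$ satisfies $[f]_U\ge\gamma^X/U$ if and only if $\{x\in X\mid f(x)\ge\gamma\}\in U$: indeed $[f]_U$ lies \emph{below} $\gamma^X/U$ exactly when $f$ agrees $U$-almost everywhere with a function into $\gamma$, i.e.\ when $\{x\mid f(x)<\gamma\}\in U$. Hence the assumption $\sup_{\gamma<\rho}(\gamma^X/U)<\rho^X/U$ yields a single function $f\colon X\to\rho$ with $\{x\mid f(x)\ge\gamma\}\in U$ for every $\gamma<\rho$.

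Next I would push this witness up to the powers $U^n$. For $n\ge 1$ define $F_n\colon X^n\to\rho$ by $F_n(x_1,\dots,x_n)=f(x_n)$. For each $\gamma<\rho$ the set $\{\vec x\in X^n\mid F_n(\vec x)\ge\gamma\}$ contains $X^{\,n-1}\times\{x\mid f(x)\ge\gamma\}$, which lies in the product filter (and in $U$ when $n=1$), hence in $U^n$ by Remark \ref{Remark:ProductBasics}. Applying the equivalence of the previous paragraph to $U^n$ in place of $U$, this gives $[F_n]_{U^n}\ge\gamma^{X^n}/U^n$ for all $\gamma<\rho$, i.e.\ $\sup_{\gamma<\rho}(\gamma^{X^n}/U^n)<\rho^{X^n}/U^n$, for every $n<\omega$. (This is just the discontinuity counterpart of the second Lemma of this subsection; one could equally re-run that induction.) Finally, setting $\kappa:=cp(U)$ so that $U$ is $\kappa$-complete and recalling that $\rho$ is regular, the first Lemma of this subsection now applies verbatim and concludes that $\po(U)$ adds no cofinal sequence to $\rho$ of length $\tau<\kappa$, which is the assertion.

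I do not expect a genuine obstacle: the only delicate point is the first step, where one must be careful that ``$[f]_U\ge\gamma^X/U$'' really does translate into the membership statement $\{x\mid f(x)\ge\gamma\}\in U$ and that the hypothesis handed to the first Lemma is literally of the form it requires after passing to $U^n$; everything else is bookkeeping with the definition of the tensor powers.
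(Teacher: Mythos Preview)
Your proof is correct and follows the same two-step route the paper indicates (``combining the last two results''): propagate the hypothesis from $U$ to every $U^n$ and then invoke the first lemma of the subsection. The only difference is cosmetic---the paper's second lemma is phrased for the equality (continuity) case while the corollary and first lemma are phrased for strict inequality, so you supply the discontinuity-propagation step directly via the lift $F_n(\vec x)=f(x_n)$, which is exactly the content needed.
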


The key combinatorial property for $U$ which is connected to its continuity at $\rho$ is the one of decomposability. 

\begin{definition}
    An ultrafilter $U$ is $\rho$-decomposable if there is a sequence $\la Z_\alpha \mid \alpha \in \rho\ra$ of small sets $Z_\alpha \not\in U$ such that $\bigcup_{\alpha < \rho} Z_\alpha \in U$ but  $\bigcup_{\alpha < \beta}Z_\alpha \not\in U$ for all $\beta < \rho$. 
\end{definition}

$U$ is $\rho$-indecomposable if it is not $\rho$-decomposable.
There is a well-known connection between the notions of continuity and decomposability.
\begin{fact}
    $U$ is $\rho$-decomposable iff 
    $\sup_{\gamma < \rho} (\gamma^{X}/U) < (\rho^{X}/U)$
\end{fact}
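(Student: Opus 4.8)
The plan is to reduce the equivalence to a single intermediate condition: the existence of a function $f : X \to \rho$ which is \emph{unbounded mod $U$}, meaning $\{x \in X \mid f(x) \geq \gamma\} \in U$ for every $\gamma < \rho$. I will argue that ``$U$ is $\rho$-decomposable'' and ``$\sup_{\gamma<\rho}(\gamma^X/U) < \rho^X/U$'' are each equivalent to the existence of such an $f$.

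First I would set up the order-theoretic picture behind the right-hand side. For $\gamma < \rho$, regarding a function $X \to \gamma$ as a function $X \to \rho$ induces an $\in_U$-preserving injection of $\gamma^X/U$ into $\rho^X/U$, whose image is an \emph{initial segment}: if $[g]_U \in_U [h]_U$ with $h : X \to \gamma$, then on the $U$-set $\{x \mid g(x) < h(x)\}$ we have $g(x) < \gamma$, so $g$ is $=_U$-equivalent to a function into $\gamma$. Hence $\rho^X/U = \bigcup_{\gamma<\rho} \gamma^X/U$ (an increasing union of initial segments) exactly when every $f : X \to \rho$ satisfies $\{x \mid f(x) < \gamma\} \in U$ for some $\gamma < \rho$, and in that case $otp(\rho^X/U) = \sup_{\gamma<\rho} otp(\gamma^X/U)$, since the order type of a well-order that is the increasing union of initial segments is the supremum of their order types. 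Negating, $\sup_{\gamma<\rho}(\gamma^X/U) < \rho^X/U$ holds iff some $f : X \to \rho$ is unbounded mod $U$; for the nontrivial direction one checks that such an $f$ lies $\in_U$-above every element of $\gamma^X/U$ for all $\gamma < \rho$, so its $\in_U$-rank, which is $< otp(\rho^X/U)$, is at least $\sup_{\gamma<\rho} otp(\gamma^X/U)$.

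For the decomposability side, given a witnessing sequence $\langle Z_\alpha \mid \alpha < \rho\rangle$ I would first disjointify, replacing $Z_\alpha$ by $Z_\alpha \setminus \bigcup_{\beta<\alpha} Z_\beta$ (which preserves all three defining properties, as the partial unions are unchanged and $U$-nullity is inherited by subsets), set $A = \bigcup_{\alpha<\rho} Z_\alpha \in U$, and define $f(x) = \alpha$ for $x \in Z_\alpha$ and $f(x) = 0$ off $A$. Then for $\gamma < \rho$ the set $\{x \mid f(x) < \gamma\} = (X \setminus A) \cup \bigcup_{\alpha<\gamma} Z_\alpha$ is a union of two sets not in $U$, hence not in $U$ (if a union of two sets lies in the ultrafilter $U$ and one of them does not, the other does), so $f$ is unbounded mod $U$. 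Conversely, from an unbounded $f : X \to \rho$ put $Z_\alpha = f^{-1}(\{\alpha\})$: then $\bigcup_{\alpha<\rho} Z_\alpha = X \in U$, each initial union $\bigcup_{\alpha<\beta} Z_\alpha = \{x \mid f(x) < \beta\} \notin U$, and each $Z_\alpha \notin U$, since $Z_\alpha \in U$ would force $\{x \mid f(x) < \alpha+1\} \in U$, contradicting unboundedness; thus $\langle Z_\alpha \mid \alpha<\rho\rangle$ witnesses $\rho$-decomposability.

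I do not expect a genuine obstacle: this is essentially the classical equivalence, and the only points needing a little care are the initial-segment bookkeeping in the second paragraph (so that suprema of order types line up with unions of quotient sets) and the observation that the stated definition of $\rho$-decomposability does not demand a partition, which is why the disjointification step is included.
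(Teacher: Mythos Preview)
Your proof is correct and follows the standard classical argument. The paper does not actually prove this statement: it is labeled a \emph{Fact} and introduced as ``a well-known connection between the notions of continuity and decomposability,'' with no proof supplied. So there is nothing to compare against, and your argument---reducing both sides to the existence of an $f:X\to\rho$ that is unbounded mod $U$, via the initial-segment analysis of $\gamma^X/U$ inside $\rho^X/U$ on one side and disjointification of a decomposing sequence on the other---is exactly the expected verification.
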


In \cite{goldberg:measurablecardinals} Goldberg uses embeddings associated with almost supercompact cardinals $\kappa < \delta$ to prove that various filters $F$ on $\delta$ (such as the club filter on $\delta$) are $\rho$-indecomposable for every $\rho < \kappa$, and are therefore $\kappa$-complete. Roughly speaking, the proof relies on the existence of elementary embeddings $\pi$ satisfying the continuity property at $\rho$ of $\pi(\rho) = \rho$, and the discontinuity property $\bigcap \pi`` U \in \pi(U)$ for $U$. 

Here, we would like to extend this idea to show that the $\rho$-decomposability of $U$ follows from any ``mismatch'' in continuity by an elementary embedding $j$. Using the concepts and notions from Goldberg's work \cite{GoldbergProduct}, we can prove a slightly more general statement, involving two filters $F,G$.

\begin{definition}(Kat\'etov order)\\
    Let $F,G$ be filters on sets $X = \cup F$ and $Y = \cup G$ respectively. We write $F \leq_{kat} G$ if there is a function $f : Y \to X$ such that $F \subseteq f_* G := \{ A \subseteq X \mid f^{-1}(A) \in G\}$.
\end{definition}

The following observation from \cite{GoldbergProduct} makes the needed connection between the Kat\'etov order and decomposability. 

\begin{fact}
    An ultrafilter $U$ is $\rho$-decomposable iff $F_\rho(\rho) \leq_{kat} U$, where $F_\rho(\rho)$ is the $\rho$-Frechet (co-bounded) filter on $\rho$.
\end{fact}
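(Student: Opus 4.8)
The plan is to unpack both sides of the equivalence into a single statement about a function $f:X\to\rho$, where $X=\bigcup U$. Since $\rho$ is regular, the Fr\'echet filter $F_\rho(\rho)$ is the filter of co-bounded subsets of $\rho$, which is generated by the end-segments $\rho\setminus\beta$, $\beta<\rho$. Hence $F_\rho(\rho)\leq_{kat}U$ holds exactly when there is $f:X\to\rho$ with $f^{-1}(\rho\setminus\beta)\in U$ for every $\beta<\rho$; because $U$ is an ultrafilter, this is the same as demanding that the ``lower sets'' $\{x\in X\mid f(x)<\beta\}=f^{-1}(\beta)$ are not in $U$ for every $\beta<\rho$. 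So the whole proof is a translation back and forth between such an $f$ and a decomposing sequence $\langle Z_\alpha\mid\alpha<\rho\rangle$.

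For the direction $F_\rho(\rho)\leq_{kat}U\ \Rightarrow\ U$ is $\rho$-decomposable, I would take a witnessing $f$ as above and set $Z_\alpha:=f^{-1}\{\alpha\}$ for $\alpha<\rho$. Then $\bigcup_{\alpha<\beta}Z_\alpha=f^{-1}(\beta)\notin U$ for every $\beta<\rho$ — in particular each $Z_\alpha\notin U$, since $Z_\alpha\subseteq f^{-1}(\alpha+1)$ and $\alpha+1<\rho$ as $\rho$ is a limit ordinal — while $\bigcup_{\alpha<\rho}Z_\alpha=X\in U$ because $f$ is total into $\rho$. This is precisely the definition of $\rho$-decomposability; note this direction does not even use regularity of $\rho$.

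For the converse, given a decomposing sequence $\langle Z_\alpha\mid\alpha<\rho\rangle$, I would first disjointify by setting $Z'_\alpha:=Z_\alpha\setminus\bigcup_{\beta<\alpha}Z_\beta$. A minimal-index argument gives $\bigcup_{\beta<\alpha}Z'_\beta=\bigcup_{\beta<\alpha}Z_\beta$ for all $\alpha\leq\rho$, so the partial unions stay outside $U$, the full union $W^+:=\bigcup_{\alpha<\rho}Z'_\alpha=\bigcup_{\alpha<\rho}Z_\alpha$ stays in $U$, and $Z'_\alpha\subseteq Z_\alpha\notin U$. Define $f:X\to\rho$ by $f(x)=\alpha$ for $x\in Z'_\alpha$ and $f(x)=0$ for $x\notin W^+$. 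For each $\beta<\rho$ we have $\{x\mid f(x)<\beta\}\subseteq(X\setminus W^+)\cup\bigcup_{\alpha<\beta}Z'_\alpha$, a union of two sets not in $U$ (the first since $W^+\in U$), hence not in $U$ because $U$ is an ultrafilter; therefore $f^{-1}(\rho\setminus\beta)\in U$ for all $\beta<\rho$, so $F_\rho(\rho)\subseteq f_*U$ and $F_\rho(\rho)\leq_{kat}U$.

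There is no serious obstacle here; the two points that need a little care are exactly the two uses of regularity and of the ultrafilter property. Regularity of $\rho$ is what lets us pass from ``$f^{-1}(\rho\setminus\beta)\in U$ for all $\beta$'' to ``$F_\rho(\rho)\subseteq f_*U$'', since then every co-bounded (indeed co-$(<\rho)$) subset of $\rho$ contains an end-segment; and the ultrafilter property (not merely being a filter) is what makes a union of two $U$-null sets $U$-null. Both are entirely routine, so the ``Fact'' follows.
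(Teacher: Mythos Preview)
Your proof is correct. The paper does not actually prove this statement; it is recorded as a \emph{Fact} and attributed to \cite{GoldbergProduct} as a known observation, so there is no argument in the paper to compare against. Your translation between a decomposing sequence and a function $f:X\to\rho$ witnessing $F_\rho(\rho)\subseteq f_*U$ is the standard one and is carried out cleanly. One minor remark: your claim that regularity is needed to pass from ``$f^{-1}(\rho\setminus\beta)\in U$ for all $\beta$'' to ``$F_\rho(\rho)\subseteq f_*U$'' depends on reading ``co-bounded'' as ``complement of cardinality $<\rho$'' rather than ``complement bounded below $\rho$''; under the latter reading the end-segments already generate $F_\rho(\rho)$ outright. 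Either way the argument goes through in the intended context where $\rho$ is regular.
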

 
Adding the last to Corollary \ref{COR:P(U)SingRho}, we conclude 

\begin{corollary}\label{COR:Kat2Sing}
If $\rho$ is a regular cardinal such that  $F_\rho(\rho) \not\leq_{kat} U$ then $\po(U)$ does not add a cofinal sequence to $\rho$ of length $\tau < \kappa = cp(U)$. 
\end{corollary}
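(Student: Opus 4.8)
The plan is to obtain Corollary~\ref{COR:Kat2Sing} by simply chaining the two Facts of this subsection with Corollary~\ref{COR:P(U)SingRho}. Assume $F_\rho(\rho)\not\leq_{kat} U$. By the Fact characterizing the Kat\'etov position of the Fr\'echet filter, this is precisely the statement that $U$ is \emph{not} $\rho$-decomposable. By the Fact relating decomposability to discontinuity, $\rho$-indecomposability of $U$ is equivalent to the equality $\sup_{\gamma<\rho}(\gamma^X/U)=\rho^X/U$: whenever $\gamma<\rho$ we have $\gamma^X/U\leq\rho^X/U$, so the only alternative to the strict inequality $\sup_{\gamma<\rho}(\gamma^X/U)<\rho^X/U$ is equality. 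In other words, the ultrapower by $U$ is continuous at $\rho$.

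Now apply Corollary~\ref{COR:P(U)SingRho}, whose hypothesis is exactly this continuity of $U$ at the regular cardinal $\rho$. Note that $U$ is $\sigma$-complete: this is built into the setting of this subsection (and is needed for the notation $\kappa=cp(U)$ to be meaningful with $\kappa>\omega$). The corollary then yields that $\po(U)$ adds no cofinal sequence to $\rho$ of length $\tau<\kappa=cp(U)$, which is the assertion of Corollary~\ref{COR:Kat2Sing}.

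There is no genuine difficulty here; all of the content has already been isolated. The substantive ingredients are the two cited Facts (taken from \cite{GoldbergProduct}) and Corollary~\ref{COR:P(U)SingRho}, which in turn rests on the weak strong Prikry property (Lemma~\ref{Lemma:WSPrikry}) and the transfer of continuity from $U$ to its finite tensor powers $U^n$. The one point to keep straight is the bookkeeping of which direction of the continuity (in)equality is being invoked where, i.e.\ that $\rho$-indecomposability of $U$ is exactly the negation of the strict inequality appearing in the decomposability Fact.
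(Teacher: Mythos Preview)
Your argument is correct and is exactly the paper's: the paper simply writes ``Adding the last [Fact] to Corollary~\ref{COR:P(U)SingRho}, we conclude'' Corollary~\ref{COR:Kat2Sing}, which is precisely the chain you spell out. One caveat on your bookkeeping remark: as literally printed, Corollary~\ref{COR:P(U)SingRho} (and the lemma above it) carries the \emph{strict} inequality $\sup_{\gamma<\rho}(\gamma^X/U)<\rho^X/U$---i.e.\ discontinuity---as its hypothesis, not the equality you invoke; your reading (continuity) is the intended and mathematically correct one, since otherwise neither the paper's derivation nor yours would go through.
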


Motivated to find conditions under which the assumption of Corollary \ref{COR:Kat2Sing} is secured, we consider different notions by which an elementary embedding $j : V_\eta \to M$ is continuous/discontinuous at a filter $F$. To make the definition nontrivial, we will always assume our filter $F$ does not have principal ultrafilter extensions, i.e., $\bigcap F = \emptyset$.

\begin{definition}${}$
Let $V_\eta$ with $F \in V_\eta$.
\begin{enumerate}
    \item We say that an elementary embedding $j : V_\eta \to M$ is  Type I discontinuous at $F$ if 
    $\bigcap j``F \neq \emptyset$.

    \item We say that $h : V_\eta \to M$ is  Type II discontinuous at $F$ if $\bigcap j``F \in j(F)$.
\end{enumerate}
\end{definition}

Next, we consider possible notions of continuity of an embedding $j$ at a filter $F$. It makes sense that a good notion of continuity of $j$ at $F$ would involve having $j``F$ (nearly) generating $j(F)$. It is not clear if this can be achieved at all without the assumption of $j[\cup F] \in j(F)$. The next Lemma explains why the requirement $j[\cup F] \in j(F)$ makes a sensible notion of continuity.


\begin{lemma}
    If $j$ satisfies $j[\cup F] \in j(F)$ then $j(F)$ is generated by $j``F \cup \{ j[\cup F]\}$. 
\end{lemma}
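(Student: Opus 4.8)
The plan is to show the two inclusions of filters generated by $j``F \cup \{j[\cup F]\}$ and $j(F)$. Write $X = \cup F$, so by hypothesis $j[X] \in j(F)$. The inclusion $j``F \cup \{j[X]\} \subseteq j(F)$ is immediate: for each $A \in F$ we have $j(A) \in j(F)$ by elementarity (and $j(A) \supseteq j``A$ when $A$ is a set of atoms, but we need $j(A)$ itself, which is what lies in $j(F)$), and $j[X] \in j(F)$ by assumption. So the filter generated by $j``F \cup \{j[X]\}$ — meaning the collection of all supersets of finite intersections $j(A_1) \cap \cdots \cap j(A_k) \cap j[X]$ — is contained in $j(F)$, since $j(F)$ is a filter closed under finite intersection and superset.

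For the reverse inclusion, the key step is to take an arbitrary $B \in j(F)$ and show $B$ contains a set of the form $j(A) \cap j[X]$ for some $A \in F$. First I would intersect with $j[X]$: since $j[X] \in j(F)$, we have $B \cap j[X] \in j(F)$, so it suffices to find $A \in F$ with $j(A) \cap j[X] \subseteq B \cap j[X]$, equivalently $j(A) \cap j[X] \subseteq B$. Now consider the pullback $A := j^{-1}(B) = \{ x \in X \mid j(x) \in B \}$; this is a well-defined subset of $X$ in $V_\eta$ (here $V_\eta$ is a set, so $A$ exists). I claim $A \in F$. To see this, note that by definition $j``A = j[X] \cap B$ — indeed $j(x) \in B$ iff $x \in A$ — and since $j[X] \in j(F)$ and $B \in j(F)$ we get $j``A = j[X] \cap B \in j(F)$. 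Then apply elementarity in the form: $j(A) \supseteq j``A$, and more to the point, if $A \notin F$ then (since $F$ is a filter, hence $X \setminus A$ meets every... no) — the cleanest route is $A \notin F$ would give, since $F$ is an ultrafilter? It need not be. So instead: $A \notin F$ implies $j(A) \notin j(F)$ by elementarity; but $j``A \subseteq j(A)$ and $j``A \in j(F)$, so $j(A) \in j(F)$ as a superset, a contradiction. Hence $A \in F$. Finally, $j(A) \cap j[X] \supseteq j``A = j[X] \cap B \supseteq$ is not quite what we want; rather we directly observe $j[X] \cap B = j``A \subseteq j(A)$, so $j[X] \cap B \subseteq j(A) \cap j[X] \subseteq B$ would need the last inclusion — which holds because $j(A) \cap j[X] = j``A = j[X] \cap B \subseteq B$. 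Thus $B \supseteq j[X] \cap B = j(A) \cap j[X]$, exhibiting $B$ as a superset of a generator, and the two filters coincide.

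The main obstacle I anticipate is bookkeeping around the distinction between $j(A)$ and the pointwise image $j``A$: the identity $j``A = j[X] \cap j(A)$ (for $A \subseteq X$) is what makes everything work, and verifying it cleanly — together with confirming $A = j^{-1}(B)$ is a legitimate element of $V_\eta$, which is fine since $\eta$ is an ordinal and $V_\eta$ is transitive and closed under such definable subsets — is the only subtle point. Everything else is routine filter manipulation, so I would present the $A \in F$ argument carefully and treat the rest briefly.
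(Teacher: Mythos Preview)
Your proposal is correct and follows essentially the same approach as the paper: given $B \in j(F)$, pull back to $A = j^{-1}(B)$, argue $A \in F$ via $j(A) \supseteq j``A = j[X] \cap B \in j(F)$ and elementarity, then conclude $j(A) \cap j[X] = j``A = j[X] \cap B \subseteq B$. The paper's proof is terser (it omits the easy inclusion and does not spell out the identity $j(A) \cap j[X] = j``A$), but the argument is the same.
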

\begin{proof}
Let $X = \cup F$ the domain of $F$. By assumption $j[X] \in j(F)$.
    For every $A^* \in j(F)$, let $A = j^{-1}(A^*) =j^{-1}(A^* \cap j[X])$.
    $A$ must be in $F$ as $j(A) \supseteq A^* \cap j[X]$. 
    Also $j(A) \cap j[X] = A^* \cap j[X]\subseteq A^*$.
\end{proof}

\begin{definition}${}$
\begin{enumerate}
    \item We say $j$ is Type I continuous at $F$ if $ \bigcap j`` F  = \emptyset$.
    
    \item We say $j$ is Type II continuous at $F$ if $j[\cup F] \in j(F)$.
\end{enumerate}
 
 \end{definition}

 \begin{remark}\label{RMK:F_rho(rho)}
     Note that for a regular cardinal $\rho \in V_\eta$ and an elementary embedding $j :V_\eta \to M$, 
     $j$ is continuous at $\rho$ iff if is type I continuous at $F_\rho(\rho)$ iff it is type II continuous at $F_\rho(\rho)$. And similarly, it is discontinuous at at $\rho$ iff it is type I discontinuous at $F_\rho(\rho)$ iff it is  type II discontinuous at $F_\rho(\rho)$.
 \end{remark}

\begin{theorem}
 Suppose that $F,G$ are filters without principal ultrafilter extensions, and $j : V_\eta \to M$ is an elementary embedding such that $F,G \in V_\eta$. 
 \begin{enumerate}
     \item If $j$ is type I continuous at $F$ and  type I discontinuous at $G$ then $F \not\leq_{kat} G$.

     \item If $j$ is  type II continuous at $F$ and  type II discontinuous at $G$ then $G \not\leq_{kat} F$.
 \end{enumerate}
\end{theorem}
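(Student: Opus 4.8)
The plan is to prove both clauses by contradiction, in each case transporting a suitable witness across $j$ and invoking elementarity; the two clauses require genuinely different moves, matching the different flavors of the Type I and Type II hypotheses. Throughout write $X := \cup F$ and $Y := \cup G$, so that $X, Y \in V_\eta$; I will also assume $\eta$ is a limit ordinal large enough that every function between subsets of $X$ and $Y$ lies in $V_\eta$, so that a Kat\'etov witness is a legitimate parameter for $j$ (this holds whenever $\eta$ is a limit, as it may be taken to be in the applications).

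For clause (1), suppose toward a contradiction that $F \leq_{kat} G$, witnessed by $f : Y \to X$ with $f^{-1}(A) \in G$ for all $A \in F$. Since $j$ is Type I discontinuous at $G$, fix a point $y^* \in \bigcap j``G$; then $y^* \in j(B)$ for every $B \in G$, in particular $y^* \in j(Y)$, so $j(f)(y^*)$ is defined. The goal is to show $j(f)(y^*) \in \bigcap j``F$, contradicting Type I continuity at $F$. Fix $A \in F$; then $f^{-1}(A) \in G$, so applying $j$ and using the elementarity identity $j(f^{-1}(A)) = (j(f))^{-1}(j(A))$ together with $\bigcap j``G \subseteq j(f^{-1}(A))$, we get $y^* \in (j(f))^{-1}(j(A))$, that is, $j(f)(y^*) \in j(A)$. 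As $A$ ranges over $F$ this gives $j(f)(y^*) \in \bigcap j``F$, so $\bigcap j``F \neq \emptyset$, a contradiction; hence $F \not\leq_{kat} G$.

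For clause (2), suppose toward a contradiction that $G \leq_{kat} F$, witnessed by $f : X \to Y$ with $f^{-1}(B) \in F$ for all $B \in G$. By elementarity, $M$ sees that $j(f) : j(X) \to j(Y)$ and $(j(f))^{-1}(B^*) \in j(F)$ for every $B^* \in j(G)$. Apply this with $B^* := \bigcap j``G$, which lies in $j(G)$ by Type II discontinuity at $G$; then $(j(f))^{-1}\!\big(\bigcap j``G\big) \in j(F)$. On the other hand, Type II continuity at $F$ gives $j[X] \in j(F)$. Since $j(F)$ is a proper filter, the intersection $(j(f))^{-1}\!\big(\bigcap j``G\big) \cap j[X]$ is a member of $j(F)$, hence nonempty; pick $w = j(x)$ in it, with $x \in X$. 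Then $j(f)(w) = j(f(x))$ by elementarity, and $j(f(x)) \in \bigcap j``G$, so $j(f(x)) \in j(B)$ for every $B \in G$; by elementarity (and injectivity of $j$ on $V_\eta$) this forces $f(x) \in B$ for every $B \in G$, that is, $f(x) \in \bigcap G$. But $G$ has no principal ultrafilter extension, so $\bigcap G = \emptyset$ --- a contradiction, whence $G \not\leq_{kat} F$.

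I do not anticipate a real obstacle: the substance is just choosing the right elementarity identities ($j(f^{-1}(A)) = (j(f))^{-1}(j(A))$ and $j(f)(j(x)) = j(f(x))$) and the two closure facts that $j[\cup F]$ and $j$-preimages of $j(G)$-sets land in $j(F)$. The one conceptual thing to get right is the asymmetry between the clauses. For Type I, ``continuity at $F$'' is emptiness of $\bigcap j``F$, so the argument must \emph{create} an element of that intersection, which it does by pushing the discontinuity witness $y^*$ forward along $j(f)$. For Type II, ``continuity at $F$'' is the generation statement $j[\cup F] \in j(F)$, so instead the argument \emph{pulls back} the discontinuity set $\bigcap j``G$ along $j(f)$, meets it with $j[\cup F]$ to recover a genuine point $j(x)$, and then uses $\bigcap G = \emptyset$. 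The only minor care is making the Kat\'etov witness available as a parameter, handled by the remark on choosing $\eta$.
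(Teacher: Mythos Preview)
Your proof is correct and follows essentially the same approach as the paper: clause~(1) is identical (push the discontinuity witness forward along $j(f)$ to land in $\bigcap j``F$), and clause~(2) pulls back $\bigcap j``G$ along $j(f)$ into $j(F)$ and uses $j[X]\in j(F)$ to produce a point of $\bigcap G$. The only minor difference is that in clause~(2) the paper invokes the preceding generation lemma to find a specific $A\in F$ with $j(A)\cap j[X]\subseteq (j(f))^{-1}(\bigcap j``G)$ and concludes $f``A\subseteq B$ for all $B\in G$, whereas you bypass that lemma by directly intersecting inside the proper filter $j(F)$ and extracting a single point $j(x)$; your route is marginally more economical but the underlying idea is the same.
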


\begin{proof}${}$
Denote $X = \cup F$, $Y = \cup G$.
    \begin{enumerate}
        \item Suppose otherwise, let $f : Y \to X$ such that $F \subseteq f_* G$. 
Let $\beta \in \bigcap j``G \in j(G)$. 
For every $A \in F$, $f^{-1}(A) \in G$ so $\beta \in  j(f^{-1}(A)) = j(f)^{-1}(j(A))$. Therefore, $\alpha= j(f)(\beta) \in j(A)$. It follows that 
$\alpha \in \bigcap j``F$, contradicting the assumption 
that $j$ is type I continuous at $F$.

\item 
Suppose otherwise. Let $g : X \to Y$ such that $G \subseteq g_* F$,
 $B^* = \bigcap j`` G \in j(G)$, and $A^* = j(g)^{-1}(B^*) \in j(F)$. 
Since $j$ is type II continuous at $F$ there is some $A \in F$ such that $j(A) \cap j[X] \subseteq A^*$. 
It follows that for every $B \in G$, 
$j(A) \cap j[X] \subseteq j(g^{-1}(B))$,  which implies that $A \subseteq g^{-1}(B)$, and thus, that $g``A \subseteq B$. 
This contradicts the assumption that $G$ does not have principal ultrafilter extensions. 
    \end{enumerate}
\end{proof}

By combining the last Theorem with Remark \ref{RMK:F_rho(rho)} and Corollary \ref{COR:Kat2Sing} we get
\begin{corollary}\label{COR:DistinguishingEmbedding}
    Let $U$ be a $\sigma$-complete ultrafilter on a set $X$ and $\rho$ a regular.
    If there is $V_\eta$ with $\rho,U \in V_\eta$ and elementary embedding $j : V_\eta \to M$ that is either
    \begin{itemize}
        \item continuous at $\rho$ and type I discontinuous at $U$, or 
        \item discontinuous at $\rho$ and type II continuous at $U$,
    \end{itemize}
    then $U$ is $\rho$-indecomposable and $\po(U)$ does not add a cofinal sequence to $\rho$ of length $\tau < \kappa = cp(U)$. 
    \end{corollary}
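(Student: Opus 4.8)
The plan is to collapse both listed hypotheses into the single statement $F_\rho(\rho)\not\leq_{kat}U$ and then quote the tools already assembled. I would first record the two standing facts that make everything line up: by Remark \ref{RMK:F_rho(rho)}, continuity of $j$ at $\rho$ is the same as type I (equivalently type II) continuity of $j$ at the Frechet filter $F_\rho(\rho)$, and discontinuity of $j$ at $\rho$ is the same as type I (equivalently type II) discontinuity at $F_\rho(\rho)$; and both $F_\rho(\rho)$ and $U$ are filters without principal ultrafilter extensions (for $F_\rho(\rho)$ because every bounded subset of $\rho$ is excluded, for $U$ because $U$ is a nonprincipal ultrafilter), so that the preceding Theorem applies to the pair. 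I would also note that $F_\rho(\rho)\in V_\eta$, since $\rho\in V_\eta$ and $F_\rho(\rho)$ is definable from $\rho$.

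Next I would split into the two cases of the hypothesis. If $j$ is continuous at $\rho$ and type I discontinuous at $U$, apply clause (1) of the preceding Theorem with $F:=F_\rho(\rho)$ and $G:=U$: by the remark $j$ is type I continuous at $F$, and by assumption type I discontinuous at $G$, so the Theorem yields $F_\rho(\rho)=F\not\leq_{kat}G=U$. If instead $j$ is discontinuous at $\rho$ and type II continuous at $U$, apply clause (2) with the roles reversed, $F:=U$ and $G:=F_\rho(\rho)$: by assumption $j$ is type II continuous at $F$, and by the remark type II discontinuous at $G$, so the Theorem yields $F_\rho(\rho)=G\not\leq_{kat}F=U$. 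Either way we arrive at $F_\rho(\rho)\not\leq_{kat}U$.

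Finally I would cash this in twice: by the Fact relating $\rho$-decomposability to the Kat\'etov order, $F_\rho(\rho)\not\leq_{kat}U$ is precisely the assertion that $U$ is $\rho$-indecomposable; and by Corollary \ref{COR:Kat2Sing}, the same inequality gives that $\po(U)$ adds no cofinal sequence to $\rho$ of length $\tau<\kappa=cp(U)$. I do not expect a genuine obstacle here, since the analytic content was already pushed into the preceding Theorem. The one place that needs care is that the two clauses of that Theorem are not symmetric in $F$ and $G$ — clause (1) has the continuous filter in the first slot and the discontinuous one in the second, while clause (2) reverses this — so the identification of which of $F_\rho(\rho)$ and $U$ occupies which slot must be carried out separately in each case, and one must check that the standing hypothesis "filters without principal ultrafilter extensions" is literally satisfied before invoking it.
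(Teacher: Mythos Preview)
Your proposal is correct and matches the paper's approach exactly: the paper's proof is the single sentence ``By combining the last Theorem with Remark \ref{RMK:F_rho(rho)} and Corollary \ref{COR:Kat2Sing} we get\ldots'', and you have simply unpacked this, correctly assigning $F_\rho(\rho)$ and $U$ to the slots $F$, $G$ in each clause of the Theorem so that both bullet points reduce to $F_\rho(\rho)\not\leq_{kat}U$. Your caution about the asymmetry of the two clauses and the standing no-principal-extension hypothesis is well placed but routine here.
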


\subsection{A choiceless version of the Mathias criterion}\label{Section:Mathias}
We wish to prove an analogue of the Mathias Criterion for this poset. The details are more technical than the standard Mathias Criterion for Prikry forcing, as a consequence of only having access to the weak form of the standard strong Prikry lemma. We will phrase this criterion in terms of the following systems:

\begin{definition}
    A $U$-Subtree-Predicate-System ($U$-SPS)  consists of a triple $(T,n,P)$ such that 
    \begin{itemize}
        \item $T$ is a $U$-large tree
        \item $n < \omega$
        \item $P$ is a predicate on pairs $(t,S)$ where $t$ is a finite sequence with length at least $n$ and $S$ is a $U$-large sub-tree of $T_t$.
        \item for every $t \in T$ of length $|t| \geq n$ the set $\{ S \mid P(t,S) \}$ is nonempty and downwards closed.
    \end{itemize}

    We say that a sequence $\vec{x} = \la x_i \mid i < \omega\ra$ \emph{fits} $(T,n,P)$ if $\vec{x}$ is a branch through $T$ and there is some $k \geq n$ and a $U$-large sub-tree $S$
   of $T_{\vec{x}\uhr k}$ so that $P(\vec{x}\uhr k,S)$ holds and $\vec{x}\setminus k$ is a branch through $S$.
\end{definition}

\begin{definition}${}$
\begin{enumerate}
    \item
    A Global Predicate System (GPS) for $U$, is an assignment $\Phi$ whose domain is $X^{<\omega}$ and for every $r \in X^{<\omega}$, 
    $\Phi(r) = (T^r,n^r,P^r)$ is a $U$-SPS.

    \item The $\po(U)$-filter generated by an infinite sequnece $\vec{x} \in X^\omega$ is given by 
    \[
    G(\vec{x}) = \{ (\vec{x}\uhr k , T) \mid k < \omega, T \text{ is } U\text{-large, and } \vec{x}\setminus k \in [T]\}.
    \]

    \item We say that a sequence $\vec{x} \in X^\omega$ is $\po(U)$-generic when the filter  $G(\vec{x})$ is generic.
    \end{enumerate}
\end{definition}

\begin{theorem}\label{THM:AlternativeMathias}(A choiceless Mathias Criterion)\\
    Let $U$ be a $\sigma$-complete measure on a set $X$. A sequence $\vec{x} = \langle x_i \mid i<\omega\rangle \in X^\omega$ generates a generic for $\po(U)$ iff 
    for every Global Predicate System $\Phi$ for $U$ there is some $m < \omega$ such that
    $\vec{x}\setminus m$ fits the $U$-SPS $\Phi(\vec{x}\uhr m)$.
\end{theorem}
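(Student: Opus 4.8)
The plan is to prove both directions of the biconditional characterizing $\po(U)$-generic sequences $\vec x$ via Global Predicate Systems, using the Weak Strong Prikry Property (Lemma \ref{Lemma:WSPrikry}) as the main engine. The key translation is this: a dense open set $D\subseteq \po(U)$ below a condition $(r,T^r)$ produces, by Lemma \ref{Lemma:WSPrikry}, a canonical direct extension $q^r\leq^*(r,T^r)$ and a level $n^r$ such that for every $t\in T^{q^r}$ with $|t|\ge n^r$ the condition $(r\fr t, T^{q^r}_t)$ has a direct extension landing in $D$; packaging this data as a $U$-SPS $\Phi(r)=(T^r,n^r,P^r)$, where $P^r(t,S)$ holds iff $(r\fr t, S)\in D$ (intersected with ``$S\subseteq T^{q^r}_t$ and $S$ is $U$-large''), gives a GPS $\Phi=\Phi_D$. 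Conversely, a GPS $\Phi$ determines a collection of dense open sets: below each $(r,T^r)$, the set $D_\Phi^r=\{(r\fr t,S)\mid |t|\ge n^r,\ P^r(t,S)\}$ is dense open below $(r,T^r)$ (density uses the ``nonempty'' clause in the definition of $U$-SPS together with density of one-point extensions; openness uses the ``downwards closed'' clause). So the two notions — collections of (canonically chosen) dense open sets, and GPS's — are essentially interchangeable.

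For the forward direction, suppose $\vec x$ is $\po(U)$-generic and let $\Phi$ be a GPS. Let $r=\vec x\uhr 0$ (or rather, run the argument uniformly). Consider the dense open set $D_\Phi^{\la\ra}$ below $(\la\ra, T^{\la\ra})$ — more carefully, one wants to handle the fact that $\vec x$ need not be a branch through $T^{\la\ra}$, so one should instead argue: by genericity, $G(\vec x)$ meets the dense set of conditions that either are incompatible with $(\la\ra,T^{\la\ra})$ or extend into $D_\Phi^{\la\ra}$; if $\vec x$ is not a branch through $T^{\la\ra}$ there is nothing to prove for this $r$, and otherwise genericity against $D_\Phi^{\la\ra}$ yields some $(\vec x\uhr m, S)\in G(\vec x)\cap D_\Phi^{\la\ra}$, meaning $m\ge n^{\la\ra}$, $P^{\la\ra}(\vec x\uhr m, S)$ holds, and $\vec x\setminus m\in[S]$ — which is exactly ``$\vec x\setminus m$ fits $\Phi(\vec x\uhr m)$,'' once one checks $\Phi(\vec x\uhr m)$ is the $U$-SPS referenced. (Here one must be slightly careful: the theorem says $\vec x\setminus m$ fits $\Phi(\vec x\uhr m)$, i.e. the SPS indexed by $\vec x\uhr m$ itself, not by $\la\ra$; the resolution is that a GPS gives an SPS at every node, and the fitting condition is phrased so that the relevant stem is $\vec x\uhr m$, with the tail living inside $T^{\vec x\uhr m}$. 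So one actually applies genericity to the dense open set $D_\Phi$ built to detect, for a condition with stem $s$, whether some further extension of $s$ to length $m$ satisfies $P^s$ with a $U$-large tail — this is where the canonical, definable nature of the SPS data matters, so that $D_\Phi$ is genuinely a single definable dense open set.)

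For the reverse direction, suppose $\vec x$ fits every GPS and let $D\subseteq\po(U)$ be dense open; we must show $G(\vec x)\cap D\neq\emptyset$. Fix any condition $(\vec x\uhr k, T)\in G(\vec x)$; by homogeneity (Remark \ref{RMK:Homog}) and density it suffices to find an extension of some $(\vec x\uhr k, T)$ in $D$. Form the GPS $\Phi_D$ as above, built from applying Lemma \ref{Lemma:WSPrikry} to $D$ below $(\vec x\uhr k, T)$ for each stem: $\Phi_D(\vec x\uhr k)=(T^{q},n, P)$ where $q\leq^*(\vec x\uhr k,T)$ and $P(t,S)$ holds iff $(\vec x\uhr k\fr t, S)\in D$. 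By hypothesis there is $m$ with $\vec x\setminus m$ fitting $\Phi_D(\vec x\uhr m)$ — one needs $\vec x$ to actually be a branch through $T^q$, which follows because $q$ is a direct extension obtained canonically and one can pre-shrink so that $G(\vec x)$ already contains $(\vec x\uhr k, T^q)$ (this requires knowing $\vec x\setminus k\in[T^q]$, i.e. that $\vec x$ respects the canonical shrinking — this is the point where one must set up the indexing of the GPS so that the relevant $U$-large tree at the stem $\vec x\uhr m$ is a sub-tree of what $\vec x$ actually passes through; iterating the construction along the finitely-changing stem, or invoking $\sigma$-completeness to intersect the canonical trees, handles this). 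Fitting then yields $k'\ge n$ and a $U$-large $S\subseteq T^q_{\vec x\uhr k'}$ with $P(\vec x\uhr k', S)$ and $\vec x\setminus k'\in[S]$; unwinding $P$, the condition $(\vec x\uhr k', S)\in D$, and $(\vec x\uhr k', S)\in G(\vec x)$ since $\vec x\setminus k'\in[S]$. Hence $G(\vec x)\cap D\neq\emptyset$.

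The main obstacle I anticipate is bookkeeping the interplay between the stem of $\vec x$ and the canonical tree-shrinking supplied by Lemma \ref{Lemma:WSPrikry}: the SPS at a node $r$ lives below $(r,T^r)$, but as $\vec x$ grows its stem changes, and one must ensure that the tails of $\vec x$ remain inside the (possibly shrunk) $U$-large trees $T^{q^r}$ appearing in the GPS. This is exactly the kind of difficulty the authors flag — ``the details are more technical\dots as a consequence of only having access to the weak form of the standard strong Prikry lemma.'' The fix is to exploit that Lemma \ref{Lemma:WSPrikry} produces $q$ and $n$ \emph{canonically/definably} from $p$ and $D$, so the GPS $\Phi_D$ is a bona fide definable object; and to use $\sigma$-completeness of $U$ (hence of all $U^n$) to take the needed countable intersections of canonical trees along the branch, so that genericity against the single dense open set encoding $\Phi_D$ does all the work at once, rather than needing to diagonalize against infinitely many dense sets. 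A secondary subtlety is verifying that $D_\Phi^r$ is genuinely dense open below $(r,T^r)$: openness is immediate from downward closure of $\{S\mid P^r(t,S)\}$, while density requires, given any $(r\fr t, S')$ with $|t|<n^r$, extending $t$ by finitely many one-point steps to reach length $n^r$ (possible as $S'$ is $U$-large hence has successors at every node) and then invoking nonemptiness of $\{S\mid P^r(t,S)\}$ together with the tail-intersection being $U$-large by $\sigma$-completeness.
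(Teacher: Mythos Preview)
Your approach is essentially the paper's, and the core translation you identify (dense open sets $\leftrightarrow$ GPS's via the Weak Strong Prikry Property) is exactly right. But you are over-complicating both directions in ways that create phantom obstacles.

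\textbf{Forward direction.} The paper does not build an auxiliary dense open set $D_\Phi$ out of $\Phi$ at all. It simply checks directly that for any $p=(s,T)$, the direct extension $p^*=(s,T\cap T^s)$ forces ``$\exists m$ such that $\vec x^G\setminus m$ fits $\Phi(\vec x^G\uhr m)$'': given any $q=(s\fr t,R)\le p^*$ with $|t|\ge n^s$, nonemptiness and downward closure of $\{S\mid P^s(t,S)\}$ produce a $U$-large $S\subseteq R\cap T^s_t$ with $P^s(t,S)$, and $(s\fr t,S)$ forces the statement with $m=|s|$. Your worry about the indexing (``the SPS indexed by $\vec x\uhr m$ itself, not by $\la\ra$'') dissolves once you let the stem $s$ of the ambient condition play the role of $\vec x\uhr m$; no single global dense set is needed.

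\textbf{Reverse direction.} The source of your bookkeeping trouble is that you start from an arbitrary $(\vec x\uhr k,T)\in G(\vec x)$ and then try to define $\Phi_D$ relative to it. The paper instead applies Lemma \ref{Lemma:WSPrikry} to the \emph{weakest} condition $p_r=(r,X^{<\omega})$ for every $r\in X^{<\omega}$, obtaining canonical $q_r$, $n_r$; this defines $\Phi_D(r)=(T^{q_r},n_r,P_r)$ uniformly with no reference to $\vec x$. More importantly, your worry ``one needs $\vec x$ to actually be a branch through $T^q$'' is misplaced: the very definition of ``$\vec x\setminus m$ fits $(T,n,P)$'' \emph{requires} $\vec x\setminus m$ to be a branch through $T$. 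So once the hypothesis hands you an $m$ with $\vec x\setminus m$ fitting $\Phi_D(\vec x\uhr m)$, you automatically have $\vec x\setminus m\in[T^{q_{\vec x\uhr m}}]$ together with $k\ge n_{\vec x\uhr m}$ and $S$ with $(\vec x\uhr(m+k),S)\in D$ and $\vec x\setminus(m+k)\in[S]$; hence $(\vec x\uhr(m+k),S)\in G(\vec x)\cap D$. No pre-shrinking, no countable intersections of canonical trees, no iteration along changing stems is required; the $\sigma$-completeness of $U$ is used only inside Lemma \ref{Lemma:WSPrikry}, not again here.
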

\begin{proof}
	Suppose first that $G \subseteq \po(U)$ is a generic filter and $x^G = \bigcup\{ s^p \mid p \in G\}$. To show $\vec{x}$ satisfies the alternative Mathias property it suffices to check that for every  Global Predicate System $\Phi$ and a condition $p = (s,T)$ the direct extension $p^* = (s,T^*)$ of $p$, given by $T^* = T \cap T^s$, satisfies
\[ p^* \Vdash \exists k\ \name{x}^G \setminus k \text{ fits } \Phi(\name{x}\uhr k).
\]
Indeed, let $q = (r \fr t, R)$ be an extension of $p^*$. Without loss of generality we may assume $t \in T^*$ has length $|t| \geq n^s$.
We may therefore pick a $U$-large subtree $S$ of $R \cap T^s_t$ such that $P^s(t,S)$ holds. It is then clear that $(r \fr t,S)$ is a direct extension of $q$ which forces 
\[  \name{x}^G \setminus |s\fr t| \text{ fits } \Phi(\name{x}\uhr |s\fr t|).
\]

  Next, we show that a sequence meeting this criteria will generate a generic.
    Suppose $\vec{x} = \langle x_i \mid i<\omega\rangle \in X^\omega$ satisfies the conditions of the theorem. 

    Let $D$ be a dense open set. For each condition $p \in G(\vec{x})$, applying the Weak Strong-Prikry Property, we obtain a canonical direct extension $q \leq^* p$ and $n<\omega$ such that for every $t\in T^q$ of length $|t| \geq n$, there is a subtree $S \subseteq T^q_t$ such that $(s^{p}\fr t, S)$ is a direct extension of $q\fr t$ and belongs to $D$.

    For every finite sequence $r$, let $p_r = (r, X^\omega)$ be the weakest condition containing $r$ as a stem. Define the GPS $\Phi$ by setting $\Phi(r)$ to be the $U$-SPS $(T^{q},n, P)$ in the following way (noting that $q, T^q,$ and $n$ are determined from $r$ canonically): $P(t, S)$ holds if $S$ is a subtree of $T^{q}_t$ such that $(s^{q}\fr t, S)$ is a direct extension of $q\fr t$ and belongs to $D$. This is nonempty by the weak Strong Prikry Property.

    By assumption, there is some $l$ such that $\vec{x}\setminus l$ fits $\Phi(\vec{x}\uhr l)$. Let $p = (\vec{x} \uhr l, X^\omega)$. Let $q$, $n$, and $T^q$ be as in the conclusion of the weak Strong Prikry Property with respect to $p$. Since $\vec{x}$ fits $\Phi(\vec{x}\uhr l) = (T^q,n,P)$, $\vec{x}\setminus l$ is a branch through $T^q$, and there is some $k\geq n$ and $S$ such that $P(\vec{x}\uhr (l,l+k]), S)$ holds and $\vec{x}\setminus l+k$ is a branch through $S$. In particular, this means that $(\vec{x}\uhr l+k,S)$ is a direct extension of $p_r \fr \langle x_i \mid l<i\leq k\rangle$ that is in $D$. This condition must be in $G(\vec{x})$ as well. We conclude that $G(\vec{x})$ is generic.
\end{proof}

\section{More about Tensors}\label{Section:Tensors}


Given ultrafilters $U_1,U_2$ on $X_1,X_2$ respectively, the standard coordinate production maps $\pi_i : X_1 \times X_2 \to X_i$ are Rudin-Kiesler projections. In the ZFC setting this is known to imply that the tree Prikry forcing $\po(U_1 \ltimes U_2)$ projects to both $\po(U_1)$ and $\po(U_2)$.
We prove the same holds in our choiceless setting. We start by introducing a number of useful notations and conventions.

\begin{definition}(Pairing sequences)
\begin{itemize}
    \item For two sequence $s_1 = \la x_i\ra_{i < len(s_1)}$, $s_2 =\la y_i\ra_{i < len(s_2)}$ of the same length $len(s_1) = len(s_2)$  define
    \[s_1 \ptimes s_2 = \la (x_i,y_i) \ra_{i < len(s_1)}.
    \]


    \item Given sets $T_1 \subseteq X_1^{<\omega}$, $T_2 \subseteq X_2^{<\omega}$ define 
    \[
    T_1 \ptimes T_2 = \{ s_1 \ltimes s_2 \mid 
    s_1 \in T_1,\ s_2 \in T_2, \ len(s_1) = len(s_2)\}.
    \]
\end{itemize}    
\end{definition}

\begin{remark}
    Suppose $U_1,U_2$ are ultrafilters on sets $X_1,X_2$, respectively, $0< n < \omega$, and $Z_i \in U_i^n$ for $i = 1,2$. Then it is clear from the definition that $Z_1 \ptimes Z_2$ belongs to $U_1 \ltimes U_2$. Hence, if $T_1 \subseteq X_1^{<\omega}$ $T_2 \subseteq X_2^{<\omega}$ are trees so that each $T_i$ is $U_i$-large, then $T_1 \ptimes T_2$ is a $U_1 \ltimes U_2$-large tree.
\end{remark}

\begin{definition}\label{Def:Lifts}(Lifts in $\po(U_1 \ltimes U_2)$)\\
Let $U_1,U_2$ be ultrafilters on sets $X_1,X_2$, respectively.\\
Let $q_1 = (s_1,T_1) \in \po(U_1)$, $q_2 = (s_2,T_2) \in \po(U_2)$, and  $p = (t,T) \in \po(U_1 \ltimes U_2)$. 
\begin{enumerate}
    \item We say that $p$ is a \emph{simple (left) lift} of $q_1$ if $t = s_1 \ptimes t_2$ for some $t_2 \in X_2^{len(s_1)}$, and $T = T_1 \ptimes S_2$ for some $U_2$-large tree $S_2 \subseteq X_2^{<\omega}$.

    \item We say that $p$ is a \emph{simple (right) lift} of $q_2$ if  $t = t_1 \ptimes s_2$ for some $t_1 \in X_2^{len(s_2)}$, and $T = S_1 \ptimes T_2$ for some $U_2$-large tree $S_1 \subseteq X_1^{<\omega}$.

    \item For either $i = 1,2$, we say that $p$ is a (left/right) \emph{lift} of $q_i$ if there is a simple lift $q \in \po(U_1 \ltimes U_2)$ of $q_i$ such that $p$ extends $q$.
\end{enumerate}

\end{definition}

We will use the notion of a ``lift'' to prove that a Prikry generic filter for $\po(U_1 \ltimes U_2)$ naturally gives rise to generics for $\po(U_1)$ and $\po(U_2)$.

\begin{theorem}\label{THM:GenericLifts}
    Let $U_1,U_2$ be $\sigma$-complete ultrafilters on sets $X_1,X_2$, respectively. Suppose that $G \subseteq \po(U_1 \ltimes U_2)$ is a generic filter, then for both $i = 1,2$, the set
    \[G_i = \{ p' \in \po(U_i) \mid \exists p \in G, p \text{ is a lift of }p'\}\]
    is a generic filter for $\po(U_i)$.
\end{theorem}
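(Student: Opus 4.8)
The plan is to verify the three defining properties of a generic filter for $G_i$: that it is a filter (upward closed and downward directed), and that it meets every dense set. I will focus on $i = 1$; the case $i = 2$ is symmetric via the obvious right-hand analogues of all definitions. First I would record the basic monotonicity fact that the relation ``$p$ is a lift of $q_1$'' is well-behaved under extension in both coordinates: if $p'$ is a lift of $q_1$ and $p'' \leq p'$ in $\po(U_1 \ltimes U_2)$, then $p''$ is a lift of $q_1$ (immediate from clause (3) of Definition \ref{Def:Lifts}); and if $p$ is a simple left lift of $q_1$ and $r_1 \leq q_1$ in $\po(U_1)$, then there is a lift of $r_1$ below $p$ — obtained by replacing the $T_1$-component with $T^{r_1}$ and following the one-point extensions of $s^{r_1}$ over $s^{q_1}$ in the first coordinate, choosing arbitrary second coordinates from the $U_2$-large tree (using $\sigma$-completeness, or just the definition, to see these successor sets stay large). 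These two observations, together with the fact that $G$ is a filter, give upward closure of $G_1$ and will be reused for directedness.

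For the filter properties: \emph{upward closure} is essentially immediate — if $p' \in G_1$, witnessed by $p \in G$ a lift of $p'$, and $p' \leq q' $ in $\po(U_1)$, then I claim $p$ is also a lift of $q'$; this follows because a lift of $p'$ sits below a simple lift of $p'$, and one checks that a simple lift of $p'$ is itself a lift of $q'$ (the stem and tree of $p'$ end-extend/shrink those of $q'$ appropriately, and pairing with second coordinates preserves this). \emph{Downward directedness}: given $p'_0, p'_1 \in G_1$ witnessed by $p_0, p_1 \in G$, use that $G$ is directed to find $p \in G$ with $p \leq p_0, p_1$; then $p$ is a lift of both $p'_0$ and $p'_1$ by the monotonicity observation, and now I need a single condition $r' \in \po(U_1)$ below both $p'_0$ and $p'_1$ that $p$ is a lift of. Here I would extract $r'$ by ``projecting'' $p$: if $p = (t, T)$ with $t = t_1 \ptimes t_2$ and $T$ a $U_1 \ltimes U_2$-large tree refining a product $T_1 \ptimes S_2$, set $r' = (t_1, T_1)$ — one checks $r' \leq p'_0, p'_1$ and $p$ is a lift of $r'$. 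Then $r' \in G_1$ by definition, so $G_1$ is directed.

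For \emph{genericity}: let $D \subseteq \po(U_1)$ be dense open. Define $\hat D \subseteq \po(U_1 \ltimes U_2)$ to be the set of conditions $p$ that are lifts of some $q' \in D$; I claim $\hat D$ is dense. Given any $p = (t, T) \in \po(U_1 \ltimes U_2)$, first extend $p$ to a condition below a simple product-form condition (shrink $T$ to a product $T_1 \ptimes S_2$ using the Remark before Definition \ref{Def:Lifts} — one can always refine a $U_1 \ltimes U_2$-large tree to contain such a product subtree by a fusion-style argument, or more cheaply observe that $p$ already lies below the simple lift $(t_1 \ptimes t_2, T_1 \ptimes X_2^{<\omega})$ where $t_1, T_1$ are the first-coordinate projections); this exhibits $p$ as a lift of $q_1 := (t_1, T_1) \in \po(U_1)$. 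By density of $D$ pick $q' \leq q_1$ with $q' \in D$, then by the monotonicity observation there is a lift $p^* \leq p$ of $q'$, so $p^* \in \hat D$. Since $G$ meets $\hat D$, take $p \in G \cap \hat D$, a lift of some $q' \in D$; then $q' \in G_1$, and since $D$ is open and $G_1$ is upward closed — wait, openness means downward closed, so I actually need $q' \in D \cap G_1$ directly, which is exactly what we have. Hence $G_1 \cap D \neq \emptyset$.

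The step I expect to be the main obstacle is the density of $\hat D$, specifically the claim that an arbitrary $U_1 \ltimes U_2$-large tree $T$ has, below it, a condition of simple-lift (product) form — i.e.\ that one can refine $T$ to a $U_1$-large $T_1$ and a $U_2$-large $S_2$ with $T_1 \ptimes S_2 \subseteq T$ after possibly passing to a subtree, so that the original $p$ genuinely extends a simple lift. The cheap resolution is to not demand $T_1 \ptimes S_2 \subseteq T$ but only that $p$ \emph{extends} the simple lift $(t_1 \ptimes t_2,\, T_1 \ptimes X_2^{<\omega})$ with $T_1 = \{u_1 : \exists u_2,\ u_1 \ptimes u_2 \in T\}$ (the first-coordinate projection of $T$), which is automatically $U_1$-large because $\suc_T$ of any node lies in $U_1 \ltimes U_2$ and hence projects into $U_1$; then $T \subseteq T_1 \ptimes X_2^{<\omega}$ trivially and ``extends'' in Definition \ref{Def:Lifts}(3) only requires $T \subseteq$ the simple lift's tree and stem extension, which holds. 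This makes the obstacle evaporate, and the rest is the bookkeeping sketched above.
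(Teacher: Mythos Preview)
Your argument is correct, but it takes a genuinely different route from the paper's proof. The paper does not verify filter and genericity properties of $G_i$ directly; instead it identifies $G_i$ with the filter $G(\vec{x}^i)$ generated by the $i$-th coordinate sequence $\vec{x}^i$ of the generic sequence $\vec{x} = \vec{x}^1 \ptimes \vec{x}^2$, and then establishes genericity of $\vec{x}^i$ via the choiceless Mathias criterion (Theorem~\ref{THM:AlternativeMathias}): given a GPS $\Phi_i$ for $U_i$, it lifts $\Phi_i$ to a GPS $\Phi$ for $U_1 \ltimes U_2$ using Lemma~\ref{Lem:GPSLift}, applies the Mathias criterion to $\vec{x}$ against $\Phi$, and reads off that $\vec{x}^i$ fits $\Phi_i$.

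Your approach is more elementary and self-contained: you never invoke the Mathias/GPS machinery, and your ``cheap resolution''---that every condition $(t,T)$ already sits below the simple lift $(t_1 \ptimes t_2,\, T_1 \ptimes X_2^{<\omega})$ of its coordinate projection $(t_1,T_1)$, because $T \subseteq T_1 \ptimes X_2^{<\omega}$ trivially---is exactly what makes the direct dense-set lifting go through without any product-form refinement of trees. In fact your argument does not seem to use $\sigma$-completeness at all, whereas the paper's route does (through the weak Strong Prikry property underlying the Mathias criterion). The trade-off is that the paper's proof exercises and motivates the GPS framework of Section~\ref{Section:Mathias}, which is part of the point of that section; your argument bypasses it entirely. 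One small remark: the case $i=2$ is not literally ``symmetric'' since $U_1 \ltimes U_2$ is an asymmetric construction, but the second-coordinate projection of a set in $U_1 \ltimes U_2$ is still in $U_2$ (pick any $x_1$ in the $U_1$-large set of good first coordinates and use that $Z_{x_1} \in U_2$), so the analogous argument goes through.
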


To prove Theorem \ref{THM:GenericLifts} we first extend the notion of a lift to include lifts of the Subtree Predicate Systems and Global Predicate Systems from Section \ref{Section:Mathias}.
The following is an immediate consequence of the relevant definitions. 
\begin{lemma}\label{Lem:GPSLift}(Extending Lifts)\\
Let $U_1,U_2$ be ultrafilters on sets $X_1,X_2$, respectively.
\begin{enumerate}

\item For a $U_1$-SPS $(T_1,n,P_1)$, let $(T,n,P)$ be given by 
\( T = T_1 \ptimes X_2^{<\omega} \)
and 
\( P\left( s_1 \ptimes s_2, \ S_1 \ptimes T_2 \right) \)  for every $s_1 \in X_1^{<\omega}$ and $S_1 \subseteq X_1^{<\omega}$  with  $P_1(s_1,S_1)$, and every $s_2 \in X_2^{len(s_1)}$ and a $U_2$-large tree $T_2 \subseteq X_2^{<\omega}$.
$(T,n,P)$ is a  $U_1 \ltimes U_2$-SPS, which we refer to as the left lift of $(T_1,n,P_1)$ with $U_2$.

\item For a $U_2$-SPS $(T_2,n,P_2)$, let $(T,n,P)$ be given by 
\( T = X_1^{<\omega} \ptimes T_2 \)
and 
\( P\left( s_1 \ptimes s_2, \ X_1^{<\omega} \ptimes  S_2 \right) \) for every $s_2 \in X_2^{<\omega}$ and $S_2 \subseteq X_2^{<\omega}$  with  $P_2(s_2,S_2)$, and every $s_1 \in X_1^{len(s_2)}$ and a $U_1$-large tree $T_1 \subseteq X_1^{<\omega}$.
$(T,n,P)$ is a  $U_1 \ltimes U_2$-SPS, which we refer to as the right lift of $(T_2,n,P_2)$ with $U_1$.

\item 
Given a GPS for $U_1$, $\Phi_1$ define its right lift with $U_2$ be the the the function $\Phi$ such that for each $\ell < \omega$ and 
$s_1 \ptimes s_2 \in (X_1 \times X_2)^{\ell}$, $\Phi(s_1 \ptimes s_2)$ is the right lift with $U_2$ of the $U_1$-SPS $\Phi_1(s_1)$.
$\Phi$ is a GPS for $U_1 \ltimes U_2$.

\item 
Given a GPS for $U_2$, $\Phi_2$ define its left lift with $U_1$ be the the the function $\Phi$ such that for each $\ell < \omega$ and 
$ s_1 \ptimes s_2 \in (X_1 \times X_2)^{\ell}$, $\Phi(s_1 \ptimes s_2)$ is the right lift with $U_1$ of the $U_2$-SPS $\Phi_2(s_2)$.
$\Phi$ is a GPS for $U_1 \ltimes U_2$.
\end{enumerate}
\end{lemma}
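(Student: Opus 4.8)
\textbf{Proof plan for Lemma \ref{Lem:GPSLift}.}

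The plan is to verify each of the four items directly from the definitions of a $U$-SPS and a GPS, checking the required closure and nonemptiness conditions component by component. For item (1), I would first observe that $T = T_1 \ptimes X_2^{<\omega}$ is a $U_1 \ltimes U_2$-large tree: this is the content of the Remark preceding Definition \ref{Def:Lifts}, applied with $T_2 = X_2^{<\omega}$ (which is trivially $U_2$-large). Next I would check that for every node $s_1 \ptimes s_2 \in T$ of length at least $n$, the collection $\{ S \mid P(s_1 \ptimes s_2, S) \}$ is nonempty and downwards closed. Nonemptiness follows because $\{ S_1 \mid P_1(s_1, S_1) \}$ is nonempty (as $(T_1,n,P_1)$ is a $U_1$-SPS), and pairing any witness $S_1$ with $T_2 = X_2^{<\omega}$ gives a witness for $P$. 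For downward closure, I would note that a sub-tree $S$ of $T_{s_1 \ptimes s_2}$ that is $U_1 \ltimes U_2$-large and of the form $S_1 \ptimes T_2$ has, for each $S' \subseteq S$ of the same product form $S'_1 \ptimes T'_2$, that $S'_1 \subseteq S_1$ and $T'_2 \subseteq T_2$; downward closure of $\{ S_1 \mid P_1(s_1,S_1)\}$ then gives $P_1(s_1, S'_1)$, hence $P(s_1 \ptimes s_2, S')$. (One should note that $P$ is only declared to hold on sub-trees of product form; it is implicitly false, or vacuous, on others — I would make this reading explicit in one sentence to keep the statement well-defined.) Item (2) is the mirror image of item (1) with the roles of the two coordinates swapped, so its proof is identical after relabeling.

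For items (3) and (4), which assemble a GPS out of SPS-lifts, the work reduces to checking that the domain is correct and that each value is a $U_1 \ltimes U_2$-SPS. The domain of $\Phi$ is $(X_1 \times X_2)^{<\omega}$, and every element of $(X_1 \times X_2)^\ell$ can be written uniquely as $s_1 \ptimes s_2$ with $s_1 \in X_1^\ell$, $s_2 \in X_2^\ell$ (this is just the componentwise decomposition of a sequence of pairs), so $\Phi$ is well-defined and total. That each value $\Phi(s_1 \ptimes s_2)$ is a $U_1 \ltimes U_2$-SPS is exactly items (1) or (2) applied to the SPS $\Phi_1(s_1)$ (resp. $\Phi_2(s_2)$), which is a $U_1$-SPS (resp. $U_2$-SPS) by hypothesis that $\Phi_1$ (resp. $\Phi_2$) is a GPS. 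So items (3) and (4) are immediate corollaries of items (1) and (2). (I would also flag the evident typos in the statement of items (3) and (4): "right lift with $U_2$" in (4) should read "left lift with $U_1$", matching the name of the construction being defined.)

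I do not expect a serious obstacle here — as the excerpt itself says, this is "an immediate consequence of the relevant definitions." The only point requiring any care is bookkeeping: one must be consistent about the fact that lifted predicates $P$ are only asserted on sub-trees in product form $S_1 \ptimes T_2$ (resp. $X_1^{<\omega} \ptimes S_2$), and that downward closure of $\{ S \mid P(t,S)\}$ is therefore downward closure within the class of product-form sub-trees — unless one wishes to extend $P$ to be false off that class, which is cleaner and is the reading I would adopt. With that convention fixed, all four verifications are routine unpackings, and I would present them compactly, proving (1) in full, remarking that (2) is symmetric, and deriving (3) and (4) as one-line consequences.
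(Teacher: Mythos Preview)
Your proposal is correct and matches the paper exactly: the paper gives no proof at all, simply declaring the lemma ``an immediate consequence of the relevant definitions,'' and your plan is precisely the routine unpacking of those definitions that this phrase promises. One small caution on the bookkeeping point you flag: declaring $P$ \emph{false} off the product-form class does not make $\{S \mid P(t,S)\}$ downward closed in the literal sense of the SPS definition (a non-product $U_1\ltimes U_2$-large subtree of some $S_1 \ptimes T_2$ would witness failure); the cleaner convention is to let $P(t,S)$ hold whenever $S$ is contained in some $S_1 \ptimes T_2$ with $P_1(s_1,S_1)$, which is downward closed by construction and still yields exactly what is needed in the proof of Theorem~\ref{THM:GenericLifts}.
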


\begin{proof}(Theorem \ref{THM:GenericLifts})\\
Let $\vec{x}  = \bigcup \{ s^p \mid p \in G\}$ be the $\po(U_1\ltimes U_2)$-Prikry generic sequence associated with $G$. 
Write $\vec{x} = \vec{x}^1 \ptimes \vec{x}^2$ where for each $i = 1,2$, $\vec{x}^i \in X_i^\omega$.
We start by verifying that for each $i = 1,2$, $G_i = G(\vec{x}^i)$ is the filter derived from $\vec{x}^i$. We do this for $i = 1$ as the case $i = 2$ is essentially the same. 
Suppose that $p' = (\vec{x}^1\uhr k,T_1) \in G(\vec{x}^i)$.
Then $p = (\vec{x}^1\uhr k \ptimes \vec{x}^2\uhr k, T_1\ptimes X_2^\omega)$ is a left lift of $p'$ and clearly belongs to $G(\vec{x})= G$. Hence $p' \in G_1$. 
Suppose now that $p' = (s_1,T_1) \in G_1$ and let $p = (s,T) \in G$ be a lift of $p'$. We may assume that $p$ is a simple lift of $p'$ and therefore $s = s_1 \ptimes s_2$, $T = T_1 \ptimes T_2$. Let $k = len(s_1)$. Having $p \in G = G(\vec{x})$ we conclude that $s = \vec{x}\uhr k$ and $\vec{x}\setminus k \in [T]$. The last implies 
$s_1 = \vec{x}^1\uhr k$ and $\vec{x}^1\setminus k \in [T_1]$, witnessing $p' \in G(\vec{x}^1)$. 

Having verified $G_i = G(\vec{x}^i)$ is a filter, it remains to show it is generic, or equivalently, 
that $\vec{x}^i$ is a Prikry-generic sequence. 
For this, we verify that $\vec{x}^i$ satisfies the choiceless Mathias Criterion (Theorem \ref{THM:AlternativeMathias}).
Let $\Phi_i$ be a GPS for $U_i$, and $\Phi$ be its lift to $U_1 \ltimes U_2$ from Lemma \ref{Lem:GPSLift}.
As $\vec{x} = \vec{x}^1 \ptimes \vec{x}^2$ is $\po(U_1 \ltimes U_2)$-generic, there is $m < \omega$ such that
$\vec{x}\setminus m$ fits the $U_1 \ltimes U_2$-SPS $\Phi(\vec{x}\uhr m)$. 
It now follows from the definition of the lift $\Phi$ that $\vec{x}^i\setminus m$ fits the $U_1$-SPS $\Phi_i(\vec{x}^i\uhr m)$. 
\end{proof}

\subsection{When $\po(U_1 \ltimes U_2)$ is equivalent to $\po(U_1) \times \po(U_2)$}



The following is shown in \cite{goldberg:measurablecardinals}.
\begin{lemma}${}$
\begin{enumerate}
    \item If $\kappa$ is almost super compact then $\aleph(\power(Z)) < \kappa$ for every $Z \in V_\kappa$.
    
    \item If $U$ is a $\kappa$-complete ultrafilter then $U$ is $Y$-closed for every set $Y$ with $\aleph(Y) < \kappa$. 
\end{enumerate}
\end{lemma}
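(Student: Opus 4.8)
### Proof proposal

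The plan is to prove both parts by invoking the elementary embeddings supplied by almost supercompactness, using the fact that $\aleph(\cdot)$ is monotone under surjections and behaves well under elementarity.

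For part (1), fix $Z \in V_\kappa$, say $Z \in V_{\bar\eta}$ for some $\bar\eta < \kappa$, and suppose toward a contradiction that $\aleph(\power(Z)) \geq \kappa$, i.e.\ there is an injection $\kappa \hookrightarrow \power(Z)$. Using that $\kappa$ is almost supercompact, pick $\eta$ large enough that this injection and all relevant witnesses lie in $V_\eta$, and take an elementary $\pi : V_{\bar\eta'} \to V_\eta$ with $\bar\eta' < \kappa$ and $\pi$ fixing the ordinal parameter coding $Z$ (and fixing $\bar\eta$, so $Z \in \mathrm{rng}(\pi)$ and $\pi(Z) = Z$ since $Z \in V_{\bar\eta}$ and $\pi \uhr V_{\bar\eta} = \mathrm{id}$ by a criticality argument). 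Then $\power(Z)$ as computed in $V_{\bar\eta'}$ maps under $\pi$ onto a subset of the true $\power(Z)$; the point is that $\pi$ restricted to $\power(Z)^{V_{\bar\eta'}}$ is already the identity (since $Z$ and hence each of its subsets has small rank, below $\mathrm{crit}(\pi)$), so $\power(Z)^{V_{\bar\eta'}} = \power(Z)$. By elementarity $\bar\eta' < \kappa$ bounds $\aleph(\power(Z))$ as computed in $V_{\bar\eta'}$, which equals the true $\aleph(\power(Z))$ since the relevant injections are absolute here; hence $\aleph(\power(Z)) < \kappa$, contradicting the assumption. The main subtlety is making precise that $\pi$ acts as the identity on $\power(Z)$ and that $\aleph(\power(Z))$ is correctly computed in the small model $V_{\bar\eta'}$ — this is where one must be careful that $\bar\eta'$ is chosen large enough relative to $Z$ (e.g.\ $\bar\eta' > $ the rank of any candidate bijection/injection between an ordinal and $\power(Z)$) while still below $\kappa$, which is exactly what the definition of almost supercompactness delivers by taking $\eta$ large.

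For part (2), suppose $U$ is $\kappa$-complete and $Y$ is a set with $\aleph(Y) < \kappa$; we must show that for any sequence $\langle A_y \mid y \in Y \rangle \subseteq U$ we have $\bigcap_{y \in Y} A_y \in U$. Since $\aleph(Y) < \kappa$ there is an injection $Y \hookrightarrow \aleph(Y)$, hence (transporting along this injection) a well-ordering of $Y$ in order type some ordinal $\rho < \kappa$. This lets us re-index the family as $\langle A_\alpha \mid \alpha < \rho \rangle \subseteq U$, and now $\kappa$-completeness of $U$ applies directly: $\bigcap_{\alpha < \rho} A_\alpha \in U$. Translating back, $\bigcap_{y \in Y} A_y \in U$, so $U$ is $Y$-closed. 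This part is essentially routine once one observes that $\kappa$-completeness is stated for indexing by sets of size $<\kappa$ and that a set injecting into an ordinal $<\kappa$ can be well-ordered in order type $<\kappa$.

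I expect the only real obstacle to be in part (1): pinning down that the embedding $\pi$ from almost supercompactness can be arranged to be the identity on $V_{\bar\eta}$ (so that it fixes $Z$ and all of $\power(Z)$ pointwise) while having its target $V_\eta$ contain whatever injection $\kappa \hookrightarrow \power(Z)$ we are trying to refute — this is the standard "reflect a counterexample into a small model" move, and it relies on the remark-style observation (analogous to Remark~\ref{RMK:Fixing-p}) that one may pass to a coding ordinal to force $\pi$ to fix the needed parameters, combined with the fact that an elementary $\pi : V_{\bar\eta'} \to V_\eta$ has critical point above any ordinal it fixes.
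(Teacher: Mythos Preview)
The paper does not prove this lemma; it attributes both parts to \cite{goldberg:measurablecardinals} and states them without argument. So there is no in-paper proof to compare against --- the question is just whether your argument is correct.

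Part~(2) contains a genuine error. From $\aleph(Y)<\kappa$ you conclude ``there is an injection $Y \hookrightarrow \aleph(Y)$'' and then well-order $Y$ in order type below $\kappa$. But by the paper's definition $\aleph(Y)$ is the Hartogs number: the least ordinal \emph{not injecting into} $Y$. The hypothesis $\aleph(Y)<\kappa$ says only that some ordinal below $\kappa$ fails to embed into $Y$; it gives no injection from $Y$ into any ordinal, and without choice $Y$ need not be well-orderable at all (e.g.\ an infinite Dedekind-finite set has $\aleph(Y)=\omega$). Your re-indexing step is therefore unavailable, and with it the direct appeal to $\kappa$-completeness. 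The content of the lemma is precisely that closure under \emph{ordinal}-indexed intersections of length $<\kappa$ transfers to closure under intersections indexed by arbitrary sets of small Hartogs number; this needs an actual argument, not a relabeling.

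Part~(1) is closer in spirit to a valid reflection argument but also has gaps. The claim ``$\pi\uhr V_{\bar\eta}=\mathrm{id}$ by a criticality argument'' is not justified: the definition of almost supercompactness only guarantees that $\pi$ fixes one prescribed ordinal $\alpha<\kappa$, not that it is the identity on any rank-initial segment, and without choice you cannot ``code $Z$ by an ordinal'' to force $Z$ into the range of $\pi$. The outline can be repaired: fix $\alpha=\gamma$ with $Z\in V_\gamma$, so that $\pi(V_\gamma)=V_\gamma$ as a set, and argue purely by elementarity --- if $V_{\bar\eta'}$ believed every ordinal injects into $\power(W)$ for some $W\in V_\gamma$, then $V_\eta$ would too, which is impossible once $\eta$ is taken above $\aleph(V_{\gamma+1})$.
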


 The following is an extension of a ZFC result concerning the connection between tensor products and standard products of ultrafilters.  Blass \cite{BlassPhD} studied products of ultrafilters. The relation between products and tensors was recently studied by Goldberg in \cite{GoldbergProduct}, where the following is observed.

\begin{lemma}(\cite{GoldbergProduct})\label{Lem:ProdInsideTensor}
Suppose that $U_1,U_2$ are ultrafilters on sets $X_1,X_2$, respectively. If $U_2$ is $X_1$-closed then for every set $A \in U_1 \ltimes U_2$ there are sets $B_1\in U_1,B_2 \in U_2$, which are definable from $A,X_1,U_2$, such that $B_1 \times B_2 = \{ (x_1,x_2) \mid x_1 \in B_1, x_2 \in B_2\}$ is contained in $A$. 
\end{lemma}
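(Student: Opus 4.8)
The plan is to prove this by directly mimicking the standard ZFC argument that a product filter is contained in the tensor, but paying careful attention to where closure of $U_2$ (rather than choice) is needed and ensuring every choice made along the way is replaced by a canonical/definable selection.

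First I would unpack the hypothesis: $A \in U_1 \ltimes U_2$ means $B_1 := \{ x_1 \in X_1 \mid A_{x_1} \in U_2\} \in U_1$, where $A_{x_1} = \{ x_2 \mid (x_1,x_2) \in A\}$. This $B_1$ is manifestly definable from $A$, $U_2$, and $X_1$ (it only refers to membership in $U_2$ of the sections of $A$). So the first coordinate of the desired rectangle is handed to us for free, with no use of closure. The work is entirely in producing a single set $B_2 \in U_2$ with $B_1 \times B_2 \subseteq A$, i.e. a set $B_2 \in U_2$ such that $B_2 \subseteq A_{x_1}$ for every $x_1 \in B_1$.

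The natural candidate is $B_2 := \bigcap_{x_1 \in B_1} A_{x_1}$. For every $x_1 \in B_1$ we have $A_{x_1} \in U_2$ by definition of $B_1$, so $\langle A_{x_1} \mid x_1 \in B_1 \rangle$ is a $B_1$-indexed family of sets in $U_2$. Since $U_2$ is $X_1$-closed and $B_1 \subseteq X_1$, the restriction of this family to $B_1$ still has its intersection in $U_2$ — here I should note that $X_1$-closedness gives closure under intersections indexed by $X_1$, hence also under intersections indexed by any subset of $X_1$ (pad with $X_2 \in U_2$ on the missing indices, or observe directly that a sub-intersection of an $X_1$-intersection of $U_2$-sets, together with the full intersection, is again in $U_2$ since $U_2$ is a filter closed under the relevant operation; cleanest is to set $A_{x_1} := X_2$ for $x_1 \in X_1 \setminus B_1$ so the family is genuinely $X_1$-indexed and its intersection is unchanged). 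Thus $B_2 \in U_2$. By construction $B_2 = \bigcap_{x_1 \in B_1} A_{x_1}$ is definable from $A$, $X_1$, $B_1$ — hence from $A$, $X_1$, $U_2$ — and for any $(x_1,x_2) \in B_1 \times B_2$ we have $x_2 \in B_2 \subseteq A_{x_1}$, i.e. $(x_1,x_2) \in A$. So $B_1 \times B_2 \subseteq A$, completing the proof.

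The only subtle point — and the one I would be most careful about — is the reduction from an intersection indexed by the arbitrary subset $B_1$ to one indexed by all of $X_1$, so that the definition of $X_1$-closedness applies on the nose; the padding trick with $X_2$ resolves it cleanly and keeps everything definable (no choice, since we never select representatives — $B_1$ and $B_2$ are canonical sets). I expect this to be short; there is no real obstacle once the definability bookkeeping is set up, which is presumably why the excerpt attributes it to Goldberg as an "observed" lemma.
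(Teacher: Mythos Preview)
Your proof is correct and is exactly the standard argument: take $B_1 = \{x_1 \in X_1 \mid A_{x_1} \in U_2\}$ and $B_2 = \bigcap_{x_1 \in B_1} A_{x_1}$, using $X_1$-closedness of $U_2$ for the latter. The paper does not supply its own proof of this lemma; it simply records the statement with a citation to Goldberg, so there is nothing further to compare.
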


It follows from the assumption of the Lemma that the product filter $U_1 \times U_2$, generated by the set of cartesian products $B_1 \times B_2$, $B_i \in U_i$, is the ultrafilter $U_1 \ltimes U_2$. In particular, the Prikry forcing $\po(U_1 \ltimes U_2) = \po(U_1 \times U_2)$. 
It is therefore natural to compare $\po(U_1 \ltimes U_2)$ with the product forcing $\po(U_1) \times \po(U_2)$. 

\begin{lemma}\label{Lem:TensorProd2Prod}
    Suppose that $U_1,U_2$ are $\sigma$-complete ultrafilters on sets $X_1,X_2$, respectively. If $U_2$ is $\power(X_1)$-closed then the forcing posets $\po(U_1 \ltimes U_2)$ and $\po(U_1) \times \po(U_2)$ are equivalent. 
\end{lemma}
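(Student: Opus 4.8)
The plan is to find isomorphic dense subsets of the two posets. Say that a condition $(t,T)\in\po(U_1\ltimes U_2)$ is \emph{rectangular} if $t=s_1\ptimes s_2$ and $T=T_1\ptimes T_2$ with $T_1,T_2$ (necessarily $U_1$- resp.\ $U_2$-large) trees, and let $\mathrm{Rect}$ be the set of rectangular conditions. On the other side, let $D^\times\subseteq\po(U_1)\times\po(U_2)$ be the set of pairs $((s_1,T_1),(s_2,T_2))$ with $|s_1|=|s_2|$. Since every node of a $U_i$-large tree has nonempty successor set, a condition in $\po(U_1)\times\po(U_2)$ can be one-point-extended in the coordinate with the shorter stem until the two stems have equal length, so $D^\times$ is dense in $\po(U_1)\times\po(U_2)$. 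Now map $\Psi\colon D^\times\to\po(U_1\ltimes U_2)$ by $\Psi((s_1,T_1),(s_2,T_2))=(s_1\ptimes s_2,\ T_1\ptimes T_2)$; by the remark following the definition of $\ptimes$ the tree $T_1\ptimes T_2$ is $U_1\ltimes U_2$-large, and the image of $\Psi$ is exactly $\mathrm{Rect}$. The point is that $\Psi$ is an order-isomorphism onto $\mathrm{Rect}$: a stem $s_1\ptimes s_2$ decomposes uniquely, and $T_1,T_2$ are recovered from $T_1\ptimes T_2$ as coordinate projections, e.g.\ $T_1=\{u_1:\exists u_2\in X_2^{|u_1|}\ u_1\ptimes u_2\in T_1\ptimes T_2\}$ (using that the $U_2$-large $T_2$ has branches of every finite length), so $\Psi$ is a bijection onto $\mathrm{Rect}$; and with this decomposition available, the three clauses of ``$q\le p$'' in $\po(U_1\ltimes U_2)$ (stems in the initial-segment relation, the tail a branch through the tree, tree-inclusion) hold iff they hold in each coordinate, i.e.\ iff $\Psi^{-1}(q)\le\Psi^{-1}(p)$ in $\po(U_1)\times\po(U_2)$.

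It remains to prove the key density statement: $\mathrm{Rect}$ is dense in $\po(U_1\ltimes U_2)$. Given any $(t,T)$, write $t=s_1\ptimes s_2$; it suffices to find a $U_1$-large $T_1$ and a $U_2$-large $T_2$ with $T_1\ptimes T_2\subseteq T$, for then $(s_1\ptimes s_2,\,T_1\ptimes T_2)$ is a (rectangular) direct extension of $(t,T)$. So the real content is: \textbf{every $U_1\ltimes U_2$-large tree $T$ contains a $U_1\ltimes U_2$-large subtree of the form $T_1\ptimes T_2$.} The argument I would give goes through tensor powers. First, by induction on $n$, $(U_1\ltimes U_2)^n=U_1^n\ltimes U_2^n$ under the canonical bijection $(X_1\times X_2)^n\cong X_1^n\times X_2^n$: since $U_2$ is $\power(X_1)$-closed, hence $X_1$-closed, an easy induction shows each $U_2^m$ is $X_1$-closed, so each product filter $U_2^m\times U_1$ is an ultrafilter and the reordering of tensor factors required in the inductive step is legitimate. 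Next, by Remark \ref{Remark:ProductBasics}(3) the $n$-th level $A_n:=T\cap(X_1\times X_2)^n$ belongs to $(U_1\ltimes U_2)^n=U_1^n\ltimes U_2^n$; applying Lemma \ref{Lem:ProdInsideTensor} to the ultrafilters $U_1^n$ and $U_2^n$ --- here one needs $U_2^n$ to be $X_1^n$-closed, which follows from $U_2$ being $\power(X_1)$-closed once we assume (as done elsewhere in the paper) that $X_1$ is a cardinal, so that $X_1^n$ injects into $X_1$ --- produces $C_1^n\in U_1^n$ and $C_2^n\in U_2^n$ with $C_1^n\times C_2^n\subseteq A_n$. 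By Remark \ref{Remark:ProductBasics}(3) again, $C_i^n$ is the set of maximal branches of a $U_i$-large tree $S_i^n\subseteq X_i^{\le n}$; augmenting $S_i^n$ by throwing in every sequence extending one of its level-$n$ nodes yields a $U_i$-large tree $\hat S_i^n\subseteq X_i^{<\omega}$, and I set $T_i:=\bigcap_{n\ge1}\hat S_i^n$, which is $U_i$-large by $\sigma$-completeness of $U_i$. Finally $T_1\ptimes T_2\subseteq T$: if $u_1\ptimes u_2\in T_1\ptimes T_2$ with $|u_1|=|u_2|=k$, then $u_1$ is a level-$k$ node of $\hat S_1^k$, forcing $u_1\in C_1^k$, and likewise $u_2\in C_2^k$, whence $u_1\ptimes u_2\in C_1^k\times C_2^k\subseteq A_k\subseteq T$.

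Combining the two steps, $\mathrm{Rect}$ is dense in $\po(U_1\ltimes U_2)$ and, via $\Psi^{-1}$, order-isomorphic to $D^\times$, which is dense in $\po(U_1)\times\po(U_2)$; since posets with isomorphic dense subsets yield the same generic extensions, this proves the lemma. The main obstacle is the density of $\mathrm{Rect}$, i.e.\ the rectangularization of an arbitrary $U_1\ltimes U_2$-large tree; inside that argument the fiddly points are establishing $(U_1\ltimes U_2)^n=U_1^n\ltimes U_2^n$ and tracking exactly which closure of $U_2$ (hence which use of the $\power(X_1)$-closed hypothesis, together with the harmless reduction to $X_1$ a cardinal) is needed to lift Lemma \ref{Lem:ProdInsideTensor} to the powers $U_1^n,U_2^n$ and to form the countable intersections defining the $T_i$.
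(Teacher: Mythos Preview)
Your argument is correct, and the overall framework---defining the map $\Psi$ from equal-stem-length pairs onto the rectangular conditions, and reducing the lemma to the density of $\mathrm{Rect}$---is exactly the paper's. The difference lies in how the rectangularization of an arbitrary $U_1\ltimes U_2$-large tree $T$ is carried out. The paper works node by node: it applies Lemma~\ref{Lem:ProdInsideTensor} to each successor set $\suc_T(t)$ to produce definable rectangles $B_1(t)\times B_2(t)$, and then runs an explicit induction on levels, at each stage using the $\power(X_1)$-closedness of $U_2^n$ to stabilize the $\power(X_1)$-valued map $t_2\mapsto B_1(t_1\ltimes t_2)$ simultaneously for all $t_1$; the rectangular subtrees are then assembled from the resulting sets $B_{1,n},B_{2,n}$. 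You instead go through the global identity $(U_1\ltimes U_2)^n\cong U_1^n\ltimes U_2^n$ and apply Lemma~\ref{Lem:ProdInsideTensor} once at each level to the full set $A_n=T\cap(X_1\times X_2)^n$, then intersect.

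Your route is more conceptual and, as it happens, needs less: the commutation $U_2^m\ltimes U_1\cong U_1\ltimes U_2^m$ and the hypothesis of Lemma~\ref{Lem:ProdInsideTensor} for $U_1^n,U_2^n$ both follow from $U_2$ being merely $X_1$-closed (since $U_2^m$ is then $X_1$-closed by an easy induction, and $X_1$-closed implies $X_1^n$-closed by currying the index set). So your parenthetical about reducing to $X_1$ a cardinal is unnecessary, and you have in fact proved the lemma under the weaker hypothesis that $U_2$ is $X_1$-closed. The paper's level-by-level stabilization genuinely uses the stronger $\power(X_1)$-closedness. What the paper's approach buys is that it avoids the detour through the tensor-power identity and keeps everything at the level of successor sets, which some readers may find more transparent.
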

\begin{proof}
    We verify there is a dense embedding $i : \po(U_1) \times \po(U_2) \to \po(U_1\ltimes U_2)$. We take $\dom(i)$ to be the dense subset of
    $\po(U_1) \times \po(U_2) $ consisting of all pairs $\la p^1,p^2\ra = \la (s^1,T^1), (s^2,T^2)\ra$ with $len(s^1) = len(s^2)$, and define
    
    \[i(\la p^1,p^2\ra) = (s^1 \ltimes s^2, T^1 \ltimes T^2) \in \po(U_1 \ltimes U_2)\]

    It is clear that $i$ is an order preserving and injective. It therefore remains to show that the image of $i$ is dense in $\po(U_1 \ltimes U_2)$. \\
    To this end, let $(s, T) \in \po(U_1 \ltimes U_2)$. 
    Clearly, $s = s^1 \ltimes s^2$ for some $s^i \in [X_i]^{len(s)}$, $i = 1,2$.
    We need to find trees $T^i$, $i = 1,2$, that are $U_i$-large, respectively, such that $T^1 \ltimes T^2 \subseteq T$. 
    For each sequence $t = t_1 \ltimes t_2 \in T$, denote the sets resulting from Lemma \ref{Lem:ProdInsideTensor} with respect to the successor set $\suc_T(t) \in U_1 \ltimes U_2$ by $B_i(t) \in U_i$, $i = 1,2$. By the same lemma, $B_1(t),B_2(t)$ are definable from $T,t,U_1,U_2$.
    We use these sets to define sets $B_{1,n} \in U_1^n$ and $B_{2,n}\in U_2^n$ for each $n$, satisfying 
\begin{itemize}
    \item $B_{1,n} \times B_{2,n} \subseteq T \cap (X_1 \times X_2)^n$.

    \item for every $i =1,2$ and $m < n$, $B_{i,n} \cap X_i^m \subseteq B_{i,m}$.
\end{itemize}

Starting from $n = 1$, define $B_{i,1} = B_i(\la \ra)$ for $i = 1,2$. Then $B_{1,1} \times B_{2,1} \subseteq \suc_T(\la \ra) = T \cap (X_1 \times X_2)$.\\ 
Suppose that $B_{1,n},B_{2,n}$ have been defined for some $n \geq 1$ and satisfy the conditions listed above. 
For each $t_2 \in B_{2,n}$, the sets
$B_2({t_1 \ltimes t_2})$, $t_1 \in B_{1,n}$, are all members of $U_2$. Since $U_2$ is $X_1$-closed, the intersection $B_{2,t_2} = \bigcap_{t_1 \in B_{1,n}} B_2(t_1 \ltimes t_2)$ belongs to $U_2$.

Next, we have a definable assignment, mapping each $t_2 \in B_{2,n}$ to a function $t_1 \mapsto B_{1}(t_1 \ltimes t_2)$, from $B_{1,n}$ to $U_1 \subseteq \power(X_1)$.
Since $U_2$ is $\power(X_1)$-closed then so is $U_2^n$. Hence, there are $B_{2,n}^* \subseteq B_{2,n}$, $B^*_{2,n} \in U_2^n$, and a map $t_1 \mapsto B_{1,t_1}$ from $B_{1,n}$ to $U_1$ such that 
$B_{1,t_1} = B_{1}(t_1 \ltimes t_2)$ for every $t_2 \in B_{2,n}^*$ and $t_1 \in B_{1,n}$. 
We conclude that the sets
\[B_{1,n+1} = \{ t_1 \fr \la x_1\ra \mid t_1 \in B_{1,n}, x_1 \in B_{1,t_1}\}\]

and 
\[
B_{2,n+1} = \{ t_2 \fr \la x_2\ra \mid t_2 \in B_{2,n}^*, x_2 \in B_{2,t_2}\}
\]
belong to $U_1^{n+1},U_2^{n+1}$, respectively, and satisfy the inductive conditions listed above.  \\

\noindent
This conclude the construction of the set $B_{1,n},B_{2,n}$, $n < \omega$. 
We finally define trees
$T^1 \subseteq X_1^{<\omega}$, $T^2 \subseteq X_2^{<\omega}$ by 
\[T^i = \{ t \in X_i^{<\omega} \mid \forall n \geq len(t) \  \exists t'  \in B_{i,n} \  t = t'\uhr len(t) \}.\]
Since $U_1,U_2$ are $\sigma$-closed, each $T^i$, $i = 1,2$ is a $U_i$-large tree.
Our listed properties of the sets $B_{1,n},B_{2,n}$, $n < \omega$, guarantee that $T^1 \ltimes T^2 \subseteq T$. 
\end{proof}

\begin{lemma}
    Suppose that $U_1,U_2$ are $\sigma$-complete ultrafilters on infinite sets $X_1,X_2$, respectively, and $Z$ is a set. 
    If $U_2$ is $\power(X_1) \times Z$-closed then for every generic filter $G \subseteq \po(U_1 \ltimes U_2)$-generic filter, all subsets of $Z$ in $V[G]$ belong to the intermediate generic extension $V[G_1]$ by the $G$-induced generic filter $G_1 \subseteq \po(U_1)$ 
    from Theorem \ref{THM:GenericLifts}.
\end{lemma}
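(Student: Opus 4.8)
The plan is to realize $\po(U_1\ltimes U_2)$ as the product $\po(U_1)\times\po(U_2)$ via Lemma~\ref{Lem:TensorProd2Prod}, to match the resulting factorization of $G$ with the lift-induced filter $G_1$ of Theorem~\ref{THM:GenericLifts}, and then to quote Corollary~\ref{COR:NoNewSmallSets} over the $\po(U_1)$-extension.

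First I would dispose of the trivial case $Z=\emptyset$; otherwise the projection $\power(X_1)\times Z\twoheadrightarrow\power(X_1)$, together with the (easy) monotonicity of $Z$-closure under surjections, shows that the hypothesis that $U_2$ is $\power(X_1)\times Z$-closed implies in particular that $U_2$ is $\power(X_1)$-closed. Hence Lemma~\ref{Lem:TensorProd2Prod} applies and yields a dense embedding $i:\po(U_1)\times\po(U_2)\to\po(U_1\ltimes U_2)$. Transporting $G$ across $i$ produces a $\po(U_1)\times\po(U_2)$-generic filter $H$ with $V[H]=V[G]$, and by the product forcing theorem $H=H_1\times H_2$ with $H_1$ generic for $\po(U_1)$ over $V$ and $H_2$ generic for $\po(U_2)$ over $V[H_1]$. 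Writing $\vec{x}=\vec{x}^1\ptimes\vec{x}^2$ for the Prikry generic sequence attached to $G$, a short unwinding of the definitions of $i$ and of a simple left lift shows that $(s_1,T_1)\in H_1$ iff $G$ contains the simple left lift $\bigl(s_1\ptimes(\vec{x}^2\uhr len(s_1)),\,T_1\ptimes X_2^{<\omega}\bigr)$ of $(s_1,T_1)$, iff $s_1=\vec{x}^1\uhr len(s_1)$ and $\vec{x}^1\setminus len(s_1)\in[T_1]$, which by Theorem~\ref{THM:GenericLifts} is precisely the condition $(s_1,T_1)\in G_1$. So $H_1=G_1$ and $V[G]=V[G_1][H_2]$ with $H_2$ generic for $\po(U_2)$ over $V[G_1]$.

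It then remains to apply Corollary~\ref{COR:NoNewSmallSets} with $\qo=\po(U_1)$, $Y=Z$, $U=U_2$. Transferring the given $\po(U_1\ltimes U_2)$-name $\tau$ for a subset of $Z$ along the equivalence $i$ to a $\po(U_1)\times\po(U_2)$-name, the corollary produces, below every $p\in\po(U_2)$, a direct extension $q\leq^* p$ and a $\po(U_1)$-name $\tau'$ with $(1_{\po(U_1)},q)\Vdash\tau=\tau'$; hence such $q$ are dense in $\po(U_2)$. Picking such a $q$ in the generic $H_2$ together with its witness $\tau'$, and using $(1_{\po(U_1)},q)\in H_1\times H_2=H$, we obtain $\tau^G=\tau^H=(\tau')^{H_1}=(\tau')^{G_1}\in V[G_1]$, as wanted.

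The step I expect to require genuine care — the main obstacle — is verifying the hypothesis of Corollary~\ref{COR:NoNewSmallSets}, namely that $U_2$ is $\po(U_1)\times Z$-closed rather than merely $\power(X_1)\times Z$-closed. A condition of $\po(U_1)$ is a pair $(s,T)$ with $s\in X_1^{<\omega}$ and $T\subseteq X_1^{<\omega}$, so $\po(U_1)$ injects into $X_1^{<\omega}\times\power(X_1^{<\omega})$; when $X_1$ is infinite and well-orderable — as it is in the applications, where $X_1$ is a cardinal — this set has the same cardinality as $\power(X_1)$, so, since $\po(U_1)\neq\emptyset$, there is a surjection $\power(X_1)\times Z\twoheadrightarrow\po(U_1)\times Z$, and monotonicity of $Z$-closure under surjections upgrades the hypothesis to the required $\po(U_1)\times Z$-closure. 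Everything else is a routine combination of Lemma~\ref{Lem:TensorProd2Prod}, the product forcing theorem, and the identification $G_1=G(\vec{x}^1)$ from Theorem~\ref{THM:GenericLifts}.
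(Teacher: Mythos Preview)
Your proposal is correct and follows essentially the same route as the paper: factor $\po(U_1\ltimes U_2)$ as $\po(U_1)\times\po(U_2)$ via Lemma~\ref{Lem:TensorProd2Prod}, then argue that $\po(U_2)$ adds no new $\po(U_1)$-names for subsets of $Z$ by the Prikry property and the closure hypothesis on $U_2$. The paper argues the last step directly (observing that a $\po(U_1)$-name for a subset of $Z$ is a subset of $\po(U_1)\times\{\check z:z\in Z\}$, which it asserts has cardinality $\power(X_1)\times Z$), whereas you route through Corollary~\ref{COR:NoNewSmallSets}; these are the same argument. You are more careful than the paper on two points: you explicitly check that the product factor $H_1$ coincides with the lift-induced $G_1$ of Theorem~\ref{THM:GenericLifts}, and you flag that the bound $|\po(U_1)|\leq^*|\power(X_1)|$ needs $X_1$ well-orderable (which the paper silently assumes and which holds in all applications).
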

\begin{proof}
    By Lemma \ref{Lem:TensorProd2Prod}, $\po(U_1 \ltimes U_2)$ is equivalent to the product forcing $\po(U_1) \times \po(U_2)$. It remains to verify that every $\po(U_1) \times \po(U_2)$-name $\tau$ for a subset of $Z$ is equivalent to a $\po(U_1)$-name. Having a product forcing, we can identify $\tau$ with a $\po(U_2)$-name for a $\po(U_1)$-name for a subset of $Z$. As every $\po(U_1)$-name for a subset of $Z$ is equivalent to a name contained in $\po(U_1) \times \{ \check{z} \mid z \in Z\}$, which has cardinality $\power(X_1) \times Z$,  and $U_2$ is $\power(X_1) \times Z$-closed, it follows that $\po(U_2)$ does not introduce new $\po(U_1)$-names for subsets of $Z$. 
\end{proof}

The following Corollary is an immediate consequence of the last Lemmas.
\begin{corollary}\label{COR:PrikrySplitAtStrongLimit}
    Suppose that $U_1,U_2$ are $\sigma$-complete ultrafilters and $\kappa$ is a strong limit cardinal such that $U_1 \in V_\kappa$ and $U_2$ is $\kappa$-complete.
    Then $\po(U_1\ltimes U_2)$ is equivalent to the product forcing $\po(U_1) \times \po(U_2)$, and for every $\gamma < \kappa$, every $\po(U_1 \ltimes U_2)$-name for subsets of $V_\gamma$ every $\gamma < \kappa$ is equivalent to a $\po(U_1)$-name. 
\end{corollary}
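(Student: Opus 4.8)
The plan is to reduce everything to Lemma~\ref{Lem:TensorProd2Prod} and the (unlabelled) Lemma stated immediately before this Corollary, after noting that the closure hypotheses those results require on $U_2$ hold automatically here. The key preliminary step is a rank computation: since $U_1 \in V_\kappa$ we have $X_1 = \bigcup U_1 \in V_\kappa$, and as $\kappa$ is a limit ordinal both $\power(X_1)$ and, for each $\gamma < \kappa$, the set $\power(X_1) \times V_\gamma$ lie in some $V_{\beta+1}$ with $\beta < \kappa$. Strong-limitness of $\kappa$ gives $\aleph(V_{\beta+1}) \le \Theta(V_{\beta+1}) < \kappa$, hence $\aleph(\power(X_1)) < \kappa$ and $\aleph(\power(X_1)\times V_\gamma) < \kappa$. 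Plugging these into the Lemma just before Lemma~\ref{Lem:ProdInsideTensor} (a $\kappa$-complete ultrafilter is $Y$-closed whenever $\aleph(Y) < \kappa$) shows that the $\kappa$-complete ultrafilter $U_2$ is $\power(X_1)$-closed, and also $\power(X_1) \times V_\gamma$-closed for every $\gamma < \kappa$.

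With this in hand the first assertion is exactly Lemma~\ref{Lem:TensorProd2Prod}: $U_1, U_2$ are $\sigma$-complete and $U_2$ is $\power(X_1)$-closed, so $\po(U_1 \ltimes U_2)$ and $\po(U_1) \times \po(U_2)$ are forcing equivalent. For the second, I would apply the Lemma immediately preceding this Corollary with $Z := V_\gamma$ (taking $X_1, X_2$ infinite without loss of generality, since otherwise the relevant $\po(U_i)$ is trivial and the statement is immediate): as $U_2$ is $\power(X_1) \times V_\gamma$-closed, that Lemma --- or rather its proof, which establishes the name-level statement --- gives that every $\po(U_1 \ltimes U_2)$-name for a subset of $V_\gamma$ is equivalent to a $\po(U_1)$-name, as desired.

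I do not expect a genuine obstacle: the only real content is the rank/cardinality bookkeeping that converts ``$\kappa$ strong limit and $U_1 \in V_\kappa$'' into $\aleph(\power(X_1)\times V_\gamma) < \kappa$, which is precisely what is needed to upgrade $\kappa$-completeness of $U_2$ to the $\power(X_1)$- and $\power(X_1) \times V_\gamma$-closure demanded by the two earlier Lemmas; everything past that point is a direct citation.
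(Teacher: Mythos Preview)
Your proposal is correct and matches the paper's approach: the paper simply declares the corollary ``an immediate consequence of the last Lemmas,'' and you have correctly identified those lemmas and supplied the rank/Hartogs bookkeeping (using the strong-limit hypothesis to get $\aleph(\power(X_1))<\kappa$ and $\aleph(\power(X_1)\times V_\gamma)<\kappa$, then part~(2) of the lemma preceding Lemma~\ref{Lem:ProdInsideTensor}) that the paper leaves implicit. Your observation that the name-level statement is established in the \emph{proof} of the unlabelled lemma rather than in its statement is also on point.
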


\section{Tensor Prikry Systems}\label{sec:tensorsystems}

Throughout this section we assume there is a rank Berkeley cardinal $\lambda$ in $V$, $\kappa_0 \geq \lambda$ is an almost supercompact cardinal, and $\eta > \kappa_0,\lambda$ is a strongly inaccessible cardinal. 
Since 
\[V_\eta \models \lambda \text{ is rank Berkeley}\]
By \cite{goldberg:measurablecardinals}
$V_\eta \models \text{ there is a proper class of almost supercompacts and a proper class of regular cardinals.}$
Let $\la \kappa_i \mid i < \eta\ra$ be an increasing enumeration of all almost supercompact cardinals in $V_\eta$ starting from $\kappa_0$. 
Set 
\[\Delta = \{ \delta < \eta \mid \kappa_0 \leq \delta \text{ is a regular cardinal}\}\]

Define for each $\delta \in \Delta$ $$i(\delta) = \max( \{i \in Ord \mid \delta \geq \kappa_i\} ),
$$
\[\mathcal{U}_\delta = \{ U \subseteq \power(\delta) \mid U \text{ is a } \kappa_{i(\delta)}\text{-complete uniform ultrafilter on }\delta \}\] 

By Goldberg \cite[Theorem 3.3]{goldberg:measurablecardinals}, each $\mathcal{U}_\delta$ is nonempty, and by Theorem \ref{Fact:AlmostSCWellOrderable}, the set $\bigcup_{\delta \in \Delta} \mathcal{U}_\delta$ is well-orderable. 
We therefore fix a choice function 
\[\vec{U} = \la U_\delta \mid \delta \in \Delta\ra \in \prod_{\delta \in \Delta}\mathcal{U}_\delta.\]

\begin{definition}
For a finite nonempty set $d \subseteq \Delta$, whose increasing enumeration is $d = \la \delta_n \mid n < k\ra$, define the associated ultrafilter
 $U_d$ to be the
left tensor product
\[
U_d := U_{\delta_1} \ltimes U_{\delta_2} \ltimes \dots \ltimes U_{\delta_{k-1}}.
\]
\end{definition}

\noindent

\begin{remark}\label{RMK:TensorBasics}${}$
\begin{enumerate}

    \item The notion of a lift in Definition \ref{Def:Lifts} of a condition $p$ in $\po(U_1)$ or $\po(U_2)$ to a condition $p' \in \po(U_1 \ltimes U_2)$, naturally extends to lifts from a condition $p$ in $\po(U_d)$ for a finite tensor given by 
    $d \in [\Delta]^{<\omega}$ to a condition $p'$ in a bigger tensor $\po(U_{d'})$ for $d' \supseteq d$. Theorem \ref{THM:GenericLifts} clearly extends to this setting, to show that given a generic filter $G_{d'} \subseteq \po(U_{d'})$, its induced filter
    \[
    G_d = \{ p \in \po(U_d) \mid \exists p'\in G_{d'}, p' \text{ is a lift of } p\}
    \]
    is generic for $\po(U_d)$.

\item For each $i \in Ord$, since $\kappa_i$ is a strong limit cardinal, by Lemma \ref{Lem:TensorProd2Prod} it follows that for any $d \in [\Delta]^{<\omega}$
\[U_d =  U_{d\cap \kappa_i} \times U_{d\setminus \kappa_i}.\]

Moreover, by Lemma \ref{Lem:TensorProd2Prod},
\[\po(U_d) \equiv \po(U_{d\cap \kappa_i}) \times \po(U_{d\setminus \kappa_i}).\]

And by Corollary, \ref{COR:PrikrySplitAtStrongLimit}, all bounded subsets of $V_{\kappa_i}$ added by $\po(U_d)$ belong to the intermediate generic extension by $\po(U_{d\cap \kappa_i})$.
\end{enumerate}
\end{remark}

\begin{definition}
    Define the class forcing $\po_{\fin}(\vec{U})$ by 
    setting its domain to be the disjoint union 
    \[
    \po_{\fin}(\vec{U}):= \biguplus_{d \in [\Delta]^{<\omega}} \po(U_d)
    \]

    For each $p \in \po_{\fin}(\vec{U})$ let
    $\supp(p) \in [\Delta]^{<\omega}$ be the unique so that $p \in \po(U_{\supp(p)})$.

    A condition $p' \in \po_{\fin}(\vec{U})$ extends $p$ if $\supp(p) \subseteq \supp(p')$ and $p' \in \po(U_{\supp(p')})$ is a lift of $p \in \po(U_{\supp(p)})$.
\end{definition}

\begin{lemma}\label{LEM:PfinSingularizing}
    The forcing $\po_{\fin}(\vec{U})$ adds a cofinal $\omega$ sequence $\vec{x}_\delta$ in each regular $\delta \geq \kappa_0$.
\end{lemma}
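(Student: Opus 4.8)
The plan is to show that for each regular $\delta \geq \kappa_0$ (i.e., each $\delta \in \Delta$), a generic filter for $\po_{\fin}(\vec U)$ naturally projects onto a generic filter for the single-coordinate Tree Prikry forcing $\po(U_{\{\delta\}}) = \po(U_\delta)$, and then invoke the fact that $\po(U_\delta)$ adds a cofinal $\omega$-sequence to $\delta$ because $U_\delta$ is uniform on $\delta$. Concretely, I would first fix a generic $G \subseteq \po_{\fin}(\vec U)$ and a regular cardinal $\delta \geq \kappa_0$, so $d_0 := \{\delta\} \in [\Delta]^{<\omega}$.

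The first key step is to define
\[
G_{d_0} = \{\, p \in \po(U_\delta) \mid \exists p' \in G,\ \supp(p') \ni \delta \text{ and } p' \text{ is a lift of } p\,\},
\]
and verify that $G_{d_0}$ is a generic filter for $\po(U_\delta)$. That $G_{d_0}$ is a filter follows from the lift-projection analysis: by Remark \ref{RMK:TensorBasics}(1), for any fixed $d' \ni \delta$ in $[\Delta]^{<\omega}$, the induced filter $G_{d'} := \{p \in \po(U_{d'}) \mid \exists p' \in G,\ p' \text{ a lift of } p\}$ is generic for $\po(U_{d'})$, and by (the extension of) Theorem \ref{THM:GenericLifts} to finite tensors, $G_{d'}$ further projects to a generic $G_{d_0}$ for $\po(U_\delta)$ via the coordinate projection; since the poset $\po_{\fin}(\vec U)$ is directed upward in the support $d$ (any two conditions with supports $d_1, d_2$ have a common lift with support $d_1 \cup d_2$), these induced filters $G_{d'}$ cohere, and their common coordinate-$\delta$ projection is a single well-defined generic filter $G_{d_0}$. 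For genericity one checks density: given a dense open $D \subseteq \po(U_\delta)$, the set of conditions in $\po_{\fin}(\vec U)$ whose coordinate-$\delta$ projection (after passing to a support containing $\delta$) lands in $D$ is dense in $\po_{\fin}(\vec U)$, using that lifts preserve the one-point-extension / direct-extension structure and that $U_{d'}$-large trees project to $U_\delta$-large trees under the Rudin–Keisler projection (Theorem \ref{THM:GenericLifts} and the discussion of lifts in Section \ref{Section:Tensors}).

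The second key step is the standard observation that $\po(U_\delta)$ adds a cofinal $\omega$-sequence in $\delta$: the generic object $\vec x_\delta := \bigcup\{ s^p \mid p \in G_{d_0}\} \in \delta^\omega$ is, by a density argument, unbounded in $\delta$ — for each $\beta < \delta$ the set $\{(s,T) \in \po(U_\delta) \mid \exists i < \lvert s\rvert,\ s(i) > \beta\}$ is dense, since $U_\delta$ is uniform (hence every $U_\delta$-large tree has nodes with successor sets unbounded in $\delta$, so one can always one-point-extend past $\beta$). Then $\vec x_\delta$ is the desired cofinal $\omega$-sequence, and it is added by $\po_{\fin}(\vec U)$ since it is computable from $G_{d_0}$, which is computable from $G$.

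The main obstacle is bookkeeping around the class-sized, directed nature of $\po_{\fin}(\vec U)$: one must be careful that the family of induced generics $\{G_{d'} : \delta \in d' \in [\Delta]^{<\omega}\}$ is genuinely coherent (so that the projection to coordinate $\delta$ is independent of which $d'$ one uses) and that density in the class forcing really does push down to density for $\po(U_\delta)$ — this is where the upward-directedness of supports and the explicit lift construction from Section \ref{Section:Tensors} do the work. I expect this to be routine given Theorem \ref{THM:GenericLifts} and Remark \ref{RMK:TensorBasics}, but it is the only part requiring genuine care; the uniformity-gives-unboundedness step is entirely standard.
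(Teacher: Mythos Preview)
Your proposal is correct and follows essentially the same approach as the paper: project the $\po_{\fin}(\vec U)$-generic to a $\po(U_\delta)$-generic via the lift machinery and then use uniformity of $U_\delta$ to get cofinality. The paper's own proof is much terser---it simply notes that the set of conditions with $\delta$ in their support is dense open and then asserts the projection yields a Prikry-generic sequence---so your version is a more carefully spelled-out rendition of the same argument, with the coherence-of-induced-filters bookkeeping made explicit rather than left implicit.
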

\begin{proof}
It is clear that for each $\delta \in \Delta$, the set of conditions $p \in \po(\vec{U})$ such that $\delta \in supp(p)$ is dense open. 

Therefore, 
a $V$-generic filter $G \subseteq  \po(\vec{U})$ adds a Prikry generic sequence $\vec{x}^G_\delta = \la x^G_{\delta}(n) \mid n < \omega\ra \subseteq \delta$ for the Prikry forcing $\po(U_\delta)$. Since $U_\delta$ is uniform ultrafilter on $\delta$, the $\omega$-sequence $\vec{x}^G_\delta$ is cofinal in $\delta$.
\end{proof}

Next, we will need the fact that the local factoring to products of $\po(U_{w})$ pointed out in Remark \ref{RMK:TensorBasics} extends to initial segments of $\po_{\fin}(\vec{U})$. 
\begin{lemma}\label{LEM:System2Product}
    Suppose that $\rho \leq \kappa$ where  $\kappa = \kappa_i$ is an almost supercompact cardinal, and 
    $d\subseteq \Delta \setminus \kappa$ is a finite set. 
    Then the forcing 
    $\po_{\fin}(\vec{U}\uhr (\rho \cup d))$
    is equivalent to the product
    \[\po_{\fin}(\vec{U}\uhr \rho) \times \po_{\fin}(\vec{U}\uhr d)\]
\end{lemma}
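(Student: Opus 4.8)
The statement to prove is Lemma~\ref{LEM:System2Product}: if $\rho \le \kappa = \kappa_i$ is almost supercompact and $d \subseteq \Delta \setminus \kappa$ is finite, then $\po_{\fin}(\vec U \uhr (\rho \cup d))$ is forcing-equivalent to $\po_{\fin}(\vec U \uhr \rho) \times \po_{\fin}(\vec U \uhr d)$. The plan is to exhibit a dense embedding between the two posets. Every condition $p$ in $\po_{\fin}(\vec U \uhr(\rho\cup d))$ lives in some $\po(U_w)$ for a finite $w \subseteq \rho \cup d$; write $w = w_0 \cup w_1$ with $w_0 = w \cap \rho$ and $w_1 = w \cap d = w \setminus \kappa$. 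First I would apply the ``splitting at a strong limit'' machinery: since $\kappa$ is a strong limit cardinal, $U_{w_0} \in V_\kappa$ and $U_{w_1}$ is $\kappa$-complete (it is a finite tensor of $\kappa_{i(\delta)}$-complete ultrafilters with $\delta \ge \kappa$, hence $\kappa$-complete, using Remark~\ref{Remark:ProductBasics}(2)), so Corollary~\ref{COR:PrikrySplitAtStrongLimit} gives $\po(U_w) \equiv \po(U_{w_0}) \times \po(U_{w_1})$ via the concrete dense embedding $i_w$ from Lemma~\ref{Lem:TensorProd2Prod} (pairing sequences and trees of equal length). This is exactly the content already recorded in Remark~\ref{RMK:TensorBasics}(2).

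The second step is to check that these maps $i_w$ are coherent with the lift ordering, so that they glue to a single map $\Phi : \po_{\fin}(\vec U\uhr(\rho\cup d)) \to \po_{\fin}(\vec U\uhr\rho) \times \po_{\fin}(\vec U\uhr d)$, sending $p \in \po(U_w)$ to the pair $(p_0, p_1) \in \po(U_{w_0}) \times \po(U_{w_1})$ obtained by inverting $i_w$ (restricting to the dense set where $i_w$ is defined). Concretely, if $p$ has stem $s = s_0 \ptimes s_1$ and tree $T$, I need to extract $U_{w_0}$-large $T_0$ and $U_{w_1}$-large $T_1$ with $T_0 \ptimes T_1 \subseteq T$; this is precisely what the proof of Lemma~\ref{Lem:TensorProd2Prod} produces, and it is definable, so there is no choice issue in doing it uniformly over all $p$. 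The key coherence point: if $p' \in \po(U_{w'})$ extends $p \in \po(U_w)$ in $\po_{\fin}$ (so $w \subseteq w'$ and $p'$ is a lift of $p$), then the decomposition of $p'$ over $(w'_0, w'_1)$ refines that of $p$ over $(w_0,w_1)$ componentwise — this follows because a lift in the tensor forcing is, on each coordinate block, itself a lift, and the pairing operation $\ptimes$ commutes with restriction to coordinate-subsets. So $\Phi$ is order-preserving; injectivity is clear since $\ptimes$ is injective on equal-length sequences and trees.

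The third step is density of the range of $\Phi$. A condition on the product side is a pair $(q_0, q_1)$ with $q_0 \in \po(U_{w_0})$, $q_1 \in \po(U_{w_1})$ for finite $w_0 \subseteq \rho$, $w_1 \subseteq d$; after extending $q_0, q_1$ to have stems of equal length (possible by one-point extensions, using that the trees are $U$-large hence have branches), the pair $(q_0,q_1)$ is $\Phi$ of the condition $i_{w_0 \cup w_1}(q_0,q_1) \in \po(U_{w_0 \cup w_1})$, which is a legitimate condition of $\po_{\fin}(\vec U\uhr(\rho\cup d))$ since $w_0 \cup w_1 \subseteq \rho \cup d$. Conversely every condition of $\po_{\fin}(\vec U\uhr(\rho\cup d))$ with stems already of equal block-length is in the image, and these are dense. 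Hence $\Phi$ is a dense embedding and the two forcings are equivalent.

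\textbf{Main obstacle.} I expect the genuinely delicate point to be the coherence-with-lifts verification (step two): one must be careful that the ``definable splitting'' from Lemma~\ref{Lem:ProdInsideTensor}/Lemma~\ref{Lem:TensorProd2Prod}, which chooses $T_0, T_1$ from $T$, is genuinely canonical so that choosing it for $p$ and for every lift $p'$ of $p$ yields compatible data — i.e. that the decomposition map respects $\le^*$ and one-point extensions simultaneously. Since the paper has been careful to record definability of all these constructions (the $B_i(t)$ in Lemma~\ref{Lem:TensorProd2Prod} are explicitly ``definable from $T,t,U_1,U_2$''), this should go through, but it is the step where a naive argument using AC-style choices would fail, and so it deserves the most care. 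Everything else — the strong-limit splitting and the density/injectivity bookkeeping — is routine given Corollary~\ref{COR:PrikrySplitAtStrongLimit} and Remark~\ref{RMK:TensorBasics}.
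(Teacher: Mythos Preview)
Your approach is essentially the paper's: exhibit a dense set of conditions that split as $\ptimes$-products along the $\rho$ / $d$ divide, and read off the two factors. The paper does this in one stroke by taking $D$ to be the set of $p$ with $\supp(p)\supseteq d$ and $(s^p,T^p)=(s_1\ptimes s_2,\,T_1\ptimes T_2)$ already of product form; density of $D$ is exactly Remark~\ref{RMK:TensorBasics}(2) (i.e.\ Lemma~\ref{Lem:TensorProd2Prod}), and the map $p\mapsto\big((s_1,T_1),(s_2,T_2)\big)$ is declared an order-preserving projection onto a dense subset of the product.

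Where you diverge is in your ``main obstacle'': you worry about the canonicity of the splitting procedure $T\mapsto(T_0,T_1)$ and its coherence under lifts. This concern evaporates once you restrict to $D$ first, as the paper does. On $D$ the tree is \emph{already} a product $T_1\ptimes T_2$, and since $\ptimes$ is injective on pairs of trees the factors are uniquely determined --- there is no choice to be made and hence no canonicity issue. Order-preservation then reduces to the observation that a lift of a $\ptimes$-product condition is, coordinate-block by coordinate-block, a lift of each factor; this is what you yourself note, and the paper treats it as clear. So your plan is correct, but you can drop the definable-splitting machinery entirely and avoid the obstacle you flagged.
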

\begin{proof}
    Let $D \subseteq \po_{\fin}(\vec{U}\uhr (\rho \cup d))$ be the set of conditions $p$ such that 
    \begin{itemize}
        \item $supp(p)$ contains $d$
        \item $(s^p,T^p) \in \po(U_{\supp(p)})$ is of the form  $s^p = s_1 \ptimes s_2$ and $T^p = T_1 \ptimes T_2$ where
        $(s_1,T_1) \in \po(U_{\supp(p)\uhr \rho}) \in \po_{\fin}(\vec{\mathcal{U}}\uhr \rho)$, and 
        $(s_2,T_2) \in \po(U_d) \subseteq \po_{\fin}(\vec{U}\uhr d)$.
    \end{itemize}
    By Remark \ref{RMK:TensorBasics}, $D$ is dense. And clearly, the map $p \mapsto (p_1,p_2)$ where $p_i = (s_i,T_i)$, is an order preserving projection from $D$ onto a dense open set in  $\po_{\fin}(\vec{U}\uhr \rho) \times \po_{\fin}(\vec{U}\uhr d)$.
\end{proof}

Given a $V$-generic class $G \subseteq \po_{\fin}(\vec{U})$,
our (nearly) final model is the intermediate symmetric extension $\mathcal{N}$ of $V$. That is, roughly, the minimal ZF extension of $V$ that contains each Prikry sequence $\vec{x}^G_\delta$, $\delta \in \Delta$. There are several ways of making the definition of $\mathcal{N}$ precise. Following Apter's approach from \cite{apter:ADsingularcardinals},  we consider the sub-language
$\mathcal{L}^1$ of the forcing language
$\mathcal{L}_{\po_{\fin}(\vec{U})}$, which extends the language of set theory, and further includes 
\begin{enumerate}
    \item constants $\check{x}$ for canonical names of ground model sets $x \in V$, 
    \item A constant $\vec{x}^G_\delta$, for the standard name of a Prikry generic filter at $\delta$, for every $\delta \in \Delta$, and 
    \item A unary predicate $\dot{V}()$ for detecting ground model sets.
\end{enumerate}
We define the ordinal rank of each constant symbol to be the name rank of its associated $\po_{\fin}(\vec{U})$-name.
A term in $\mathcal{L}^1$ is a formula $\tau = \tau(x,u_1,\dots,u_k)$ (with free variables $x,u_1\dots,u_k$) in $\mathcal{L}^1$. We define the rank of $\tau$ to be the maximum of the rank of the constant symbols appearing in it. \\

\noindent
Given a generic class $G \subseteq \po_{\fin}(\vec{U})$, we define the interpretation of $\tau_G$ of $\tau$ to be the formula resulting from $\tau(x,u_1,\dots,u_k)$ by replacing each constant symbol in it with the generic interpretation by $G$ of its associated name. 
With these, we define a hierarchy $\mathcal{N}_\alpha$, $\alpha < \eta$ by 
\begin{itemize}
    \item $\mathcal{N}_0 = \emptyset$
    \item $\mathcal{N}_{\alpha+1}$ consists of all subsets $D \subseteq \mathcal{N}_\alpha$, which are definable 
    over $\mathcal{N}_\alpha$ by a formula $\varphi(x) = \tau_G(x;a_1,\dots,a_k)$, where $\tau(x,u_1,\dots,u_k)$ is an $\mathcal{L}^1$-term of rank $\leq \alpha$, and $a_1,\dots, a_k \in \mathcal{N}_\alpha$.
    \item $\mathcal{N}_\delta = \bigcup_{\alpha < \delta}\mathcal{N}_\alpha$ for a limit ordinal $\delta$.
\end{itemize}

Let $\mathcal{N} = \bigcup_{\alpha \in \eta}\mathcal{N}_\alpha$.

\begin{definition}
    Define for each $z \in \mathcal{N}$ a support $\Delta(z) \in [\Delta]^{<\omega}$ by induction on its $\mathcal{N}$-rank. 
    Assuming $z \in \mathcal{N}_{\alpha+1} \setminus \mathcal{N}_\alpha$ and supports $\Delta(a)$ were defined for each $a \in \mathcal{N}_\alpha$, define the support $\Delta(z)$ to be 
    be the lexicographically least finite set of ordinals of the form 
     \[
    \left(\bigcup_{1 \leq i \leq k} \Delta(a_i)\right) \cup \{ \delta \in \Delta \mid \text{ the constant associated with }\vec{x}^G_\delta \text{ appears in }\tau\}
    \]
    such that there is an $\mathcal{L}^1$-term $\tau(x,u_1,\dots,u_k)$ and $a_1,\dots,a_k \in \mathcal{N}_\alpha$ such that
     $\varphi(x) = \tau_G(x,a_1,\dots,a_k)$ defines $z$ in $\mathcal{N}_\alpha$.
\end{definition}

Recall from Remark \ref{RMK:Homog} that tree Prikry forcings are homogeneous. The cone isomorphisms witnessing this are formed by switching the stem of a condition and preserving the tree.

\begin{definition}\label{Def:Prikry-Cone-Isom}
	Let $d$ be a finite subset of $\Delta$. Let $\pi$ be a cone isomorphism in $\po(U_d)$. We say that a cone isomorphism $\pi$ \emph{fixes} $\delta \in \Delta$ if for all conditions $p$ in $\dom(\pi)$ with stem $s^p = \langle s^p_\alpha \mid \alpha \in d\rangle$, we have $s^p_\delta = s^{\pi(p)}_\delta$.
Informally, $\pi$ fixes $\delta$ if it doesn't change stems that correspond to the Prikry sequence associated to $\delta$.

Given a cone isomorphism $\pi_0$ on $\po(U_d)$, we can extend it to a cone isomorphism $\pi$ on $\po(U_{d\cup \{\delta\}})$ by mapping $s^p = \langle s^p_\alpha \mid \alpha \in d\cup\{\delta\}\rangle$ to $\langle s^{\pi_0(p)}_\alpha \mid \alpha \in d\rangle \fr \langle s^p_\delta\rangle$. We call $\pi$ the \emph{trivial extension} of $\pi_0$.

Iterating this process, we can extend any cone isomorphism $\pi_0$ on $\po(U_d)$ to a cone isomorphism $\pi$ on $\po_{\fin}(\vec{U})$. We say that a cone isomorphism $\pi$ is \emph{finitely supported} if it is a trivial extension of a cone isomorphism on $\po(U_d)$ for some finite $d\subseteq \Delta$.
\end{definition}

\begin{lemma}
	For all $z\in \mathcal{N}$, there is a $\po_{\fin}(\vec{U})$-name $\dot{z}$ for $z$, such that any finitely supported cone isomorphism that fixes $\Delta(z)$ will preserve $\dot{z}$. That is, $\pi_*(\dot{z}) = \dot{z}$.
\end{lemma}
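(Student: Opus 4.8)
\emph{Proof proposal.} The plan is to prove, by induction on $\alpha<\eta$, a statement stronger than the lemma, which produces simultaneously: (a) for each $z\in\mathcal N_\alpha$ a name $\name z$ with $\name z^G=z$ enjoying the stated invariance; (b) a single name $\name{\mathcal N}_\alpha$ for the level $\mathcal N_\alpha$ that is fixed by \emph{every} finitely supported cone isomorphism (not just those fixing a prescribed finite set); and (c) for each finitely supported cone isomorphism $\pi$ a map $\iota_\pi$ — namely the automorphism of $V[G]=V[\pi[G]]$ induced by $\pi$ — which restricts to a bijection of $\mathcal N_\alpha$ onto itself, satisfies $\iota_\pi(z)=z$ whenever $\pi$ fixes $\Delta(z)$, and transports chosen names by $\pi_*(\name z)=\name{\iota_\pi(z)}$. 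The name $\name{\mathcal N}_\alpha$ is simply $\{(\name z,1)\mid z\in\mathcal N_\alpha\}$, so its invariance under all finitely supported $\pi$ is immediate from (c) and $\iota_\pi$ being onto. The lemma is then the special case of (a) plus (ii) of (c).

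First I would record two elementary facts about the action on names of a finitely supported cone isomorphism $\pi$ (the trivial extension, as in Definition \ref{Def:Prikry-Cone-Isom}, of a cone isomorphism on some $\po(U_d)$, $d\in[\Delta]^{<\omega}$): $\pi_*(\can x)=\can x$ for every $x\in V$, and $\pi_*(\name{\vec x}_\delta)=\name{\vec x}_\delta$ whenever $\pi$ fixes $\delta$, where $\name{\vec x}_\delta$ is the standard name for the Prikry generic at $\delta$. The first is the usual fact that automorphisms fix canonical ground-model names; the second is immediate from the meaning of ``$\pi$ fixes $\delta$'' together with the observation that $\pi$ permutes conditions, preserves supports, and on the conditions where it acts leaves the $\delta$-th stem coordinate untouched, so it permutes the pairs defining $\name{\vec x}_\delta$. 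A corollary: if $\pi$ fixes $\delta$ then the $\delta$-th Prikry sequence itself is unchanged, $\vec x^{\pi[G]}_\delta=\vec x^G_\delta$; hence any $\mathcal L^1$-term whose Prikry constants all lie among coordinates fixed by $\pi$ is interpreted identically under $G$ and $\pi[G]$, since $\can x$ and $\dot V$ are interpreted absolutely.

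The substance is the successor step. Given $z\in\mathcal N_{\alpha+1}\setminus\mathcal N_\alpha$, I would fix the lexicographically least witnessing data from the definition of $\Delta(z)$: an $\mathcal L^1$-term $\tau(x,u_1,\dots,u_k)$ of rank $\le\alpha$ and $a_1,\dots,a_k\in\mathcal N_\alpha$ with $z=\{y\in\mathcal N_\alpha\mid\mathcal N_\alpha\models\tau_G(y,a_1,\dots,a_k)\}$ and $\Delta(z)=\bigcup_i\Delta(a_i)\cup\{\delta\mid\vec x^G_\delta\text{ occurs in }\tau\}$. Pulling the Prikry constants of $\tau$ out as fresh variables yields a Prikry-constant-free term $\tau^*$ with $\tau_G(y,\vec a)\equiv\tau^*(y,\vec a,\langle\vec x^G_\delta\rangle_{\delta\in\supp\tau})$, and I would set
\[
\name z \ =\ \bigl\{\,(\name y,p)\ \big|\ y\in\mathcal N_\alpha,\ p\force \name{\mathcal N}_\alpha\models\tau^*(\name y,\name a_1,\dots,\name a_k,\langle\name{\vec x}_\delta\rangle_{\delta\in\supp\tau})\,\bigr\}.
\]
Then $\name z^G=z$. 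If $\pi$ is finitely supported and fixes $\Delta(z)$, then $\pi_*$ fixes $\name{\mathcal N}_\alpha$ (induction hypothesis (b)), each $\name a_i$ (induction hypothesis (c), since $\Delta(a_i)\subseteq\Delta(z)$ so $\iota_\pi(a_i)=a_i$), and each $\name{\vec x}_\delta$ with $\delta\in\supp\tau\subseteq\Delta(z)$ (the computation above); applying $\pi_*$ to $\name z$, using $\pi$-equivariance of the forcing relation and reindexing the remaining $\name y$'s through the bijection $\iota_\pi\restriction\mathcal N_\alpha$, one gets $\pi_*(\name z)=\name z$. For a general finitely supported $\pi$ the same computation, now tracking how $\pi_*$ moves the $\name{\vec x}_\delta$, gives $\pi_*(\name z)=\name{\iota_\pi(z)}$, and then $\name{\mathcal N}_{\alpha+1}=\{(\name z,1)\mid z\in\mathcal N_{\alpha+1}\}$ is fixed by every finitely supported $\pi$ because $\iota_\pi$ maps $\mathcal N_{\alpha+1}$ bijectively onto itself. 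The base stage ($\mathcal N_0=\emptyset$) and limit stages (unions) are routine; at limits everything stays inside $V_\eta$ since $\eta$ is inaccessible and the name-ranks of the $\name z$'s stay bounded in terms of $\mathcal N$-ranks.

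The main obstacle is the clause of the induction asserting that $\iota_\pi$ really does restrict to a bijection of $\mathcal N_\alpha$ onto itself, i.e. that the hierarchy $\langle\mathcal N_\alpha\rangle_{\alpha<\eta}$ is preserved by finitely supported cone isomorphisms \emph{even though} such a $\pi$ genuinely alters some of the Prikry sequences it uses as constants. The key point to pin down is that every Prikry constant $\vec x^G_\delta$ that can occur in a term of rank $\le\alpha$ is itself already an element of $\mathcal N_\alpha$ (modulo a bounded number of extra levels, which is absorbed by the inaccessibility of $\eta$), so that in any definition over $\mathcal N_\alpha$ its Prikry constants may be absorbed into parameters, leaving a Prikry-constant-free defining formula that is interpreted identically under $G$ and $\pi[G]$; this forces $\mathcal N^{V[G]}_\alpha=\mathcal N^{V[\pi[G]]}_\alpha$ and lets $\iota_\pi$ act as claimed. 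Once this is established, the remaining points — that $\Delta(\iota_\pi(z))=\Delta(z)$ (a finitely supported cone isomorphism never relabels a coordinate, only modifies stems within a fixed support), the rank book-keeping, and the equivariance computations — are straightforward.
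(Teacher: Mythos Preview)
Your proposal is correct and follows essentially the same approach as the paper: induction on $\mathcal{N}$-rank, defining $\dot z$ from the witnessing $\mathcal L^1$-term $\tau$, the inductively-given names $\dot a_i$, the Prikry names $\dot{\vec x}_\delta$ for $\delta\in\Delta(z)$, and a canonical name $\dot{\mathcal N}_\alpha$, then checking that a finitely supported cone isomorphism fixing $\Delta(z)$ fixes each of these pieces and hence fixes $\dot z$. Your version is more careful than the paper's in that you isolate and explicitly argue the invariance of $\dot{\mathcal N}_\alpha$ under \emph{all} finitely supported cone isomorphisms (via your bijections $\iota_\pi$ and the absorption-of-Prikry-constants observation), a point the paper simply asserts; otherwise the two arguments coincide.
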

\begin{proof}
	Induction on the rank of $z$. We have canonical names for $\mathcal{N}_\alpha$, for each $G_\delta$ with $\delta \in \Delta(z)$, and (by induction) for each $a_n$ used to define $z$. Use this to build a name for $z$. The cone isomorphisms will preserve all of the pieces, so it will preserve $z$ as well.
\end{proof}

\begin{lemma}\label{LEM:canonical names}
	Let $z \in \mathcal{N}$. Then there is a name $\dot{z}$ for $z$ such that for all finitely supported cone isomorphisms $\pi$ fixing $\Delta(z)$, $\pi_*(\dot{z}) = \dot{z}$.
\end{lemma}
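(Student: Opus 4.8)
The plan is to prove the lemma by induction on the $\mathcal{N}$-rank of $z$, that is, on the least $\alpha$ with $z\in\mathcal{N}_{\alpha+1}$; this is the argument already sketched for the preceding lemma, carried out with enough care that the bookkeeping of supports goes through. The base case $z\in\mathcal{N}_1$ is immediate: such $z$ is a definable-over-$\emptyset$ set, so its canonical name involves none of the generic constants $\vec{x}^G_\delta$, hence is (equivalent to) a ground-model name, which every finitely supported cone isomorphism fixes, and $\Delta(z)=\emptyset$.

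For the inductive step, take $z\in\mathcal{N}_{\alpha+1}\setminus\mathcal{N}_\alpha$. By the definition of the hierarchy, $z$ is defined over $\mathcal{N}_\alpha$ by a formula $\varphi(x)=\tau_G(x;a_1,\dots,a_k)$ with $\tau(x,u_1,\dots,u_k)$ an $\mathcal{L}^1$-term of rank $\le\alpha$ and $a_1,\dots,a_k\in\mathcal{N}_\alpha$; by the choice of $\Delta(z)$ as the lexicographically least support, we may fix such a witnessing term and parameters for which $\Delta(z)=\bigcup_{i\le k}\Delta(a_i)\cup\{\delta\in\Delta\mid \vec{x}^G_\delta\text{ occurs in }\tau\}$. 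I would then assemble $\dot z$ from three kinds of ingredients: (a) a canonical name $\dot{\mathcal{N}}_\alpha$ for the level $\mathcal{N}_\alpha$; (b) the canonical names $\dot x^G_\delta$ for the Prikry sequences at those $\delta$ occurring in $\tau$; and (c) the names $\dot a_i$ furnished by the induction hypothesis. Concretely, $\dot z$ is a name for $\{x\in\mathcal{N}_\alpha\mid \mathcal{N}_\alpha\models\varphi(x)\}$ obtained by substituting these names into the syntactic shape of $\varphi$ and relativizing quantifiers to $\dot{\mathcal{N}}_\alpha$. Since $\pi_*$ commutes with this syntactic construction and with the forcing relation, $\pi_*(\dot z)$ is the name built from $\pi_*(\dot{\mathcal{N}}_\alpha)$, $\pi_*(\dot x^G_\delta)$, and $\pi_*(\dot a_i)$. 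Now if $\pi$ is a finitely supported cone isomorphism fixing $\Delta(z)$ then $\pi$ fixes each $\delta$ occurring in $\tau$ --- so, since by Definition \ref{Def:Prikry-Cone-Isom} it leaves the $\delta$-coordinate of every stem unchanged, it literally preserves $\dot x^G_\delta$ --- and $\pi$ fixes each $\Delta(a_i)$, so by induction it preserves $\dot a_i$; granting the invariance of $\dot{\mathcal{N}}_\alpha$ discussed next, all three ingredients are preserved and hence $\pi_*(\dot z)=\dot z$.

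I expect the main obstacle to be item (a): producing a \emph{single} name $\dot{\mathcal{N}}_\alpha$ for the $\alpha$-th level that is invariant under \emph{every} finitely supported cone isomorphism, not merely those fixing some prescribed support. The subtle point is that a cone isomorphism supported on $\po(U_d)$ permutes the generic sequences $\vec{x}^G_\delta$ ($\delta\in d$) among sequences that still belong to $\mathcal{N}_\alpha$ for the appropriate $\alpha$, so it moves elements of $\mathcal{N}_\alpha$ around but carries the level onto itself. Making this precise calls for a parallel induction on $\alpha$ showing simultaneously that each $\mathcal{N}_\alpha$ has a cone-isomorphism-invariant name and that $\pi_*$ respects $\mathcal{L}^1$-term rank (so that the ``rank $\le\alpha$'' clause in the hierarchy is preserved). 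Once that invariance is in place, the remaining steps are a routine unwinding of the definitions of $\mathcal{L}^1$-terms, the hierarchy $\langle\mathcal{N}_\alpha\rangle$, and the support function $\Delta$.
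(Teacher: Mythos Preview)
Your proposal is correct and follows essentially the same inductive approach as the paper: build $\dot z$ from the canonical name $\dot{\mathcal N}_\alpha$, the standard names $\dot x^G_\delta$ for $\delta$ occurring in $\tau$, and the inductively obtained names $\dot a_i$, then check each ingredient is preserved by any finitely supported cone isomorphism fixing $\Delta(z)$. If anything you are more careful than the paper about the invariance of $\dot{\mathcal N}_\alpha$ under \emph{all} finitely supported cone isomorphisms; the paper simply asserts this, while your suggested parallel induction on $\alpha$ is the right way to make it precise.
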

\begin{proof}
	We prove this by induction on the $\mathcal{N}-$rank of $z$. If $z$ is in $\mathcal{N}_0$, then $z = \varnothing$, so the emptyset is a name for $z$ satisfying the criteria.
	For each $\alpha \leq \eta$ let $\dot{\mathcal{N}}_\alpha$ be the associated $\po_{\fin}(\vec{U})$-name for $\mathcal{N}_\alpha$ given by its definition. 
	Suppose that $z \in \mathcal{N}_{\alpha+1}$. Then there is a formula $\phi(x) = \tau_G(x;a_1,\dots, a_k)$ of rank $\leq \alpha$ defining $z$, with each parameter $a_i \in \mathcal{N}_\alpha$. By definition, $\Delta(\alpha_i) \subseteq \Delta(z)$ for all $i \leq k$. So by induction, there are names $\dot{a}_i$ such that for all finitely supported cone isomorphisms $\pi$, $\pi_*(\dot{a}_i) = \dot{a}_i$.
	
	We define $\dot{z}$ to be a name for the set defined by $\tau_G(x;\dot{a}_1, \dots, \dot{a}_k)$ over $\mathcal{N}_\alpha$.
	
	Formally:
	\[\dot{z} = \{(p,\sigma) \mid p \forces \dot{\mathcal{N}_\alpha} \models \tau(\sigma, \dot{a}_1, \dots, \dot{a}_k)\}.\]
	
	Let $\pi$ be a cone isomorphism that fixes $\Delta(z)$. By induction, $\pi_*(\dot{a}_i) = \dot{a}_i$.
	
	Recall that $\tau$ is a formula in the language $\mathcal{L}^1$, which means it can refer to canonical names for ground model sets, standard names for Prikry generics $\vec{x}_\delta^G$, and a unary predicate $\dot{V}$ that detects ground model sets. By definition, $\tau$ will only refer to $\vec{x}_\delta^G$ for $\delta \in \Delta(z)$, and the interpretations of those names won't be changed by $\pi$. Similarly, the interpretation of $\dot{\mathcal{N}}_\alpha$ will also be fixed by $\pi$.
	
	Since $\pi$ fixes each $\dot{a_i}$ and won't change the truth of $\tau$, we conclude that $p \forces \dot{\mathcal{N}}_\alpha\models \tau(\sigma, \dot{a}_1, \dots, \dot{a}_k)$ if and only if $\pi(p) \forces \dot{\mathcal{N}}_\alpha\models \tau(\sigma, \dot{a}_1, \dots, \dot{a}_k)$. We conclude that $\dot{z} = \pi_*(\dot{z})$.
\end{proof}

We can use the above observations to generalize standard homogeneity arguments (e.g., as in the proof of Lemma 2.1 in \cite{apter:ADsingularcardinals}), and obtain a connection between statements about sets $z \in \mathcal{N}$ and their supports $\Delta(z)$.

\begin{lemma}\label{LEM:StandardArguments}${}$
	\begin{enumerate}
		\item For every set $z \in \mathcal{N}$, let $\dot{z}$ be the corresponding name constructed in Lemma \ref{LEM:canonical names}. For every formula $\psi(x)$ in the language of set theory, the truth value of $\psi(\dot{z})$ in $\mathcal{N}$ is determined by the restriction of $G$ to the Prikry sequences in $\Delta(z)$, $G\uhr \Delta(z) = \langle \vec{x}^G_\delta \mid \delta \in \Delta(z)\rangle$. 
		
		\item For every $X \in V$ and a subset $z \subseteq X$, $z \in \mathcal{N}$, $z \in V[\langle \vec{x}^G_\delta \mid \delta \in \Delta(z)\rangle]$
	\end{enumerate}
\end{lemma}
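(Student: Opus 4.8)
The plan is to prove (1) by induction on the $\mathcal{N}$-rank of $z$, and then derive (2) as a corollary. The key technical tool is the homogeneity of the tree Prikry forcings $\po(U_d)$ together with the fact, from Lemma \ref{LEM:canonical names}, that $\dot{z}$ is preserved by every finitely supported cone isomorphism fixing $\Delta(z)$. First I would observe the basic factoring statement: by Lemma \ref{LEM:System2Product} (and Remark \ref{RMK:TensorBasics}), for any finite $d \subseteq \Delta$ the forcing $\po_{\fin}(\vec{U})$ factors, up to equivalence, as $\po(U_d) \times \po_{\fin}(\vec{U}\uhr(\Delta \setminus d))$ where the first factor records exactly the Prikry sequences indexed by $d$. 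Applying this with $d = \Delta(z)$, a generic $G$ is coded by a pair $(G_{\Delta(z)}, G')$, and $G\uhr\Delta(z) = \langle \vec{x}^G_\delta \mid \delta \in \Delta(z)\rangle$ is computed from $G_{\Delta(z)}$.

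For step (1), suppose toward a contradiction that there are two generics $G, H$ with the same restriction to $\Delta(z)$ but with $\mathcal{N}[G] \models \psi(\dot z_G)$ and $\mathcal{N}[H] \models \neg\psi(\dot z_H)$. Because $\psi$ mentions no Prikry constants beyond those forced by $\dot z$, the truth of $\psi(\dot z)$ over $\mathcal{N}_\eta$ is decided by a condition in $\po_{\fin}(\vec{U})$. Using the factoring above, we may write such a deciding condition in the product form $(p_0, p_1)$ with $p_0 \in \po(U_{\Delta(z)})$ and $p_1$ in the complementary forcing. The homogeneity of the complementary forcing, realized through a finitely supported cone isomorphism $\pi$ that fixes $\Delta(z)$ (built from the trivial extensions described in Definition \ref{Def:Prikry-Cone-Isom}), moves $p_1$ to any other condition with the same stem structure. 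By Lemma \ref{LEM:canonical names}, $\pi_*(\dot z) = \dot z$, so $\pi$ transports the decision of $\psi(\dot z)$ intact; iterating and using density this shows the decision depends only on the $\po(U_{\Delta(z)})$-coordinate, i.e.\ only on $G\uhr\Delta(z)$. The inductive hypothesis is used precisely to know that each parameter name $\dot a_i$ entering the defining term $\tau$ has $\Delta(a_i) \subseteq \Delta(z)$ and is likewise $\pi$-invariant, so the evaluation of $\tau_G$ over $\mathcal{N}_\alpha$ is itself $\Delta(z)$-local.

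For step (2): given $z \subseteq X$ with $X \in V$, for each $a \in X$ the statement ``$\check a \in \dot z$'' is a formula $\psi_a$ in the language of set theory, so by (1) its truth in $\mathcal{N}$ is determined by $G\uhr\Delta(z)$, hence is computable inside $V[\langle \vec x^G_\delta \mid \delta \in \Delta(z)\rangle]$. Thus $z = \{ a \in X \mid \mathcal{N} \models a \in \dot z\}$ is definable in that model from the parameters $X$ and $\langle \vec x^G_\delta \mid \delta \in \Delta(z)\rangle$, which are all available there, so $z \in V[\langle \vec x^G_\delta \mid \delta \in \Delta(z)\rangle]$.

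The main obstacle I expect is making the homogeneity argument fully rigorous in the choiceless class-forcing setting: one must check that $\po_{\fin}(\vec{U})$ (a class forcing) behaves well enough that the factoring of Lemma \ref{LEM:System2Product} and the forcing theorem over the definable hierarchy $\mathcal{N}_\alpha$ genuinely give a \emph{condition} deciding $\psi(\dot z)$, and that the cone isomorphism $\pi$ — though it acts on a proper class of coordinates — restricts to a genuine automorphism of the relevant set-sized factor while fixing everything that $\dot z$ sees. The finite-support structure of both $\Delta(z)$ and the cone isomorphisms is exactly what tames this, but the bookkeeping connecting ``$\pi$ fixes $\Delta(z)$'' to ``$\pi$ does not change the interpretation of $\dot{\mathcal{N}}_\alpha$ and of each $\dot a_i$'' is the delicate part, and it leans essentially on Lemma \ref{LEM:canonical names}.
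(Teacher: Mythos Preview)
Your core idea --- use cone isomorphisms fixing $\Delta(z)$ together with Lemma~\ref{LEM:canonical names} to show that the decision about $\psi(\dot z)$ cannot depend on coordinates outside $\Delta(z)$ --- is exactly right, and your argument for part~(2) matches the paper's. There are, however, two points where your write-up diverges from the paper in ways worth flagging.

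First, the factoring you invoke is not what Lemma~\ref{LEM:System2Product} or Remark~\ref{RMK:TensorBasics} actually provide. Those results split a tensor at an almost supercompact $\kappa_i$ (into coordinates below $\kappa_i$ and coordinates above), using the $\kappa_i$-completeness of the upper ultrafilters. They do \emph{not} give a product decomposition $\po(U_{\Delta(z)}) \times \po_{\fin}(\vec U\uhr(\Delta\setminus\Delta(z)))$ for an arbitrary finite $\Delta(z)$, whose elements may be interleaved with the rest of $\Delta$. The paper sidesteps this entirely: it works directly with two conditions $p,r$ deciding $\psi(\dot z)$ oppositely, extends them to have the same support, the same $\delta$-stem-coordinates for each $\delta\in\Delta(z)$, and the same tree, and then observes that the stem-swapping cone isomorphism carrying $p$ to $r$ fixes $\Delta(z)$, hence fixes $\dot z$ by Lemma~\ref{LEM:canonical names} --- contradiction. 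No product decomposition is needed.

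Second, the induction on the $\mathcal{N}$-rank of $z$ is superfluous here. That induction has already been carried out in Lemma~\ref{LEM:canonical names}, which hands you a single name $\dot z$ invariant under all finitely supported cone isomorphisms fixing $\Delta(z)$; the present lemma then follows by a direct (non-inductive) homogeneity argument. Your remark that ``the inductive hypothesis is used precisely to know \ldots\ $\dot a_i$ is $\pi$-invariant'' is describing the proof of Lemma~\ref{LEM:canonical names}, not of this lemma.
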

\begin{proof}
	\begin{enumerate}
		\item Let $\psi(x)$ be a formula in the language of set theory, and let $z\in \mathcal{N}$. Let $\dot{z}$ be the name for $z$ given by Lemma \ref{LEM:canonical names}. 
        Let $p$ and $r$ be conditions in $\po_{\fin}(\vec{U})$ such that $p \forces {\dot{\mathcal{N}}} \models \psi(\dot{z})$, $r\forces {\dot{\mathcal{N}}} \models \lnot \psi(\dot{z})$. We claim that $p,r$ must give contradictory information about the Prikry sequences $\vec{x}_\delta^G$ for some $\delta \in \Delta(z)$.
        Suppose this is not the case. By extending $p,r$ if needed, we may assume that $\supp(p) = \supp(r) \supseteq \Delta(z)$ and that for each $\delta \in \Delta(z)$, 
        the $\delta$-th coordinate of the stem of $p$ is equal to the $\delta$-th coordinate in the stem of $r$. Moreover, by intersecting trees if needed, we can assume that they have the same trees $T^p = T^r$.
        
		
		Using the homogeneity of each coordinate of $\po_{\fin}(\vec{U})$ as described in Remark \ref{RMK:Homog}, define a cone isomorphism $\pi_0$ on $\po(U_{\supp(p)})$ between the cones above $p$ and $r$ that fixes $\Delta(z)$. 
        Let $\pi$ be the trivial extension of $\pi_0$ to $\po_{\fin}(\vec{U})$ given in Definition \ref{Def:Prikry-Cone-Isom}. Then $\pi$ is a finitely supported cone isomorphism fixing $\Delta(z)$, such that $\pi(p) = r$. By Lemma \ref{LEM:canonical names}, $\pi_*(\dot{z}) = \dot{z}$. We conclude that $r = \pi(p) \forces \dot{\mathcal{N}} \models \psi(\dot{z})$, contradicting the fact that $r\forces \dot{\mathcal{N}} \models \lnot \psi(\dot{z})$.
		
		\item Let $\dot{z}$ be the name for $z$ given in Lemma \ref{LEM:canonical names}. For each $x \in X$, by part 1 the truth value of $x\in \dot{z}$ in $\mathcal{N}$ can be determined in $V[G\uhr \Delta(z)]$. Thus $z \in V[G\uhr \Delta(z)]$.
		
	\end{enumerate}
\end{proof}

\begin{proposition}\label{Prop:ZFinN}
    $\mathcal{N} \models ZF$
\end{proposition}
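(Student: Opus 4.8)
\emph{Proof strategy.} The plan is to check the $\mathrm{ZF}$ axioms in $\mathcal{N}$ one at a time, treating $\mathcal{N}$ as a symmetric-style submodel of the generic extension $V[G]$ and leaning on the homogeneity machinery of Lemmas \ref{LEM:canonical names} and \ref{LEM:StandardArguments} together with the strong inaccessibility of $\eta$. Since $\eta$ is inaccessible, $\po_{\fin}(\vec{U})$ has size $\le\eta$, so we may regard $G$ as generic over $V$ for a genuine set forcing, and in particular $V[G]\models\mathrm{ZFC}$. First the cheap axioms: $\mathcal{N}$ is transitive, since $x\in\mathcal{N}_{\alpha+1}$ implies $x\subseteq\mathcal{N}_\alpha$ and for limit $\alpha$ every member of $\mathcal{N}_\alpha$ already appears at an earlier stage; hence Extensionality and Foundation are inherited from $V[G]$. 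A routine induction on rank gives $V_\eta\subseteq\mathcal{N}$: each $x\in V_\eta$ is defined over $\mathcal{N}_\xi$ by the $\mathcal{L}^1$-term ``$y\in\check x$'', where $\xi<\eta$ is the name-rank of $\check x$ (which stays below $\eta$ as $\eta$ is a strong limit); in particular $\omega\in\mathcal{N}$, giving Infinity, and $\mathrm{Ord}\cap\mathcal{N}=\eta$ (the reverse inclusion because $\mathrm{rk}(\mathcal{N}_\alpha)\le\alpha$). For Pairing and Union, if $a,b\in\mathcal{N}_\alpha$ then $\{a,b\}$ and $\bigcup a$ are subsets of $\mathcal{N}_\alpha$ (by transitivity) defined over $\mathcal{N}_\alpha$ by the rank-$0$ $\mathcal{L}^1$-terms $x=u_1\vee x=u_2$ and $\exists y\,(y\in u_1\wedge x\in y)$, so both lie in $\mathcal{N}_{\alpha+1}\subseteq\mathcal{N}$.

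Separation and Replacement follow from reflection and boundedness below $\eta$. The sequence $\langle\mathcal{N}_\alpha\mid\alpha<\eta\rangle$ is $\subseteq$-increasing, continuous at limits, with set-sized stages and union $\mathcal{N}$; since $\eta$ is regular, the L\'evy reflection theorem supplies, for any formula $\psi$ and any $a,\vec p\in\mathcal{N}$, an $\alpha<\eta$ with $a,\vec p\in\mathcal{N}_\alpha$ and $\mathcal{N}_\alpha\prec_\psi\mathcal{N}$; then $\{x\in a\mid\mathcal{N}\models\psi(x,\vec p)\}$ equals $\{x\in\mathcal{N}_\alpha\mid\mathcal{N}_\alpha\models x\in a\wedge\psi(x,\vec p)\}$, a rank-$0$ definition over $\mathcal{N}_\alpha$ with parameters $a,\vec p$, hence a member of $\mathcal{N}_{\alpha+1}$. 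For Collection (hence Replacement), given $a\in\mathcal{N}$ with $\mathcal{N}\models\forall x\in a\,\exists y\,\psi(x,y,\vec p)$, the map sending $x\in a$ to the least $\gamma<\eta$ admitting a witness in $\mathcal{N}_\gamma$ is a function on the set $a$ (definable in $V[G]$, where $\mathrm{ZF}$ holds), hence bounded by some $\gamma^*<\eta$ since $|a|<\eta$ and $\eta$ is regular; then $b=\mathcal{N}_{\gamma^*}\in\mathcal{N}$ collects the witnesses.

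The substantive axiom is Power Set, which is where the structure of $\vec{U}$ enters. It suffices to establish the claim that \emph{for each $\xi<\eta$ there is $\beta(\xi)<\eta$ such that every $z\in\mathcal{N}$ with $\mathrm{rk}(z)<\xi$ lies in $\mathcal{N}_{\beta(\xi)}$}: granting it, $V_\xi^{\mathcal{N}}=\{z\in\mathcal{N}_{\beta(\xi)}\mid\mathrm{rk}(z)<\xi\}\in\mathcal{N}$ and, for any $a\in\mathcal{N}$ of rank $\xi$, $\mathcal{P}(a)\cap\mathcal{N}=\{z\in\mathcal{N}_{\beta(\xi+1)}\mid z\subseteq a\}\in\mathcal{N}$ by Separation. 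The claim is proved by induction on $\xi$. Given $z\in\mathcal{N}$ of rank $\xi'<\xi$, its members lie below a common stage $<\eta$ by induction, and in fact more is true: choosing $\kappa=\kappa_j$ to be the least almost supercompact cardinal above a cardinal bounding $\aleph(\mathcal{P}(V_{\xi'}))$ (so $\kappa$ is a strong limit), one shows, using Lemma \ref{LEM:StandardArguments}(2) to reach $V[G\uhr d]$ for some finite $d\subseteq\Delta$ and then splitting $d$ at $\kappa$ and invoking Corollary \ref{COR:PrikrySplitAtStrongLimit}, Remark \ref{RMK:TensorBasics}(2), and Corollary \ref{COR:NoNewSmallSets} together with the $\kappa$-completeness of the tensor $U_{d\setminus\kappa}$, that in fact $z\in V[G\uhr(d\cap\kappa)]$. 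There are set-many finite $e\subseteq\Delta\cap\kappa$; each $\po(U_e)$ is a set forcing of size $<\kappa$, and $G\uhr e$ is recoverable (via the Mathias criterion, Theorem \ref{THM:AlternativeMathias}) from the finitely many constants $\vec{x}^G_\delta$, $\delta\in e$, all of $\mathcal{L}^1$-rank $<\kappa$; so $z$ is definable over $\mathcal{N}_\kappa$ by an $\mathcal{L}^1$-term of rank $<\kappa$ assembled from a ground-model $\po(U_e)$-name for $z$ and these constants. Hence every such $z$ appears at a fixed stage $\beta(\xi)<\eta$, as required.

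The main obstacle is precisely this last step: making rigorous that new subsets of a fixed small set arise only inside the ``bounded part'' $V[G\uhr(\Delta\cap\kappa)]$ of the extension — i.e. combining Lemma \ref{LEM:StandardArguments} with the completeness and splitting results (Corollaries \ref{COR:NoNewSmallSets}, \ref{COR:PrikrySplitAtStrongLimit} and Remark \ref{RMK:TensorBasics}(2), noting the $\kappa$-completeness enjoyed by tensors of $U_\delta$'s with $\delta\ge\kappa$) — and then running the $\mathcal{L}^1$-rank bookkeeping (and the induction on $\mathrm{rk}$, coding sets via transitive closures) that funnels all such subsets below a single level of the hierarchy. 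The remaining axioms are routine symmetric-extension verifications.
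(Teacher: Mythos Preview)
Your overall strategy matches the paper's: the axioms other than Power Set are routine symmetric-extension checks built on Lemma \ref{LEM:StandardArguments}, and Power Set is established by an induction on rank whose successor step splits the finite support of $z$ at an almost supercompact $\kappa$ and absorbs the high part via $\kappa$-completeness (Lemma \ref{LEM:System2Product} and Corollary \ref{COR:NoNewSmallSets}). One minor correction: the ground model here satisfies only $\mathrm{ZF}$ (rank Berkeley cardinals are incompatible with choice), so the assertion $V[G]\models\mathrm{ZFC}$ is unwarranted; in your Collection argument the bound ``$|a|<\eta$'' is therefore not available as stated.

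The substantive gap is in the Power-Set step. You invoke Lemma \ref{LEM:StandardArguments}(2) to place an arbitrary $z\in\mathcal{N}$ of rank $\xi'$ into $V[G\uhr d]$, but that lemma is stated only for $z\subseteq X$ with $X\in V$, and an arbitrary $z\in V_{\xi'+1}^{\mathcal N}$ need not be a subset of any ground-model set, since its elements may themselves be generic objects. Your ``coding sets via transitive closures'' does not obviously repair this in the ambient $\mathrm{ZF}$ setting, where transitive closures need not be well-orderable. The paper closes this gap by strengthening the inductive hypothesis: it proves that every $z\in V_\gamma^{\mathcal N}$ carries a \emph{name} $\dot z\in V_{\rho_\gamma}^{\po_{\fin}(\vec U\uhr\rho_\gamma)}$ with $\Delta(z)\subseteq\rho_\gamma$, for some $\rho_\gamma<\eta$. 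At the successor step each $y\in z$ then has such a bounded name $\dot y$ by induction, and one explicitly assembles $\dot z^*=\{(q,\dot y):q\Vdash\dot y\in\dot z\}$; this \emph{is} a name for a subset of the ground-model set $V_{\rho_\gamma}^{\po_{\fin}(\vec U\uhr\rho_\gamma)}$, and only now do the factoring results legitimately apply to drop the high component $\Delta(z)\setminus\kappa$. If you strengthen your inductive claim from ``lies in $\mathcal N_{\beta(\xi)}$'' to ``has a name of bounded rank and bounded support'', your argument then coincides with the paper's.
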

\begin{proof}
    The verification that $\mathcal{N}$ satisfies all axioms in ZF-PowerSet is standard, building on Lemma \ref{LEM:StandardArguments}. We turn to prove $\mathcal{N} \models PowerSet$.
    To this end, we prove by induction that for every ordinal $\gamma$ there is some $\rho_\gamma$ such that every set $z \in V_{\gamma}^{\mathcal{N}}$ has a name $\name{z} \in V_{\rho_\gamma}^{\po_{\fin}(\vec{U}\uhr \rho_\gamma)}$ (I.e., names of rank $\rho_\gamma$ in the set tensor system forcing associated with the sequence $\vec{U}\uhr \rho_\gamma$) and $\Delta(z) \subseteq \Delta \cap \rho_\gamma$.
    The case $\gamma = 0$ and limit steps of the induction are trivial. Suppose therefore that the statement holds for $\gamma$ with $\rho_\gamma$ being a witness. We wish to prove it for $\gamma+1$.
    Let $\kappa= \kappa_i$ be the minimal almost supercompact cardinal above $\rho_\gamma,\gamma$. We claim that $\rho_{\gamma+1} = \kappa$ works. 
    To see this, fix some $z \in V_{\gamma+1}^{\mathcal{N}}$.
    Let $\name{z}$ be a name for $z$, and let $p \in G$ be a condition forcing that $\name{z} \subseteq V_\gamma^{\mathcal{N}}$ and that $\Delta(\name{z}) = \check{\Delta(z)}$.
    Let 
    \[\rho^* = \max( \{\rho_\gamma\} \cup (\Delta(z) \cap \kappa)) \text{ and }
    d = \Delta(z) \setminus \kappa.\] 
    It  
    follows from the inductive assumption for $\gamma$ and $\rho_\gamma$ and Lemma \ref{LEM:StandardArguments}, that for
    every $y \in z$, there are
    \begin{enumerate}
        \item a name $\name{y} \in V^{\po(\vec{U}\uhr \rho_\gamma)}_{\rho_\gamma}$, and 
        \item condition $q \in \po(\vec{U}\uhr(\rho^* \cup d))$ such that
    $q \Vdash \name{y} \in \name{z}$.
    \end{enumerate} 
    Let $\name{z}^*$ be the set of all pairs $(q,\name{y})$ of the above form. 
    Then $p \Vdash \name{z} = \name{z}^*$.
    Now, $\name{z}^*$ is a $\po(\vec{U}\uhr(\rho^* \cup d))$-name for a subset of a set of cardinality less or equal to $V^{\po(\vec{U}\uhr \rho_\gamma)}_{\rho_\gamma} \in V_\kappa$.
    By Lemma 
    \ref{LEM:System2Product}, we can identify $\name{z}^*$ with a name by  
    $\po(\vec{U}\uhr \rho^*) \times \po(\vec{U}\uhr d)$.
    Since all the ultrafilters in $\vec{U}\uhr d$ are $\kappa$-complete, it follows from Corollary \ref{COR:NoNewSmallSets} that
    $\name{z}^*$ can be identified with 
     $\po(\vec{U}\uhr \rho^*)$-name. Since $\rho^* < \kappa$, we conclude that for every $z \in V_{\gamma+1}^{\mathcal{N}}$ has a name $\name{z} \in V^{\po(\vec{U}\uhr \kappa)}_\kappa$. And it is then clear that $\Delta(z) \subseteq \kappa$. 
     Hence $\rho_{\gamma+1} =\kappa$ has the desired properties.
\end{proof}

Being a model of ZF, we know that $\mathcal{N}$ has a proper class of cardinals. Using the results from the previous sections we have the following estimation of this class.

\begin{theorem}
    Every strong limit cardinal in $V$ above $\lambda$ remains a cardinal in $\mathcal{N}$.
\end{theorem}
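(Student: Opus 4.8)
The plan is to argue by contradiction. Suppose $\rho$ is a strong limit cardinal of $V$ with $\lambda < \rho$ that is not a cardinal in $\mathcal{N}$. Since $\mathcal{N}\models\mathrm{ZF}$ by Proposition \ref{Prop:ZFinN}, $\rho$ failing to be a cardinal in $\mathcal{N}$ yields, inside $\mathcal{N}$, an ordinal $\gamma<\rho$ together with a surjection $f:\gamma\twoheadrightarrow\rho$. Regard $f$ as a subset of the ground model set $X := \gamma\times\rho \in V$. Applying the support analysis of Lemma \ref{LEM:StandardArguments}(2) with $z = f$, we obtain $f \in V[\langle \vec{x}^G_\delta \mid \delta \in d\rangle]$, where $d := \Delta(f) \in [\Delta]^{<\omega}$ is the finite support of $f$.

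The next step is to pass from $V[\langle \vec{x}^G_\delta \mid \delta \in d\rangle]$ to an honest tree Prikry extension. By Remark \ref{RMK:TensorBasics}(1) the $V$-generic class $G$ induces a $V$-generic filter $G_d \subseteq \po(U_d)$, and by (the generalization to finite tensors of) Theorem \ref{THM:GenericLifts} each Prikry sequence $\vec{x}^G_\delta$ for $\delta\in d$ is definable inside $V[G_d]$ from the $\po(U_d)$-generic branch; hence $V[\langle \vec{x}^G_\delta \mid \delta\in d\rangle] \subseteq V[G_d]$, and so $f \in V[G_d]$. Moreover $U_d$ meets the hypotheses of Corollary \ref{COR:PrikryNoCollapseStong}: its domain $\prod_{\delta\in d}\delta$ is a finite product of ordinals, hence well-orderable; every factor $U_{\delta}$ with $\delta\in d$ is $\kappa_{i(\delta)}$-complete with $\kappa_{i(\delta)} \geq \kappa_0$, so by Remark \ref{Remark:ProductBasics}(2) the tensor $U_d$ is $\kappa_0$-complete and in particular $\sigma$-complete; and since $\kappa_0$ is almost supercompact, $U_d$ is well-founded by Theorem \ref{Fact:AlmostSCWellOrderable}(1).

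Now the conclusion is immediate: $\rho$ is a strong limit cardinal of $V$ above the rank Berkeley cardinal $\lambda$, so by Corollary \ref{COR:PrikryNoCollapseStong} it is not collapsed by the tree Prikry forcing $\po(U_d)$; in particular there is no surjection from an ordinal $\gamma<\rho$ onto $\rho$ in $V[G_d]$. This contradicts $f\in V[G_d]$. The only points that require care are the two reductions in the middle paragraph — that $f$ descends first to the finite-support extension $V[\langle \vec{x}^G_\delta\mid\delta\in d\rangle]$ and then into $V[G_d]$, and that $U_d$ is of the form to which the no-collapse corollary applies — and both are handled by the already established structural results (Lemma \ref{LEM:StandardArguments}, Remark \ref{RMK:TensorBasics}, Theorem \ref{THM:GenericLifts}, Theorem \ref{Fact:AlmostSCWellOrderable}); so I do not anticipate a genuine obstacle beyond this bookkeeping.
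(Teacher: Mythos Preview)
Your proof is correct and follows essentially the same approach as the paper's: reduce to a finite-support extension via Lemma \ref{LEM:StandardArguments}(2), identify that extension with a generic extension by the single tree Prikry forcing $\po(U_d)$, and invoke Corollary \ref{COR:PrikryNoCollapseStong}. Your write-up is more careful than the paper's about the bookkeeping (explicitly viewing $f$ as a subset of a ground model set, justifying $V[\langle \vec{x}^G_\delta\mid\delta\in d\rangle]\subseteq V[G_d]$, and verifying that $U_d$ is $\sigma$-complete and well-founded on a well-orderable set), but the underlying argument is the same.
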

\begin{proof}
    Let $\rho > \lambda$ be a strong limit cardinal in $V$, and $f: \gamma \to \rho$ be a function in $\mathcal{N}$.  We want to show that $f$ is not onto $\rho$. 
    By the second clause of Lemma \ref{LEM:StandardArguments} there is a finite set $d = \Delta(f)$ such that $f \in V[G\uhr d]$. But $G_d$ is generic for the Prikry forcing $\po(U)$ where $U = U_{w(p)}$ for any $p \in G$ with $\supp(p) = d$, and by Corollary \ref{COR:PrikryNoCollapseStong}, $\po(U)$ cannot collapse strong limit cardinals above $\lambda$. 
\end{proof}

We can now prove the main result of this section.
\begin{proof}(Theorem \ref{THM:Main4})\\
Let $\lambda$ be a rank Berkeley cardinal in $V$, $G \subseteq \po_{\fin}(\vec{U})$  be a generic class, and $\mathcal{N}$ the resulting  symmetic model. 
By Proposition \ref{Prop:ZFinN} $\mathcal{N}$ is a model of ZF, and by Lemma \ref{LEM:PfinSingularizing}, every cardinal $\delta \geq \kappa_0$ is singular in $\mathcal{N}$. Let $\mathcal{N}^*$ be the generic extension of $\mathcal{N}$ obtained by collapsing some $\rho \geq \kappa_0$ to $\omega$. Then 
\[\mathcal{N}^* \models \text{ZF + Every uncountable cardinal is singular.}
\]
\end{proof}

\section{Further Applications and Discussions}
\subsection{The first measurable cardinal can be the first strongly inaccessible}
In \cite{gitik-hayut-karagila:FirstMeasurable}, Gitik, Hayut, and Karagila established the consistency of a ZF model where the first measurable cardinal is also the first strongly inaccessible. They point out that if one weakens the goal from strongly inaccessible to weakly inaccessible, then a version of Apter's forcing from \cite{apter:ADsingularcardinals} suffices. I.e., , if Prikry sequences are added to every regular below a weakly inaccessible cardinal $\delta <\Theta$, then $\delta$ remains measurable and becomes the first weakly inaccessible in the corresponding symmetric model. \\
The constructions leading to the proof of Theorem \ref{THM:Main4} above are amenable to a similar argument, allowing us to construct a model where the first strongly inaccessible cardinal is the first measurable.

\begin{proof}(Theorem \ref{THM:Main5})
    Suppose that $\eta > \lambda$ is a regular limit of almost supercompact cardinals $\la \kappa_i \mid i < \eta\rangle$. Then $\eta$ is strongly inaccessible and the club filter on $\eta$ is $\eta$-complete (see \cite{goldberg:measurablecardinals}). In particular, an ultrafilter $U_\eta$ corresponding to atoms of the club filter on $\eta$ are  $\eta$-complete and uniform. Now, repeating the construction from the previous section let $\vec{U} = \la U_\delta \mid \kappa_0 < \delta < \eta, \text{is regular}\ra$. We set $\po_{\fin}(\vec{U})$ to be the associated tensor system, which adds Prikry sequences to all regulars $\kappa_0 \leq \delta < \eta$.
    Let $G \subseteq \po_{\fin}(\vec{U})$ be $V$-generic and $\mathcal{N} =\bigcup_{\alpha \in Ord}\mathcal{N}_\alpha$ be the associated class version of the symmetric model, and $\mathcal{N}^*$ be the extension of $\mathcal{N}$ obtained by collapsing $\kappa_0$ to $\omega$. The arguments of the proof of Theorem \ref{THM:Main4} give that all uncountable cardinals below $\eta$ are singular. Lemma \ref{LEM:StandardArguments} shows that every  subset $z$ of $\eta$ belongs to an intermediate extension 
     $V' = V[\la \vec{x}^G_\delta \mid \delta \in \Delta(z)\ra]$ which is generic with respect to a single Prikry generic by a tensor $\po(U_{\Delta(z)})$, where $\Delta(z) \in [\eta]^{<\omega}$. Since $\eta$ is strong limit and measurable and $\po(U_{\Delta(z)}) \in V_\eta$, it follows from the Levy-Solovay argument (see \cite{LevySolovay}) that $U_\eta$ generates a $\eta$-complete ultrafilter in $V'$. Hence $U_\eta$ generates a $\eta$-complete uniform ultrafilter on $\eta$ in $\mathcal{N}^*$, and is therefore the first measurable cardinal.
\end{proof}
\subsection{The Continuum Problem in models $\mathcal{N}^*$}
The construction of a model $\mathcal{N}^*$ of ZF + every uncountable cardinal is singular, allows some flexibility in choosing the cardinal $\rho$ to become $\omega_1$; any choice of a cardinal $\kappa_0 \leq \rho < \eta$ could produce a version of the model $\mathcal{N}^*$ where every uncountable cardinal is singular.
One way the different resulting models could be different is in the properties of their continuum.

\begin{question}
 What can be said about the continuum in extensions of the form $\mathcal{N}^*$ of the previous section?  
\end{question}

Two factors that come into play when addressing this question are the status of the generalized continuum problem in the ground model $V$, and the new bounded subsets of $\rho$ which are added by the forcing. Regarding the former, 
Goldberg (\cite{goldberg:choicelesscardinals}) studied the influence of choiceless large cardinals on the continuum problem. In particular, he proved the following:
\begin{theorem}\cite[Theorem 8.3]{goldberg:choicelesscardinals}
    If $\lambda$ is Rank Berkeley, $\ka \geq \lambda$ is rank-reflecting, and $\epsilon \geq \ka$ is an even ordinal, then there is a surjection from $\mathcal{P}(\theta_\epsilon)$ onto $\theta_{\epsilon+1}.$
\end{theorem}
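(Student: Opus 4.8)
The statement is equivalent to $\theta_{\epsilon+1} < \Theta(\mathcal{P}(\theta_\epsilon))$, so it suffices to produce one ordinal $\geq \theta_{\epsilon+1}$ that is a surjective image of $\mathcal{P}(\theta_\epsilon)$, and I would do this with a wellfounded ultrapower. First fix a wellfounded ultrafilter $U$ with domain $\theta_\epsilon$: since $\epsilon\geq\kappa$ is even, $\theta_\epsilon$ is a genuine closure point of the $\theta$-hierarchy above the rank-reflecting cardinal $\kappa$ — in particular it is regular and $\theta_{\epsilon+1}$ is its immediate successor in the hierarchy — so Goldberg's Theorem \ref{THM:UFonregulars} supplies a $\kappa$-complete uniform ultrafilter $U$ on $\theta_\epsilon$, which is wellfounded by Theorem \ref{Fact:AlmostSCWellOrderable}. (If one prefers to keep $\theta_\epsilon$ arbitrary, Theorem \ref{THM:Main1} of this paper lets one use a wellfounded ultrafilter on any well-ordered set and still conclude that $j_U(\theta_\epsilon)$ is a surjective image of $\mathcal{P}(\theta_\epsilon)$, since $j_U(\theta_\epsilon)$ is a surjective image of ${}^{\theta_\epsilon}\theta_\epsilon$ and the latter injects into $\mathcal{P}(\theta_\epsilon\times\theta_\epsilon)\cong\mathcal{P}(\theta_\epsilon)$.) Now decode: fix a pairing bijection $\theta_\epsilon\times\theta_\epsilon\to\theta_\epsilon$, let $f_B:\theta_\epsilon\to\theta_\epsilon$ be the function whose graph is the part of $B\subseteq\theta_\epsilon$ coding a function (the empty function if $B$ codes none), and set $\Psi(B)$ to be the ordinal rank of $[f_B]_U$ in the wellorder $(\theta_\epsilon^{\theta_\epsilon}/U,\in_U)$. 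Since every $=_U$-class of functions $\theta_\epsilon\to\theta_\epsilon$ has a representative, $\Psi$ is a surjection from $\mathcal{P}(\theta_\epsilon)$ onto $j_U(\theta_\epsilon)= otp(\theta_\epsilon^{\theta_\epsilon}/U)$.

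It therefore remains to see $j_U(\theta_\epsilon)\geq\theta_{\epsilon+1}$; restricting $\Psi$ to $\Psi^{-1}(\theta_{\epsilon+1})$ then yields the desired surjection onto $\theta_{\epsilon+1}$. Because $U$ is nonprincipal and uniform, $j_U(\theta_\epsilon)>\theta_\epsilon$. The key point is that $j_U(\theta_\epsilon)$ is again a closure point of the $\theta$-hierarchy: by elementarity of $j_U:V_\eta\to M=\mathrm{Ult}(V_\eta,U)$ it is such a closure point in $M$, and since $U$ has domain $\theta_\epsilon$ the model $M$ agrees with $V_\eta$ about subsets of $\theta_\epsilon$ and — this is where evenness of $\epsilon$ enters, via the correctness/localization behind Proposition 7.7 of \cite{goldberg:choicelesscardinals} — about the recursive definition of the $\theta$-hierarchy through that point; hence $j_U(\theta_\epsilon)$ is a closure point of $V_\eta$'s hierarchy lying strictly above $\theta_\epsilon$, so $\theta_{\epsilon+1}\leq j_U(\theta_\epsilon)$ as $\theta_{\epsilon+1}$ is the least such.

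The main obstacle is exactly this reflection step: transferring ``$j_U(\theta_\epsilon)$ is a closure point of $M$'' down to $V_\eta$. This is genuinely where the large-cardinal hypotheses do work — one needs that the ultrapower $M$ is wellfounded and sufficiently correct (it is, by Goldberg's wellfoundedness theorem for $\kappa$-complete ultrafilters and by the evenness of $\epsilon$), and one invokes rank-Berkeley embeddings of critical point below $\lambda$ fixing $\theta_\epsilon,\theta_{\epsilon+1},\kappa$ (available by Remark \ref{RMK:Fixing-p}) to control the definability of the hierarchy through $\theta_{\epsilon+1}$, with the rank-reflecting hypothesis on $\kappa$ guaranteeing these reflecting embeddings sit below the relevant closure points. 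Conceptually the whole statement is the choiceless, higher analog of Solovay's observation that $\mathbb{R}$ surjects onto $\omega_1$, with $\mathbb{R}$ replaced by $\mathcal{P}(\theta_\epsilon)$ and $\omega_1$ by the next closure point $\theta_{\epsilon+1}$.
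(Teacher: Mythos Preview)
This theorem is not proved in the present paper; it is quoted without proof as \cite[Theorem 8.3]{goldberg:choicelesscardinals} in the final discussion section, purely to motivate a question about the continuum in the models $\mathcal{N}^*$. There is therefore no proof here to compare your proposal against.

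On the merits of the proposal itself: the overall strategy --- surject $\mathcal{P}(\theta_\epsilon)$ onto the ultrapower ordinal $j_U(\theta_\epsilon)$ via coding of functions, then argue $j_U(\theta_\epsilon)\geq\theta_{\epsilon+1}$ --- is plausible in outline, but the crucial reflection step is not actually carried out. You correctly identify that transferring ``$j_U(\theta_\epsilon)$ is a closure point of the $\theta$-hierarchy in $M$'' to the same statement in $V$ is where the real work lies, and then you gesture at Proposition 7.7 of \cite{goldberg:choicelesscardinals}, at rank-Berkeley embeddings fixing parameters, and at the rank-reflecting hypothesis, without explaining how these ingredients combine to give the needed correctness. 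Two specific problems: first, the assertion that ``$M$ agrees with $V_\eta$ about subsets of $\theta_\epsilon$'' is unjustified --- in the choiceless setting an ultrapower by an ultrafilter on $\theta_\epsilon$ does not automatically absorb $\mathcal{P}(\theta_\epsilon)$, and indeed $M$ need not even be extensional; second, the $\theta$-hierarchy above $\theta_\epsilon$ is defined in terms of full powersets at higher ranks, which $M$ has no reason to compute correctly. So as written the argument has a genuine gap exactly at the point you flag as the main obstacle, and the invocations of rank-reflection and evenness remain slogans rather than steps.
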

\begin{corollary}\cite[Corollary 8.4]{goldberg:choicelesscardinals}
   If $\epsilon$ is an even ordinal and $j:V_{\epsilon+1} \to V_{\epsilon+1}$ is an elementary embedding with critical point $\kappa$, then the set of regular cardinals in the interval $(\theta_\epsilon, \theta_{\epsilon+1})$ has cardinality less than $\ka$.
\end{corollary}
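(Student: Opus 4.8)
The plan is a counting argument driven by the surjection supplied by Theorem 8.3. Write $R$ for the set of regular cardinals $\delta$ with $\theta_\epsilon<\delta<\theta_{\epsilon+1}$, and suppose toward a contradiction that $|R|\geq\ka$. The first step is to put the relevant data inside $V_{\epsilon+1}$ and record its $j$-invariance: the ordinals $\theta_\epsilon,\theta_{\epsilon+1}$ are definable in $V_{\epsilon+1}$ (from the rank $\epsilon$ of the structure and the least rank Berkeley cardinal $\lambda$, which is itself definable), hence so is $R$, so $j$ fixes $\theta_\epsilon$, $\theta_{\epsilon+1}$, and $R$ setwise. Because the increasing enumeration of a set of ordinals is unique, applying $j$ to the enumeration $\langle\delta_i\mid i<\mathrm{otp}(R)\rangle$ of $R$ yields $j(\delta_i)=\delta_{j(i)}$, and so $j(\delta_i)=\delta_i$ for all $i<\ka$ while $j(\delta_\ka)=\delta_{j(\ka)}>\delta_\ka$ once $|R|>\ka$. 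Thus $j$ fixes a $\ka$-block of $R$ and, setting $S:=\sup_{i<\ka}\delta_i$, one has $j''S\subseteq S$ while (in the generic situation) $j(S)>S$; that is, $j$ is discontinuous strictly inside the interval $(\theta_\epsilon,\theta_{\epsilon+1})$.

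Two cases are disposed of quickly. If $\ka\geq\theta_{\epsilon+1}$ then the interval has fewer than $\ka$ ordinals in it, so there is nothing to prove. If $\theta_\epsilon\leq\ka$, then $j\uhr\mathcal{P}(\theta_\epsilon)$ is the identity (every subset of $\theta_\epsilon$ has all its members below $\crit(j)$), so choosing a definable surjection $f:\mathcal{P}(\theta_\epsilon)\twoheadrightarrow\theta_{\epsilon+1}$ as in Theorem 8.3 we get $j(f)=f$, whence $\rng(j(f))=\{\,j(f(A)):A\subseteq\theta_\epsilon\,\}=j''\theta_{\epsilon+1}$; but $j(f)$ is a surjection onto $j(\theta_{\epsilon+1})=\theta_{\epsilon+1}$ by elementarity, so $j\uhr\theta_{\epsilon+1}$ is onto, hence the identity, contradicting $\ka<\theta_{\epsilon+1}$. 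So in the remaining (substantive) case $\ka<\theta_\epsilon$, and one again works with the commuting square $f\circ(j\uhr\mathcal{P}(\theta_\epsilon))=j\circ f$ for a definable $f$: restrict to $D:=\{\,A\subseteq\theta_\epsilon:f(A)<S\,\}$, check that $D$ is closed under $j$ and is mapped onto $S$ by $f$, and then pass to the $j$-core $D_\infty:=\bigcap_{n<\omega}(j^n)''D$. The aim is to show $f$ still maps $D_\infty$ onto $S$ and that $j\uhr D_\infty$ is a bijection of $D_\infty$; then $f$ would conjugate that bijection to an order-preserving bijection of $S$, i.e.\ the identity on $S$, contradicting $\crit(j)=\ka<\theta_\epsilon<S$.

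The step I expect to be the real obstacle is exactly this last one: showing that the $j$-invariant core $D_\infty$ is still large enough to surject onto $S$ — images do not commute with the countable intersection defining $D_\infty$, so this needs a genuine argument, most plausibly exploiting the $\ka$-completeness of the normal ultrafilter $U=\{\,X\subseteq\ka:\ka\in j(X)\,\}$ derived from $j$ to push a surjection through $\bigcap_{n<\omega}(j^n)''D$. A subsidiary point that also needs care is that Theorem 8.3 can be witnessed by a $V_{\epsilon+1}$-definable surjection $f$ (so that $j(f)=f$); if one cannot arrange this directly, the fallback is a homogeneity argument guaranteeing that $j(f)$ still satisfies the commuting square relative to $f$. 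Everything else — the invariance bookkeeping, uniqueness of enumerations, and the two easy cases — is routine.
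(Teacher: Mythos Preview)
The paper under review does not prove this statement at all: it is quoted verbatim as Corollary 8.4 from Goldberg's paper \cite{goldberg:choicelesscardinals}, with no accompanying argument. There is therefore no ``paper's own proof'' to compare your attempt against.

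On the merits of your proposal as a stand-alone argument: the invariance bookkeeping and the two easy cases are fine, but the substantive case $\kappa<\theta_\epsilon$ is, by your own admission, incomplete. The passage from $D$ to the $j$-core $D_\infty=\bigcap_{n<\omega}(j^n)''D$ and the claim that $f\uhr D_\infty$ still surjects onto $S$ is not just a technicality---it is the heart of the matter, and the $\kappa$-completeness of the derived normal measure on $\kappa$ does not obviously help, since the intersection defining $D_\infty$ is over iterated \emph{images} under $j$, not over measure-one sets. Without this step you have only shown that $j$ has a discontinuity point $S$ inside $(\theta_\epsilon,\theta_{\epsilon+1})$, which is far from a contradiction.

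There is also a hypothesis-matching issue you should address before committing to this route: Theorem 8.3 assumes a rank Berkeley cardinal $\lambda$, a rank-reflecting $\kappa\geq\lambda$, and $\epsilon\geq\kappa$, none of which appear in the statement of the Corollary. Your plan leans on a \emph{definable} surjection $f$ from Theorem 8.3 so that $j(f)=f$; you need to check that in Goldberg's argument these extra hypotheses either follow from the existence of $j:V_{\epsilon+1}\to V_{\epsilon+1}$ or are not actually needed for the surjection you want.
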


Regarding the new bounded subsets added to $\rho$ by the forcing, we know that if $\kappa_i \geq \rho$ is the first almost supercompact greater or equal to $\rho$ then the forcing $\po_{\fin}(\vec{U}\setminus 
\kappa_i)$ will not add new bounded subsets to $\rho$. It remains unclear what is the effect of $\po(U_\delta)$ on $\power_{bd}(\rho)$ for a regular cardinal $\rho \leq \delta < \kappa_i$. 

\begin{question}
    How many bounded subsets of $\rho$ are added by a Prikry forcing $\po(U_\delta)$?
\end{question}

\subsection{Strong measures}

One major motivation for the results in this paper is the question of whether well-founded ultrafilters on well-orderable sets above a rank Berkeley behave more like ultrafilters derived from measurable cardinals, or more like ultrafilters derived from strongly compact or supercompact cardinals. 
Our results (e.g., Theorems \ref{THM:Main1} and \ref{THM:Main2}) place a uniform bound of the strength these ultrafilters. Since these ultrafilters are natural candidates for witnessing large cardinal strength in HOD, it is natural to ask if the existence of a rank Berkeley cardinals places a bound on the large cardinals in HOD. 

\begin{question}
    Does ZF + there is a rank Berkeley cardinal imply the nonexistence of a strongly compact cardinal in HOD?
\end{question}

Many characterizations of large cardinals that are equivalent in ZFC are no longer the same in ZF, and properties of a large cardinal in the choice setting may not follow from the existence of these large cardinals without choice.
The first rank Berkeley seems to be a tipping point in the characterization of supercompactness-like properties. As shown in \cite{goldberg:choicelesscardinals}, every almost supercompact below the first rank Berkeley is supercompact or the limit of supercompacts. Above a rank Berkeley, almost supercompactness is equivalent to rank reflection, a seemingly much weaker notion. It remains unclear whether these properties can imply the existence of certain type of strong ultrafilters.

\bibliography{bib}
\bibliographystyle{plain}

\end{document}